\documentclass[11pt]{article}
\usepackage{amsmath}
\usepackage{amsfonts}
\usepackage{amsthm}
\usepackage{graphics}
\usepackage{epsfig} 
\usepackage{amssymb}
\usepackage{amscd}
\usepackage[all]{xy}
\usepackage{latexsym}
\usepackage{graphicx}
\usepackage{multirow}
\usepackage{geometry}
\textwidth=5in
\textheight=7.5in

\newtheorem{thm}{Theorem}[section]

\newtheorem{prop}[thm]{Proposition}

\newtheorem{Def}[thm]{Definition}
\newtheorem{rem}[thm]{Remark}

\newtheorem{case}{Case}

\numberwithin{equation}{section}
\numberwithin{figure}{section}

%Definitions

\def\rchi{{\hbox{\raise1.5pt\hbox{$\chi$}}}}
\def\Aut{{\text{\rm{Aut}}}}

\def\isom{\cong}
\def\tensor{\otimes}

\def\Ker{{\text{\rm{Ker}}}}

\def\a{\alpha}
\def\b{\beta}
\def\lam{\lambda}

\def\vol{{\text{\rm{vol}}}}

\def\Sym{{\text{\rm{Sym}}}}

%NewDefinitions

\newcommand{\Mbar}{{\overline{\mathcal{M}}}}

\newcommand{\bP}{{\mathbb{P}}}
\newcommand{\bC}{{\mathbb{C}}}

\newcommand{\bL}{{\mathbb{L}}}

\newcommand{\bR}{{\mathbb{R}}}
\newcommand{\bT}{{\mathbb{T}}}
\newcommand{\bZ}{{\mathbb{Z}}}
\newcommand{\cA}{{\mathcal{A}}}

\newcommand{\cM}{{\mathcal{M}}}

\newcommand{\cL}{{\mathcal{L}}}
\newcommand{\cO}{{\mathcal{O}}}

\newcommand{\la}{{\langle}}
\newcommand{\ra}{{\rangle}}
\newcommand{\half}{{\frac{1}{2}}}
\newcommand{\bp}{{\mathbf{p}}}
\newcommand{\bx}{{\mathbf{x}}}
\newcommand{\wL}{{\widehat{L}}}
\newcommand{\wV}{{\widehat{V}}}

%Dimensions
\textwidth = 6in
\oddsidemargin = 0.25in
\evensidemargin=0.25in
\textheight = 8.7in
\topmargin = -0.2in

\title{The Kontsevich constants for the volume of the moduli of curves
  and topological recursion}

\author{ Kevin M. Chapman \\ 
  Department of Mathematics\\
  University of California\\
  Davis, CA 95616--8633, U.S.A. \\
  \texttt{kmchapman@math.ucdavis.edu}
  \and
  Motohico Mulase \\
  Department of Mathematics\\
  University of California\\
  Davis, CA 95616--8633, U.S.A. \\
  \texttt{mulase@math.ucdavis.edu}
  \and
  Brad Safnuk
Department of Mathematics\\
Central Michigan University\\
Mount Pleasant, MI 48859, U.S.A. \\
\texttt{brad.safnuk@cmich.edu}
}

\begin{document}
  \maketitle

\begin{abstract}
We give an Eynard-Orantin type
\emph{topological recursion formula}
for the canonical Euclidean volume 
of the combinatorial moduli space of pointed 
smooth algebraic curves.
The recursion comes from
the edge removal
operation on the space of ribbon graphs. As an application 
we obtain a new proof of the Kontsevich constants for the
ratio of the
Euclidean and the symplectic volumes of the moduli space
of curves.

MSC Primary: 14N35, 05C30, 53D30, 11P21; Secondary: 81T30
\end{abstract}

\allowdisplaybreaks

\tableofcontents

\section{Introduction}
\label{sect:intro}

The purpose of this paper is to identify a 
combinatorial origin of the
\emph{topological recursion formula} of Eynard and Orantin
\cite{EO1} as the operation of edge removal from a ribbon 
graph. As an application  of our formalism, we establish a
new proof of the
formula for the Kontsevich constants 
$\rho = 2^{5g-5+2n}$ of
\cite[Appendix~C]{K1992}.

In moduli theory it often happens that we have two 
different notions of the \emph{volume} of the moduli
space.
The volume may be defined by
 the push-forward measure of the 
canonical construction of the moduli space. 
Or it may be defined as the symplectic
volume with respect to  
 the intrinsic symplectic structure of the moduli space.
 An example of such situations is the moduli space of flat 
$G$-bundles on a fixed Riemann surface
 for a compact Lie group $G$  \cite{JK, JW, L, W1991b}.
 In this case, the two definitions of the volume 
 agree.

The space we study in this paper is the 
combinatorial model of moduli space 
$\cM_{g,n}$ of smooth algebraic curves of genus $g$ with
$n$ distinct marked points. It also has two 
different families of volumes parametrized by
$n$ positive real parameters.
One comes from the push-forward measure,
and the other comes from the intrinsic symplectic structure
depending on these parameters. And again these two 
notions of volume agree.

The moduli space $\cM_{g,n}$
 admits orbifold cell-decompositions
parametrized by the collection of
positive real numbers assigned to the marked points.
This orbifold is identified as the space of ribbon graphs of
a prescribed perimeter length, using the theory of 
Strebel differentials.
In his seminal paper of 1992, Kontsevich \cite{K1992}
calculated the 
symplectic volume of orbi-cells, and compared it with the
standard Euclidean volume. He found that the ratio was a constant
depending only on the genus of the curve and the number of 
marked points. This constant plays a crucial role in his
 \emph{main identity}, and hence in his
proof of the Witten conjecture.
 He wrote in Appendix~C of \cite{K1992}  that his
 proof of the evaluation of this constant
 ``presented here is \emph{not nice},
but we don't know any other proof.'' In this article we give
another proof of the formula for the Kontsevich constant,
based on 
the topological recursion for ribbon graphs.

The idea of topological recursion has been used as
an effective tool for calculating many quantities
related to the moduli space $\cM_{g,n}$ and its 
Deligne-Mumford compactification
$\Mbar_{g,n}$. The quantities
we can deal with include tautological intersection numbers
and certain Gromov-Witten invariants. 
Suppose we have a collection of quantities
 $v_{g,n}$ for $g\ge 0$ and 
$n>0$ subject to the \emph{stability} condition $2g-2+n>0$,
which guarantees the finiteness of the automorphism group
of an element of $\cM_{g,n}$.
By an Eynard-Orantin type topological recursion formula
\cite{EO1} we mean a particular
inductive formula for $v_{g,n}$ with respect to the 
\emph{complexity} $2g-2+n$ of the form
\begin{equation}
\label{eq:TR}
v_{g,n} = f_1(v_{g,n-1}) + f_2(v_{g-1, n+1})
+ \sum_{\substack{g_1+g_2=g\\n_1+n_2=n-1}} ^{\text{stable}}
f_3(v_{g_1,n_1+1},v_{g_1,n_2+1})
\end{equation}
with linear operators $f_1, f_2$ and  a bilinear operator $f_3$,
where the sum is taken for all possible partitions of $g$ and $n-1$ 
subject to the stability conditions $2g_1 -1 +n_1>0$ and
$2g_2-1+n_2>0$. We refer to Section~\ref{sect:EO} for 
more detail.

\begin{figure}[htb]
\centerline{\epsfig{file=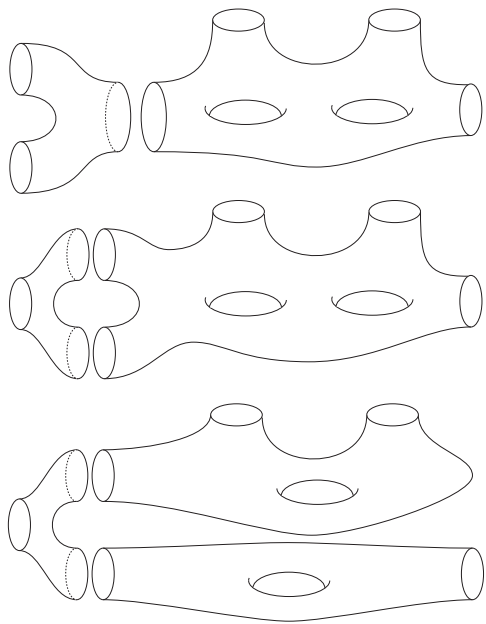, width=2.5in}}
\caption{The topological recursion. The reduction of
$2g-2+n$ by $1$ corresponds 
to cutting off of a pair of pants from 
an $n$-punctured surface.}
\label{fig:recursion}
\end{figure}

 There are many examples of
such formulas. 
\begin{enumerate}
\item The Witten-Kontsevich theory
for
the tautological cotangent class (i.e.\ the $\psi$-class) intersection numbers
\begin{equation}
\label{eq:tau}
\la \tau_{d_1}\cdots\tau_{d_n}\ra_{g,n}
=
\int_{\Mbar_{g,n}} c_1(\bL_1) ^{d_1}\cdots c_1(\bL_n) ^{d_n}
\end{equation}
on the moduli stack $\Mbar_{g,n}$
of stable algebraic curves of genus
$g$ with $n$ distinct smooth marked points. 
The Dijkgraaf-Verlinde-Verlinde formula \cite{DVV},
 which is equivalent to the
Virasoro constraint condition, is a topological recursion
of the form (\ref{eq:TR}).

\item The Mirzakhani recursion formula for the Weil-Petersson 
volume of the moduli space of bordered hyperbolic surfaces
with prescribed geodesic boundaries \cite{Mir1, Mir2}
is a topological recursion.

\item Mixed intersection numbers
$$
 \la \tau_{d_1}\cdots\tau_{d_n} \kappa_1 ^{m_1}
\kappa_2 ^{m_2} \kappa_3 ^{m_3}\cdots\ra_{g,n}
$$
of $\psi$-classes and the Mumford-Morita-Miller $\kappa$-classes
satisfy a topological recursion, first found in \cite{MS} for
the case with $\kappa_1$ and later generalized in \cite{LX}.

\item The expectation values of the product of resolvents of 
various matrix models satisfy a topological recursion 
(see for example, \cite{E2004}).
This is the origin of the work \cite{EO1}.

\item
Indeed, the first three geometric  theories 
turned out to be examples of the
general theory \cite{EO1} of
topological recursions \cite{E2007, EO2}, though 
geometric theories had been discovered
earlier than the publication of \cite{EO1}.

\item Both open and closed Gromov-Witten invariants of 
an arbitrary toric Calabi-Yau threefold are expected to satisfy a
topological recursion. This is the \emph{remodeling}
conjecture of \cite{M, BKMP}. 

\item Simple Hurwitz numbers satisfy a topological 
recursion. It was first conjectured in \cite{BM} based on 
a limit case of the remodeling conjecture, 
and was recently proved in \cite{BEMS,EMS, MZ}.

\item
The simplest case of the remodeling conjecture for
 $\bC^3$ was proved in \cite{Chen, Z1,Z2} based on the
 Laplace transform technique of \cite{EMS}.
 
 \item As shown below,
 the number of metric ribbon graphs with integer edge
 lengths for a prescribed boundary condition satisfies 
 a topological recursion.

\end{enumerate}
Our current paper provides an elementary approach to the idea
of topological recursion that uniformly 
explains the combinatorial nature
of the geometric examples (1), (2), (3), (7), (8) and (9).

The work of Harer \cite{Harer}, 
Mumford \cite{Mumford}, Strebel \cite{Strebel}, Thurston 
and others  \cite{STT}
show that there is a topological orbifold isomorphism 
\begin{equation*}
\cM_{g,n}\times \bR_+ ^n \isom RG_{g,n},
\end{equation*}
where
\begin{equation*}
 RG_{g,n} = \coprod_{\substack{\Gamma {\text{ ribbon graph}}\\
{\text{of type }} (g,n)}}\frac{\bR_+ ^{e(\Gamma)}}{\Aut(\Gamma)}
 \end{equation*}
is the space of metric ribbon graphs
of genus $g$ and $n$ boundary components, and
$e(\Gamma)$ is the number of edges of a ribbon graph $\Gamma$.
We denote by $\pi:RG_{g,n}\longrightarrow \bR_+ ^n$
the natural projection, and its fiber at $\bp\in\bR_+ ^n$
by $RG_{g,n}(\bp) = \pi^{-1}(\bp)$.
To give a combinatorial description of
 tautological intersection numbers 
 (\ref{eq:tau}) on $\Mbar_{g,n}$,
 Kontsevich \cite[Page~8]{K1992}
introduced a combinatorial symplectic form
$\omega_K(\bp)$ on $RG_{g,n}(\bp) \isom \cM_{g,n}$
and its \emph{symplectic} volume 
\begin{equation}
\label{eq:vS}
v_{g,n} ^S (\bp) = \int_{RG_{g,n}(\bp)}\exp\big( \omega_K(\bp)
\big).
\end{equation}
The definition of this symplectic form is given in
Section~\ref{sect:symp}.
At each orbi-cell level, the derivative $d\pi$ of 
the projection map $\pi$ is determined
by the edge-face incidence matrix
$$
A_\Gamma:\bR_+ ^{e(\Gamma)}\longrightarrow
\bR_+ ^n
$$
of a ribbon graph $\Gamma$. Note that we have
the standard volume forms $d\ell_1\wedge \cdots\wedge 
d\ell_{e(\Gamma)}$ on
$\bR_+ ^{e(\Gamma)}$ 
and $dp_1\wedge\cdots\wedge dp_n$ on $\bR_+ ^n$.
We can define the Euclidean volume
of the inverse image $P_\Gamma(\bp)=
A_{\Gamma}^{-1}(\bp)$ of $\bp\in\bR_+^n$ 
using the push-forward measure
by
$$
\vol(P_\Gamma (\bp))
=\left. \frac{(A_\Gamma)_*
(d\ell_1\wedge \cdots\wedge d\ell_{e(\Gamma)})}
{dp_1\wedge\cdots\wedge dp_n}\right|_\bp,
$$
where $(A_\Gamma)_*(d\ell_1\wedge \cdots\wedge 
d\ell_{e(\Gamma)})$ is
the $n$-form on $\bR_+ ^n$ obtained by integrating 
the volume form on $\bR_+ ^{e(\Gamma)}$ 
along the fiber $A_\Gamma ^{-1}(\bp)$. 
The \emph{Euclidean} volume of the moduli space is defined by
\begin{equation*}
v_{g,n}^E(\bp)=
\sum_{\substack{\Gamma {\text{ ribbon graph}}\\
{\text{of type }} (g,n)}} \frac{\vol(P_\Gamma(\mathbf{p}))}
{|\Aut(\Gamma)|}.
\end{equation*}
In Appendix~C of \cite{K1992}, Kontsevich proved the following.

\begin{thm}[\cite{K1992}]
The ratio of the symplectic volume and the Euclidean volume
of $RG_{g,n}(\bp)$ 
is a constant depending only on $g$ and $n$, and its value is
\begin{equation}
\label{eq:intro-rho}
\rho = \frac{v_{g,n} ^S (\bp)}{v_{g,n} ^E (\bp)} = 
2^{5g-5+2n}.
\end{equation}
\end{thm}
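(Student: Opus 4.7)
The plan is to prove \eqref{eq:intro-rho} by establishing Eynard--Orantin type topological recursions of the form \eqref{eq:TR}, separately for $v_{g,n}^E(\bp)$ and for $v_{g,n}^S(\bp)$, and then comparing them term by term. The recursion for $v_{g,n}^E(\bp)$ is the combinatorial heart of the paper and comes from the edge-removal operation on ribbon graphs, while the recursion for $v_{g,n}^S(\bp)$ is a Laplace-transform avatar of the Dijkgraaf--Verlinde--Verlinde recursion for $\psi$-class intersection numbers via Kontsevich's main identity.

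First, I would derive the recursion for $v_{g,n}^E(\bp)$ from edge removal. On a trivalent ribbon graph $\Gamma$ of type $(g,n)$, distinguish a boundary face (say the $n$-th) and mark an edge $e$ incident to it. Removing $e$ yields exactly one of three possibilities: (a) a ribbon graph of type $(g,n-1)$, when the faces on either side of $e$ are distinct; (b) a ribbon graph of type $(g-1,n+1)$, when $e$ bounds the same face on both sides and is non-separating; or (c) a disjoint union of ribbon graphs of types $(g_1,n_1+1)$ and $(g_2,n_2+1)$ with $g_1+g_2=g$ and $n_1+n_2=n-1$, when $e$ is separating. Summing $\vol(P_\Gamma(\bp))/|\Aut(\Gamma)|$ over all such $\Gamma$ and over all choices of the marked edge, and rewriting the push-forward measure in terms of its pull-back under edge removal, produces a three-term recursion of the form \eqref{eq:TR} for $v_{g,n}^E(\bp)$, with explicit integral kernels $f_1,f_2,f_3$ in the perimeter variable $p_n$.

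Second, I would produce the parallel recursion for $v_{g,n}^S(\bp)$. By Kontsevich's main identity, $v_{g,n}^S(\bp)$ is, up to explicit combinatorial factors, a Laplace-type transform in $\bp$ of the intersection numbers \eqref{eq:tau}, which satisfy the DVV recursion \cite{DVV}. Inverse-transforming DVV in $\bp$ yields a recursion for $v_{g,n}^S(\bp)$ of exactly the same three-term shape as the Euclidean recursion, but with each kernel $f_i$ replaced by the same kernel rescaled by a definite power of $2$ that comes from expanding the top wedge-power of $\omega_K(\bp)$ in a neighbourhood of the removed edge. With both recursions in hand, I would prove \eqref{eq:intro-rho} by induction on the complexity $2g-2+n$. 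Let $\rho_{g,n}(\bp)=v_{g,n}^S(\bp)/v_{g,n}^E(\bp)$; the base cases $(g,n)\in\{(0,3),(1,1)\}$ are verified by direct enumeration of the (finitely many) ribbon graphs of those types, giving $\rho_{0,3}=2$ and $\rho_{1,1}=4$, in agreement with $2^{5g-5+2n}$. Since $5g-5+2n$ decreases by $2$, $3$, and $3$ under the three types of reduction respectively, matching term-by-term the symplectic and Euclidean recursions and plugging in the induction hypothesis forces the symplectic kernels to be $2^2$, $2^3$, and $2^3$ times the Euclidean ones, and conversely this relation between kernels closes the induction and yields $\rho_{g,n}(\bp)=2^{5g-5+2n}$, independent of $\bp$.

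The main obstacle will be the precise bookkeeping of these powers of $2$ in the symplectic kernels relative to the Euclidean ones. Because $\omega_K(\bp)$ is a sum of contributions $d\ell_a\wedge d\ell_b$ over pairs of edges sharing a face, rather than a product of elementary edge contributions, the edge-removal step mixes distinct monomials inside $\exp(\omega_K(\bp))$, and one has to expand the relevant top wedge-power carefully in order to isolate the part that depends on the removed edge and compare it to the corresponding Euclidean kernel. Verifying that these powers of $2$ come out to exactly $2,3,3$ in the three terms --- so that the induction closes with the right universal exponent $5g-5+2n$ --- is the technical core of the argument.
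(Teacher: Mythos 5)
Your inductive skeleton is the same as the paper's (two parallel recursions, base cases $(0,3)$ and $(1,1)$, kernels differing by $2^{2}$ and $2^{3}$, and the observation that $5g-5+2n$ drops by $2,3,3$ under the three reductions -- all of that matches the actual constants in \eqref{eq:LTErecursion} and \eqref{eq:LTSrecursion}), but both pillars on which you hang the induction have gaps. On the symplectic side you propose to obtain the recursion for $v_{g,n}^S(\bp)$ from ``Kontsevich's main identity'' plus the DVV recursion. This inverts the paper's logic: the paper takes the recursion \eqref{eq:Srecursion} directly from the edge-removal/symplectic-reduction argument of \cite{BCSW} and only afterwards notes that its Laplace transform is equivalent to DVV. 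Worse, your route is in danger of circularity: Kontsevich's proof of DVV (Witten's conjecture) runs through the main identity, and the evaluation of the constant $\rho=2^{5g-5+2n}$ in Appendix~C of \cite{K1992} is exactly what converts the symplectic volume into the sum over trivalent graphs there. To use DVV without assuming the statement you are proving you would have to import an independent proof (e.g.\ Mirzakhani's \cite{Mir1,Mir2}), together with the separate, nontrivial identification \eqref{eq:intersection} of the volume of the open space $\cM_{g,n}$ with intersection numbers on $\Mbar_{g,n}$, which the paper itself flags as not obvious. Note also that the mechanism you invoke for the symplectic powers of $2$ (expanding the top wedge power of $\omega_K(\bp)$ near the removed edge) belongs to the direct \cite{BCSW}-style derivation, not to the DVV route, so your proposal is internally inconsistent about where those constants come from.

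On the Euclidean side you assert that edge removal plus ``rewriting the push-forward measure in terms of its pull-back'' yields a three-term recursion for $v_{g,n}^E(\bp)$ directly. That is precisely the step the paper identifies as the hard one and deliberately avoids: the push-forward volume of $P_\Gamma(\bp)$ is only a quasi-polynomial, the surgery does not stay within trivalent graphs and forces the degenerate loop/tadpole cases (Cases 2--4 of the proof of \eqref{eq:integralrecursion}), and the Euclidean kernel constants come from careful Jacobian bookkeeping in the perimeter variables. The paper instead proves an exact recursion for the lattice count $N_{g,n}(\bp)$ by the ciliation argument, takes the Laplace transform, and only then reads off the Euclidean recursion \eqref{eq:LTErecursion} from the leading terms; there is no pointwise relation between $N_{g,n}(\bp)$ and $v_{g,n}^E(\bp)$ (parity vanishing), which is why the transform is essential. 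Your sketch gives no argument for how the push-forward measure behaves under the surgery, and without the explicit Euclidean kernels the claimed $2^{2},2^{3},2^{3}$ comparison cannot actually be performed. Filling these two gaps would in effect reproduce either the paper's lattice-point/Laplace-transform argument or the \cite{BCSW} derivation, so as written the proposal does not yet constitute a proof.
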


\begin{rem}
The Euclidean volume of the polytope 
$$
P_\Gamma (\bp)= \{\bx\in \bR_+ ^{e(\Gamma)}
\;|\; A_\Gamma \bx = \bp\}
$$
is a \emph{quasi-polynomial}  and is difficult to calculate
in general. It is 
quite surprising that the ratio $\rho$ of the two functions is indeed
a constant. Although he says ``not nice,''  Kontsevich's 
original proof is
a beautiful application of homological algebra to the
complexes defined by the incidence matrix $A_\Gamma$.
\end{rem}

The new proof we present here uses an elementary
argument on the topological recursion of ribbon graphs
corresponding to the edge removal operation.
We show that both $v_{g,n} ^S(\bp)$ and 
$2^{5g-5+2n}\cdot v_{g,n} ^E(\bp)$ satisfy
exactly the same induction formula based on $2g-2+n$,
\emph{after taking the Laplace transform}. We then calculate 
the initial condition for the recursion formula, i.e., the cases
for $(g,n)=(0,3)$
and $(1,1)$, and see that the equality holds.
Since the topological recursion uniquely determines the
value for every $(g,n)$ subject to the stability condition
$2g-2+n>0$, we conclude that
$$
{v_{g,n} ^S (\bp)}=2^{5g-5+2n}\cdot {v_{g,n} ^E (\bp)}.
$$

Here the appearance of the Laplace transform is significant. 
The Laplace transform plays a mysterious as well as a crucial role in
each of the works \cite{E2007, EMS,EO2,K1992, MZ,OP1}.
In the light of the Eynard-Orantin recursion 
formalism \cite{EO1} and the remodeling conjecture
due to Mari\~no \cite{M} and
Bouchard-Klamm-Mari\~no-Pasquetti \cite{BKMP},
we find that the Laplace transform appearing in these 
contexts is the \emph{mirror map}.  
Usually mirror symmetry is considered as a duality, and hence
a family of Fourier-Mukai type transforms naturally appears
\cite{HT, SYZ}.
In our context, however, the nature of duality is not
apparent. 
On one side of 
the story (the A-model side) 
we have a combinatorial structure. The mirror 
symmetry transforms this combinatorial
structure into the world of
complex analysis (the B-model side). In the complex
analysis side we have such objects as
the residue calculus of \cite{EO1} and
integrable
nonlinear PDEs such as the KdV
equations \cite{K1992,LX,MS,W1991}, the KP hierarchy
\cite{Kazarian, KazarianLando, O2}, Frobenius manifold 
structures \cite{D, DZ}, the Ablowitz-Ladik hierarchy
\cite{Brini}, and more general integrable systems 
considered in \cite{FJMR, FJR1, FJR2}. The mathematical 
apparatus of the
mirror map hidden in these structures
is indeed the Laplace transform.

This paper is organized as follows.
In Section~\ref{sect:combinatorial}
we review ribbon graphs and combinatorial 
description  of the moduli space $\cM_{g,n}$
that are necessary for our investigation.
Although the definition of the Euclidean volume 
of $RG_{g,n}(\bp)$ is straightforward, it seems to be difficult to 
calculate it and there is no concrete  
formula. The approach we take in this paper is to 
appeal to the counting of \emph{lattice points}
of $RG_{g,n}(\bp)$. Thus Section~\ref{sect:lattice}
is devoted to proving an effective topological 
recursion formula for the number of lattice points
in the space of metric ribbon graphs with prescribed
perimeters. Our proof is based on counting ciliated
ribbon graphs.
Once we find the number of lattice points in 
$RG_{g,n}(\bp)$, we can obtain its volume
by taking the limit as the mesh of the lattice tends to $0$.
To compare the number of lattice points and the volume,
the simplest path is to take the Laplace transform.
Thus we are led to calculating the Laplace transform
of the topological recursion for the number of lattice points
in Section~\ref{sect:LTintegral}. 
After establishing the Laplace transform formula,
one can read off the information of the Euclidean volume 
of $RG_{g,n}(\bp)$ as the leading terms of the Laplace
transform,
by introducing the right coordinate system. 
This is carried out in Section~\ref{sect:Euc}.
The Kontsevich symplectic form is defined in 
Section~\ref{sect:symp}, and the topological 
recursion for the symplectic volume due to 
\cite{BCSW} is reviewed. With these preparations, we
give 
a new and simple proof of (\ref{eq:intro-rho}).
In Section~\ref{sect:EO} we explain the 
Eynard-Orantin formalism.  
This formalism is independent
on the context and provides the same formula. 
We then convert our recursion formulas into this
formalism, and observe how they all fit together in a 
single formula.
This is the
beauty and strength of the Eynard-Orantin formalism.

We present a full detail of the calculations of
the Laplace transform in this paper, hoping it may 
lead to a deeper understanding of the Eynard-Orantin 
theory and the mirror map. 
Appendix~\ref{app:LTProof}
is thus devoted to giving a proof of (\ref{eq:LTrecursion})
and (\ref{eq:LTSrecursion}).
These recursion formulas start with the initial values 
$(g,n)=(0,3)$ and $(g,n)=(1,1)$. The Eynard-Orantin theory
also uses the unstable case $(g,n)=(0,2)$. All these values 
are calculated in Appendix~\ref{app:examples}, together with
a few more examples.

\subsection*{Acknowledgement}
The authors thank the referee 
for important comments that improved 
the clarity of the paper. 
M.M.\ thanks Soheil Arabshahi, 
Zainal bin Abdul Aziz,
Minji Kim, and Jian Zhou for useful discussions.
He is also grateful to Michael Pankava and 
Andy Port for discussions on the
Laplace transform formulas.
During the preparation of this work,
the research of K.C.\  was supported by NSF grant DMS-0636297, 
M.M.\ received support from the American Institute 
of Mathematics, NSF, 
Universiti Teknologi Malaysia, and Tsinghua University in 
Beijing,
and the research of B.S.\ was supported 
by Central Michigan University.

\section{The combinatorial model of the moduli space}
\label{sect:combinatorial}

Let us begin with reviewing basic facts about ribbon graphs
and the combinatorial model of the moduli space 
$\cM_{g,n}$ due to Harer
\cite{Harer},
Mumford \cite{Mumford}, and Strebel 
\cite{Strebel}. We refer to \cite{MP1998} for precise 
definitions and more detailed exposition.

A \emph{ribbon graph} of topological type $(g,n)$
is the $1$-skeleton of a cell-decomposition of a closed
oriented topological surface $\Sigma$ of genus $g$
that decomposes the surface into a disjoint union of 
$v$ $0$-cells, $e$ $1$-cells, and $n$ $2$-cells. The Euler 
characteristic of the surface is given by $2-2g = v-e+n$. 
The $1$-skeleton of a cell-decomposition is a graph 
$\Gamma$ drawn on $\Sigma$, which consists
of $v$ vertices and $e$ edges. 
An edge can form a loop. We denote by
$\Sigma_\Gamma$ the cell-decomposed surface with $\Gamma$ 
its $1$-skeleton.
Alternatively, a ribbon graph can be
defined as a graph with a cyclic order
given to the incident half-edges at each vertex. By abuse of
terminology, we call the boundary of a $2$-cell of 
$\Sigma_\Gamma$ a
\emph{boundary}  of $\Gamma$, and the $2$-cell itself as
a \emph{face} of $\Gamma$.

A \emph{metric} ribbon graph is a ribbon graph with a
positive real number (the length) assigned to each edge. 
For a given ribbon graph $\Gamma$ with $e=e(\Gamma)$ 
edges, the space of metric
ribbon graphs is $\bR_+ ^{e(\Gamma)}/\Aut (\Gamma)$, 
where the
automorphism group acts through permutations of edges
(see \cite[Section~1]{MP1998}).
We restrict ourselves to the case that 
$\Aut (\Gamma)$ fixes each $2$-cell of the cell-decomposition.
If we also restrict that every vertex of a ribbon graph has degree
(i.e., valence) $3$ or more, then
using the canonical holomorphic coordinate system of 
a topological surface  \cite[Section~4]{MP1998}
 and the Strebel differentials \cite{Strebel}, we obtain
an isomorphism of topological orbifolds \cite{Harer,Mumford, STT}
\begin{equation}
\label{eq:M=RG}
{\cM}_{g,n}\times \bR_+ ^n \isom RG_{g,n}.
\end{equation}
Here
$$
RG_{g,n} = \coprod_{\substack{\Gamma {\text{ ribbon graph}}\\
{\text{of type }} (g,n)}} 
\frac{\bR_+ ^{e(\Gamma)}}{\Aut (\Gamma)}
$$
is the orbifold consisting of metric ribbon graphs of a given
topological type $(g,n)$ with degree $3$ or more. 
The degree condition is necessary to bound the number
of edges $e(\Gamma)$ for a given topological type $(g,n)$. 
If we allow degree $2$ vertices, then there are infinitely many
different ribbon graphs for every $(g,n)$. By restricting to 
 ribbon graphs 
of degree $3$ or more, we have the bound 
$e(\Gamma)\le 3(2g-2+n)$, which gives the dimension 
of each orbi-cell $\bR_+ ^{e(\Gamma)}/\Aut (\Gamma)$.
The gluing of orbi-cells
 is done by making 
the length of a non-loop edge tend to $0$. The space
 $RG_{g,n}$ is a smooth orbifold 
(see \cite[Section~3]{MP1998}, \cite{STT}). We denote by $\pi:RG_{g,n}\longrightarrow
\bR_+ ^n$ the natural projection via (\ref{eq:M=RG}), which
is the assignment of the collection of perimeter length
of each boundary to a given metric ribbon graph.

Take a ribbon graph $\Gamma$. Since $\Aut(\Gamma)$
fixes every boundary component of $\Gamma$, they can be labeled
by $N=\{1,2\dots,n\}$. For a moment let us give a label to 
each edge of $\Gamma$ from an index set $E = \{1,2,\dots,e\}$. 
The edge-face incidence matrix  is defined by
\begin{equation}
\label{eq:incidence}
\begin{aligned}
A_\Gamma &= \big[
a_{i\eta}\big]_{i\in N,\;\eta\in E};\\
a_{i\eta} &= \text{ the number of times edge $\eta$ appears in
face $i$}.
\end{aligned}
\end{equation}
Thus $a_{i\eta} = 0, 1,$ or $2$, and the sum of 
entries in each column is 
always $2$. The $\Gamma$ contribution of the space
$\pi^{-1}(p_1,\dots,p_n) = RG_{g,n}(\bp)$
 of metric ribbon graphs with 
a prescribed perimeter $\mathbf{p}=(p_1,\dots,p_n)$ is the orbifold 
polytope
$$
P_\Gamma (\mathbf{p})/\Aut(\Gamma),\qquad
P_\Gamma (\mathbf{p})= \{\mathbf{x}\in \bR_+ ^e\;|\;
A_\Gamma \mathbf{x} = \mathbf{p}\},
$$
where $\mathbf{x}=(\ell_1,\dots,\ell_e)$ is the collection of 
edge lengths of a metric ribbon graph $\Gamma$. We have
\begin{equation}
\label{eq:sump}
\sum_{i\in N} p_i= \sum_{i\in N}
\sum_{\eta\in E}a_{i\eta}\ell_\eta = 
2\sum_{\eta\in E}  
\ell_\eta.
\end{equation}

The canonical Euclidean 
volume $\vol(P_\Gamma(\mathbf{p}))$
of the polytope 
$P_\Gamma(\mathbf{p})$ is the ratio of the push-forward
measure of the Lebesgue measure on $\bR_+ ^e$ by $A_\Gamma$
and the Lebesgue measure on $\bR_+ ^n$ at the point 
$\mathbf{p}\in \bR_+ ^n$:
\begin{equation}
\label{eq:Euclideanvolume}
\vol(P_\Gamma(\mathbf{p}))
=\left. \frac{(A_\Gamma)_*
(d\ell_1\wedge \cdots\wedge d\ell_{e})}
{dp_1\wedge\cdots\wedge dp_n}\right|_\bp,
\end{equation}
where $(A_\Gamma)_*(d\ell_1\wedge \cdots\wedge 
d\ell_{e})$ is
the $n$-form on $\bR_+ ^n$ obtained by integrating 
the volume form on $\bR_+ ^{e}$ 
along the fiber $\pi^{-1}(\bp)$. This definition is equivalent to
imposing
\begin{equation}
\label{eq:volumeintegration}
\int_D \vol(P_\Gamma(\mathbf{p}))
{dp_1\wedge\cdots\wedge dp_n}
=\int_{A_\Gamma ^{-1}(D) }
d\ell_1\wedge \cdots\wedge d\ell_{e}
\end{equation}
for every open subset $D\subset \bR_+ ^n$ with compact
closure.
We define the \emph{Euclidean volume
 function} by
\begin{equation}
\label{eq:ve}
v_{g,n}^E(\bp)=
v_{g,n}^E(p_1,\dots,p_n) = 
\sum_{\substack{\Gamma {\text{ trivalent ribbon}}\\
{\text{graph of type }} (g,n)}} 
\frac{\vol(P_\Gamma(\mathbf{p}))}{|\Aut(\Gamma)|}.
\end{equation}
This is the Euclidean volume of the moduli space 
$\cM_{g,n}$ considered as the orbi-cell complex 
\begin{equation}
\label{eq:RGp}
RG_{g,n}(\bp)\overset{\text{def}}{=}
 \pi^{-1}(\bp)= 
\coprod_{\substack{\Gamma {\text{ ribbon graph}}\\
{\text{of type }} (g,n)}} \frac{P_\Gamma(\mathbf{p})}
{\Aut(\Gamma)}\isom \cM_{g,n}
\end{equation}
with the
prescribed perimeter length $\mathbf{p}\in \bR_+ ^n$.
Only degree $3$ (or trivalent)
graphs contribute to the volume function
because they parametrize the top dimensional cells.
Since $\dim_\bR RG_{g,n}(\bp) = 2(3g-3+n)$, 
we expect that the definition of
the push-forward measure and the relation
(\ref{eq:volumeintegration}) imply that the volume function
$v_{g,n}^E(\bp)$ has the polynomial growth of 
order $2(3g-3+n)$ as $\bp\rightarrow \infty$.
We will verify this growth order in Section~\ref{sect:Euc}, 
(\ref{eq:LTvolumeestimate2}).

\section{Topological recursion for the number of integral ribbon
graphs}
\label{sect:lattice}

It is a difficult task to find a topological recursion
formula for the Euclidean 
volume functions $v_{g,n}^E(\bp)$ directly from its definition. One might think that the Weil-Petersson
volume of the moduli of bordered hyperbolic 
surfaces \cite{Mir1, Mir2} would give the
Euclidean volume at the long boundary limit, but
actually the limit naturally converges to the
\emph{symplectic volume} we consider in
Section~\ref{sect:symp}. The straightforward
method for the Euclidean volume
is indeed to go through the detour of
considering the lattice point counting.
We therefore first derive a recursion formula for 
the number of metric ribbon graphs with integer 
edge lengths,  take
its Laplace transform, and then extract the topological 
recursion for the Euclidean volume functions.

Thus our main subject of this section 
is the set of all metric ribbon graphs $RG_{g,n} ^{\bZ_+}$
whose edges have
integer lengths. We call such a ribbon graph
an \emph{integral ribbon graph}.
Following \cite{N1},
let us define
 the weighted number $\big| RG_{g,n} ^{\bZ_+}(\bp)\big|$
of integral ribbon graphs with 
prescribed perimeter lengths
$\bp\in\bZ_+ ^n$:
\begin{equation}
\label{eq:Ngn}
N_{g,n}(\bp) = 
\big| RG_{g,n} ^{\bZ_+}(\bp)\big|
=\sum_{\substack{\Gamma {\text{ ribbon graph}}\\
{\text{of type }} (g,n)}}
\frac{\big|\{\bx\in \bZ_+ ^{e(\Gamma)}\;|\;A_\Gamma \bx = \bp\}
\big|}{|\Aut(\Gamma)|}.
\end{equation}
Since the finite set 
$\{\bx\in \bZ_+ ^{e(\Gamma)}\;|\;A_\Gamma \bx = \bp\}$
is a collection of lattice points in the polytope $P_\Gamma(\bp)$
with respect to the canonical integral structure $\bZ\subset
\bR$ of the real numbers, $N_{g,n}(\bp)$ can be thought of
as counting the 
number of \emph{lattice points}
in $RG_{g,n}(\bp)$ with a weight factor 
$1/|\Aut(\Gamma)|$ for each ribbon graph.
The function $N_{g,n}(\bp)$ is a symmetric function in
$\bp = (p_1,\dots,p_n)$
because the summation runs over all ribbon graphs of topological
type $(g,n)$ whose boundaries are labeled by the index set $N$.

\begin{rem}
The function (\ref{eq:Ngn}) was first considered in
\cite{N1}. Note that we do not allow the integer vector
$\bp\in\bZ_+^n$ to have any $0$ entry, since each face
of a ribbon graph must have a positive perimeter length. 
Note that $A_\Gamma\bx=0$ has no positive solutions.
Therefore,
the natural extension of the definition  (\ref{eq:Ngn})
to the case of $\bp = 0$ would
give $N_{g,n}(0)=0$. 
\end{rem}

Using the lattice point interpretation, it is easy to see
that the relation between this
function and the Euclidean volume function is the same
as that of the Riemann sum and the Riemann integral.
Let $k$ be a positive integer and $D\subset \bR_+ ^n$ an
open domain with compact closure. Then for every 
continuous function $f(\bp)$ on $D$, the definition of 
the Riemann integration in terms of Riemann sums gives
\begin{equation}
\label{eq:N=v}
\lim_{k\rightarrow \infty} \sum_{\bp\in D\cap \frac{1}{k}
\bZ_+ ^n}N_{g,n}(k\bp) f(\bp)
\frac{1}{k^{3(2g-2+n)}} = 
\int_D v_{g,n}^E(\bp) f(\bp) dp_1\cdots dp_n.
\end{equation}
This equality holds because our definition of the volume
uses the push-forward measure.
We note that as a function in $\bp$ there is no simple
direct relation between the values
$N_{g,n}(\bp)$ and $v_{g.n}^E(\bp)$. For example, 
 $N_{g,n}(\bp)=0$ if 
$\sum_{i=1} ^n p_i$ is odd because of (\ref{eq:sump}), but the
volume function is not subject to such a relation.

To derive a topological recursion for 
$N_{g,n}(\bp)$, we introduce the notion of 
\emph{ciliation}.

\begin{Def}
\label{def:ciliation}
A \emph{ciliation}
 is an assignment of a cilium in a face attached to a
bordering edge. Let $\ell\in\bZ_+$
be the length of the edge
 on which the ciliation is attached. We place the
root of the cilium at a half-integer length away from the 
vertices bounding the edge. Thus no cilium is attached to 
a vertex of a ribbon graph.
\end{Def}

\begin{figure}[htb]
\centerline{\epsfig{file=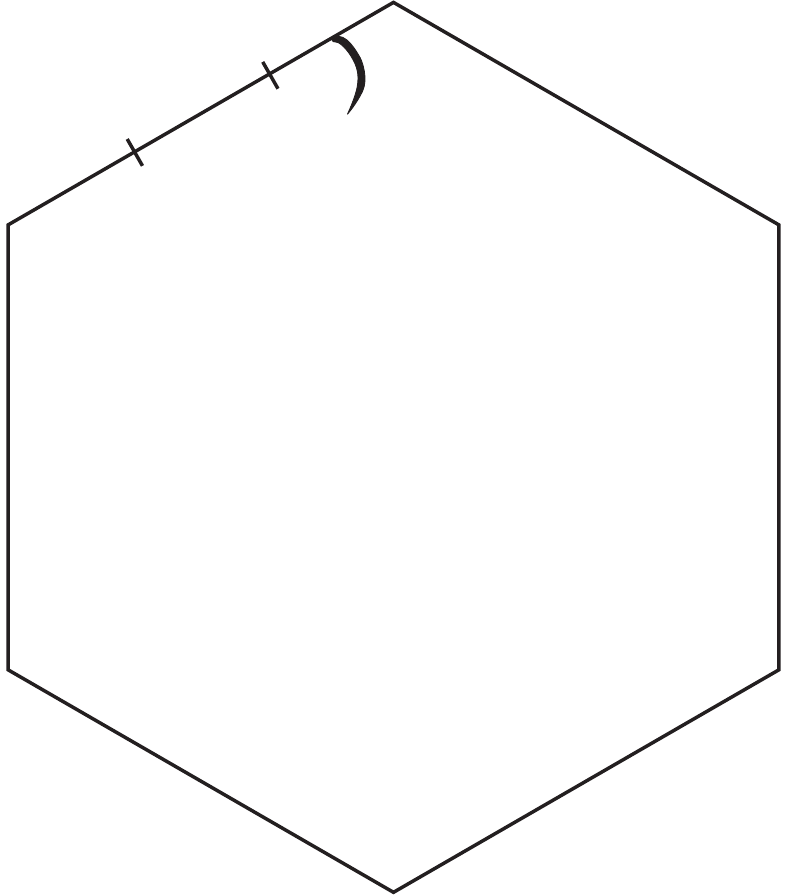, width=1in}}
\caption{A ciliation in a face. The cilium is placed
on a bordering edge, $0.5$ unit length away from
the nearest vertex.}
\label{fig:ciliation}
\end{figure}

The number of ciliations of a metric ribbon graph 
$\Gamma$ with integer edge lengths is given by
(\ref{eq:sump}). Indeed, if we count 
with respect to the edges, then there
are $2\ell$ ways for a ciliation to each edge 
because the cilium can be
placed on each side of the edge. And each face $i$ has
$p_i$ ways of ciliation. Thus the total number of
ciliations is $p_1+\cdots + p_n$.

For brevity of notation, we denote by $p_I = (p_i)_{i\in I}$
for a subset $I\in N=\{1,2\dots,n\}$. The cardinality of $I$ is
denoted by $|I|$.

\begin{thm}
\label{thm:integralrecursion}
The number of integral metric ribbon graphs
with prescribed boundary lengths satisfies the following 
topological recursion formula:
\begin{multline}
\label{eq:integralrecursion}
p_1  N_{g,n}(p_N)
=
\half
\sum_{j=2} ^n 
\Bigg[
\sum_{q=0} ^{p_1+p_j}
q(p_1+p_j-q)  N_{g,n-1}(q,p_{N\setminus\{1,j\}})
\\
+
H(p_1-p_j)\sum_{q=0} ^{p_1-p_j}
q(p_1-p_j-q)N_{g,n-1}(q,p_{N\setminus\{1,j\}})
\\
-
H(p_j-p_1)\sum_{q=0} ^{p_j-p_1}
q(p_j-p_1-q)
N_{g,n-1}(q,p_{N\setminus\{1,j\}})
\Bigg]
\\
+\half \sum_{0\le q_1+q_2\le p_1}q_1q_2(p_1-q_1-q_2)
\Bigg[
N_{g-1,n+1}(q_1,q_2,p_{N\setminus\{1\}})
\\
+\sum_{\substack{g_1+g_2=g\\
I\sqcup J=N\setminus\{1\}}} ^{\rm{stable}}
N_{g_1,|I|+1}(q_1,p_I)
N_{g_2,|J|+1}(q_2,p_J)\Bigg].
\end{multline}
Here 
$$
H(x) = \begin{cases}
1 \qquad x>0\\
0 \qquad x\le 0
\end{cases}
$$
is the Heaviside function, 
and the last sum is taken for all partitions
$g=g_1+g_2$ and $I\sqcup J=N\setminus \{1\}$
 subject to the stability condition
$2g_1-1+{I}>0$ and $2g_2-1+|J|>0$.
\end{thm}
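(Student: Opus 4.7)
The strategy is to prove (\ref{eq:integralrecursion}) by interpreting both sides as weighted counts of \emph{ciliated} integral ribbon graphs, and then establishing a bijective correspondence through the edge-removal operation at the ciliated edge. By the perimeter identity (\ref{eq:sump}), face~$1$ admits exactly $p_1$ half-integer positions for a cilium, so the LHS $p_1 N_{g,n}(\bp)$ equals the weighted count of isomorphism classes of pairs $(\Gamma,c)$ where $\Gamma$ is an integral ribbon graph of type $(g,n)$ with perimeters $\bp$ and $c$ is a cilium on face~$1$, weighted by $1/|\Aut(\Gamma,c)|$.

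For each ciliated pair, let $e^*$ be the unique edge bearing $c$; removing $e^*$ produces a strictly smaller integral ribbon graph (reducing the complexity $2g-2+n$ by~$1$) whose topological type depends on which face lies on the opposite side of $e^*$. I would split into two cases: (Case~A) the opposite face is $j\in\{2,\dots,n\}$, so faces $1$ and $j$ merge into a single face of perimeter $p_1+p_j-2\ell_{e^*}$ and the result has type $(g,n-1)$; (Case~B) the opposite face is face~$1$ itself, so removing $e^*$ splits face~$1$ into two new faces, and the resulting graph is either still connected with type $(g-1,n+1)$, or disconnects into two ribbon graphs whose genera partition $g$ and whose remaining face labels partition $N\setminus\{1\}$. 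This case analysis matches the three structural pieces on the RHS, and the stability restrictions are precisely what is needed to keep both pieces of a disconnected reduction within $RG_{g_i,|I|+1}^{\bZ_+}$.

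The combinatorial weights arise from inverting the edge-removal map. In Case~A, reconstructing $(\Gamma,c)$ from a graph $\Gamma'$ of type $(g,n-1)$ with merged-face perimeter $q$ requires choosing the edge length $\ell=(p_1+p_j-q)/2$, a pair of insertion points splitting the merged boundary walk into arcs of lengths $q_1=(p_1-p_j+q)/2$ and $q_2=(p_j-p_1+q)/2$, and a cilium position on the face-$1$ side of $e^*$; careful bookkeeping of ordered insertion points against cilium positions produces the quadratic factor $q(p_1+p_j-q)$. In Case~B, gluing a new edge $e^*$ of length $\ell=(p_1-q_1-q_2)/2$ between two marked points on the two reconstructed boundaries gives $q_1 q_2 \cdot 2\ell = q_1 q_2 (p_1 - q_1 - q_2)$ weighted configurations, and the prefactor $\tfrac{1}{2}$ absorbs the $\bZ/2$-symmetry that exchanges the two reconstructed boundaries (or the two components in the disconnected subcase).

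The main obstacle is pinning down the Heaviside correction terms in Case~A. These arise from degenerate configurations in which the naive inversion of edge removal over- or under-counts: the constraints $q_1,q_2\ge 0$ together with the requirement that the insertion points lie on the merged boundary without wrapping force correction terms supported on $q\le p_1-p_j$ and $q\le p_j-p_1$, with opposite signs coming from the two possible cyclic orientations of the merged walk relative to face~$1$ and face~$j$. Verifying the exact signs requires a careful analysis of loop-edges (where the endpoints of $e^*$ coalesce) and of the boundary case $\ell=0$. A secondary subtlety is the automorphism accounting: one must check that the $1/|\Aut(\Gamma)|$ weights on both sides correctly absorb the additional symmetries introduced by unfixing the cilium in the reconstructed pair, so that the bijection between ciliated graphs is indeed weight-preserving in each of the three structural cases.
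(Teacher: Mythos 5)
Your overall strategy coincides with the paper's: interpret $p_1 N_{g,n}(\bp)$ as a weighted count of graphs with a cilium on face $1$ (using the perimeter identity), remove the ciliated edge, and split according to whether the face on the opposite side is some $j\ge 2$ (merging into a type $(g,n-1)$ graph) or face $1$ itself (type $(g-1,n+1)$ or a stable splitting), with the reconstruction counts $q\cdot\frac{p_1+p_j-q}{2}$ and $q_1q_2(p_1-q_1-q_2)$ and the $\half$ symmetry factor exactly as in the paper's generic cases. The automorphism bookkeeping you flag is also resolved the way you suggest: since $\Aut(\Gamma)$ fixes each face, an automorphism either fixes the ciliated edge (and survives the removal) or identifies two cilium positions already conflated in the count $p_1N_{g,n}(\bp)$.

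However, there is a genuine gap precisely where you leave the argument open, and your proposed mechanism for the Heaviside terms is not the right one. They do not come from ``two cyclic orientations of the merged walk''; they come from degenerate configurations in which removing the ciliated edge alone does \emph{not} produce an admissible ribbon graph, so that an entire tadpole must be removed: (i) the opposite face $j$ is a loop consisting of the ciliated edge itself ($p_j=\ell$), whose removal leaves a dangling degree-one edge, so one deletes face $j$ together with its connecting edge, contributing $\sum_{q=0}^{p_1-p_j} q\,p_j\, N_{g,n-1}(q,p_{N\setminus\{1,j\}})$; (ii) the mirror case $p_1=\ell$, contributing $\sum_{q=0}^{p_j-p_1} q\,p_1\, N_{g,n-1}(q,p_{N\setminus\{1,j\}})$; and (iii) the case $a_{1\eta}=2$ where removal cuts off a bare loop bounding face $j$ --- this is your Case~B with one \emph{unstable} component, and it must be rerouted into the $(g,n-1)$ terms with contribution $\sum_{q=0}^{p_1-p_j} q(p_1-p_j-q)\,N_{g,n-1}(q,p_{N\setminus\{1,j\}})$, so merely imposing the stability restriction on your disconnected subcase loses these configurations rather than accounting for them. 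Moreover the generic merged-face count is naturally supported on $q>|p_1-p_j|$; one must extend that sum to $q\ge 0$, subtract the spurious range $q\le|p_1-p_j|$, and combine the subtraction with (i)--(iii). It is exactly this algebraic rearrangement that produces the two signed Heaviside sums in \eqref{eq:integralrecursion}; without identifying the tadpole cases and carrying out this bookkeeping, the signs you hope to ``verify'' cannot be pinned down, because the stated formula only emerges after that step.
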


\begin{proof}
The key idea is to count all integral ribbon graphs
with a cilium placed on the face named $1$. The number is 
clearly equal to $p_1  N_{g,n}(p_N)$. We then
analyze what happens when we remove the ciliated edge
from the ribbon graph. There are several situations after
the removal of this edge. The right-hand side of the 
recursion formula is obtained by the case-by-case analysis
of the edge removal operation. For any ciliated ribbon graph 
of type $(g,n)$ subject to 
the condition $2g-2+n >1$, removing the ciliated edge
creates a new graph of type
$(g, n-1)$ or $(g-1, n+1)$, or two disjoint graphs of types
$(g_1, n_1+1)$ and $(g_2,n_2+1)$ subject to the
stability condition and the partition condition
$$
\begin{cases}
g_1+g_2=g\\
n_1+n_2=n-1 .
\end{cases}
$$
Note that in each case the quantity $2g-2+n$ is reduced
exactly by $1$.

Let $\eta$ be the edge bordering face $1$ 
of a ribbon graph $\Gamma$ on which 
the cilium is placed, and $a_{1\eta}$
the incidence number of (\ref{eq:incidence}). 
Let $\ell\in \bZ_+$ be the length of edge $\eta$.
There are two main situations: 
$a_{1\eta} = 1$ and $a_{1\eta} = 2$. Each main situation 
breaks down further into three cases.
Before examining each care in detail, we
first need to analyze the effect of $\Aut(\Gamma)$
in the edge removal operation. Note that
the automorphism group fixes each face. Thus
$\eta$ moves to another edge $\eta'$ of face $1$. 
If $\eta = \eta'$, then the automorphism
is unaffected by the edge removal and we have
$\Aut(\Gamma) = \Aut(\Gamma\setminus \eta)$,
where the right-hand side is a product group
if $\Gamma\setminus \eta$ is disconnected. 
If $\eta\ne \eta'$, then placing a cilium on $\eta$
or $\eta'$ inside face $1$ is indistinguishable,
 and this identification is
accounted for in the counting $p_1 N_{g,n}(p_N)$.

\begin{case}
$a_{1\eta} = a_{j\eta}=1$ for  $j\ge 2$,
 $p_1>\ell$ and $p_j>\ell$.
Define 
$q = (p_1-\ell) + (p_j-\ell) >0.$ Then we have
$$
\begin{cases}
q-p_1+p_j = 2(p_j-\ell)>0\\
q+p_1-p_j=2(p_1-\ell)>0.
\end{cases}
$$
Therefore, $q>|p_1-p_j|$. Geometrically, $q$ is the perimeter
length
of the face created by removing edge $\eta$ that separates
faces $1$ and $j$ (see Figure~\ref{fig:case1}). 

To recover the
original ribbon graph with a cilium on edge $\eta$ 
of length $\ell$ from
the one without edge $\eta$, we need to place the edge  
 on the face of perimeter $q$, and place 
a cilium on this edge. Here we note that the data
$p_i, p_j, q$ and $\ell$ are all prescribed.
The number of ways to place an endpoint of
the edge on the face of perimeter length $q$ is $q$.  
This point uniquely determines the edge
we need, since the other endpoint is $p_1-\ell$ away from
the first endpoint along the perimeter 
measured by the clockwise distance. 
The enclosed face of perimeter length $p_1$ becomes face $1$,
and the other side of the newly placed edge is face $j$.
Since the ciliation is done on face $1$, there are $\ell$
choices for the assignment of the root of the 
cilium. Altogether, the contribution of 
this case is
\begin{equation}
\label{eq:case1}
\sum_{q=|p_1-p_j|+1} ^{p_1+p_j}
q\;\frac{p_1+p_j-q}{2} \; N_{g,n-1}(q,p_{N\setminus\{1,j\}}).
\end{equation}
\end{case}

\begin{figure}[htb]
\centerline{\epsfig{file=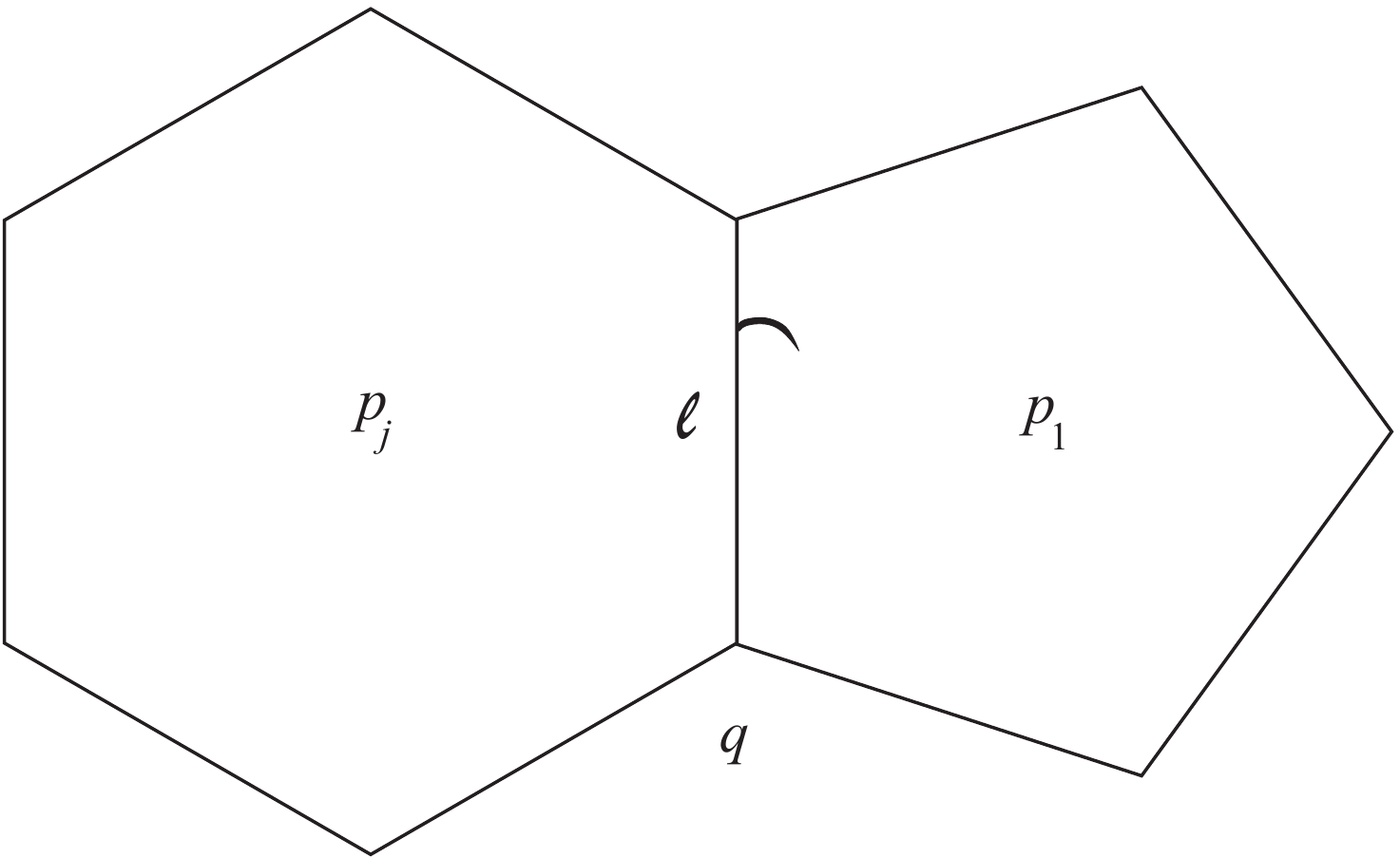, width=2in}}
\caption{Case $1$: $a_{1\eta} = a_{j\eta}=1$,
 $p_1>\ell$ and $p_j>\ell$.}
\label{fig:case1}
\end{figure}

\begin{case} $a_{1\eta} = a_{j\eta}=1$ for  $j\ge 2$, and
 $p_1\ge p_j =\ell$. Since $p_j=\ell$, face $j$ and
 edge $\eta$ are the same and forms a loop. This loop is connected
 to face $1$ by an edge $\eta'$ of incidence number $2$. Let
 $\ell'$ be the length of this connecting edge, which is bounded
 by $(p_1-p_j)/2 \ge \ell'\ge 0$ (see Figure~\ref{fig:case2}, left).
 This time define $q=p_1-p_j-2\ell'$. This is the perimeter length
 of the face created by removing face $j$ and edge $\eta'$.
 In this situation, removing edge $\eta$ ($=$ face $j$)
 alone does not create
 an admissible ribbon graph, since  edge $\eta'$ remains
 with a vertex of degree $1$ at one end. Therefore, we need
 to remove the entire \emph{tadpole} consisting of a
 head of face $j$ and a tail of edge $\eta'$. The cilium is 
 on face $1$, which is attached to the outer boundary of face
 $j$.
 
 To recover the original graph from the result of this tadpole
 removal, we have $q$ choices for the tadpole placement
 and $p_j=\ell$ choices for ciliation. Therefore, the contribution
 from this case is 
\begin{equation}
\label{eq:case2}
\sum_{q=0} ^{p_1-p_j}
qp_j N_{g,n-1}(q,p_{N\setminus\{1,j\}}).
\end{equation}
\end{case}

\begin{figure}[htb]
\centerline{\epsfig{file=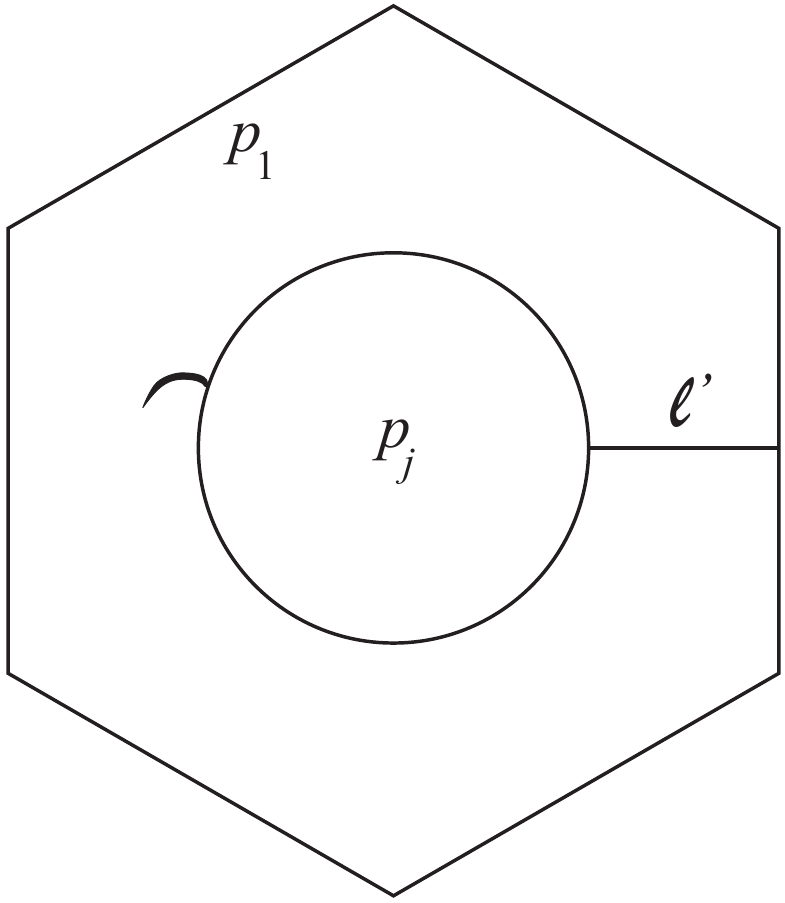, width=1.2in}
\hskip0.3in
\epsfig{file=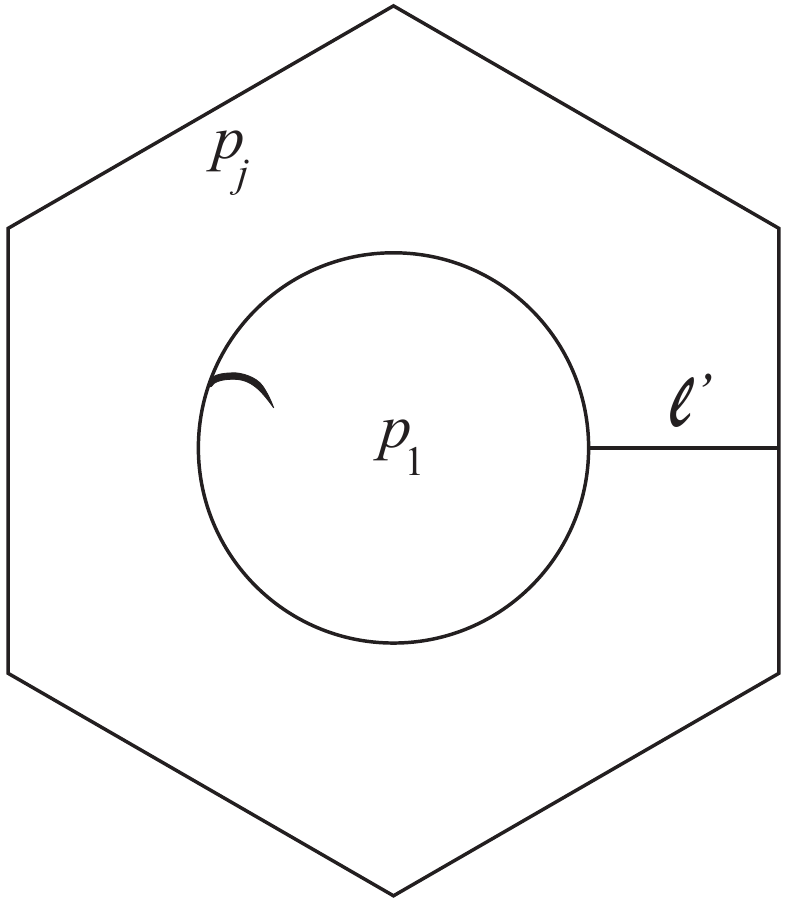, width=1.2in}
\hskip0.3in
\epsfig{file=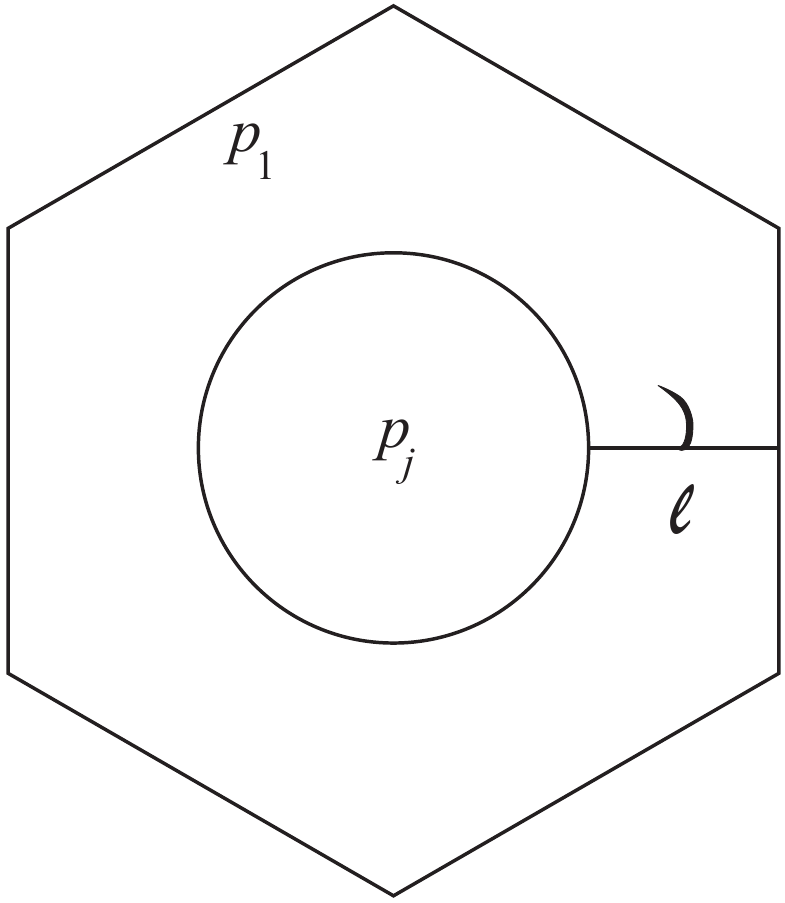, width=1.2in}}
\caption{Case $2$ (left): $a_{1\eta} = a_{j\eta}=1$ and
 $p_1\ge p_j=\ell$; Case $3$ (center):
 $a_{1\eta} = a_{j\eta}=1$ and
 $p_j\ge p_1=\ell$; and Case $4$ (right):
 $a_{1\eta}=2$ and the edge $\eta$ connects a loop
 $j$ to the rest of the graph.}
\label{fig:case2}
\end{figure}

\begin{case}
$a_{1\eta} = a_{j\eta}=1$ for  $j\ge 2$, and
 $p_j\ge p_1 =\ell$. The situation is similar to 
 Case~2 (see Figure~\ref{fig:case2}, center).
 Let $\eta'$ be the edge of length $\ell'$ that connects face 
 $1$ and face $j$.
 Define $q=p_j-p_1-2\ell'$. This is the perimeter length
 of the face created by removing the entire tadpole consisting
 of face $1$ with a cilium as its head and edge $\eta'$ as its tail.
 We have $q$ choices for tadpole placement and $p_1$ choices
 for ciliation. 
 Thus the contribution is 
 \begin{equation}
\label{eq:case3}
\sum_{q=0} ^{p_j-p_1}
qp_1 N_{g,n-1}(q,p_{N\setminus\{1,j\}}).
\end{equation}
\end{case}

\begin{case} $a_{1\eta}=2$ and removal of edge $\eta$
separates a single loop $j$ for some $j\ge 2$ from the rest of the
graph (see Figure~\ref{fig:case2}, right). It is necessary 
that $p_1>p_j$ in this case.
Since a single loop
alone is not an admissible graph, we need to remove
face $j$ together when we remove edge $\eta$. 
Define $q=p_1-p_j-2\ell$, which is the perimeter length of the
face created after the removal of the tadpole. This time 
the recovery process has $q$ choices of tadpole placement
and $2\ell$ choices for ciliation, because the cilium can be
placed on either side of the tail. Thus the contribution is
 \begin{equation}
\label{eq:case4}
\sum_{q=0} ^{p_1-p_j}
q(p_1-p_j-q) N_{g,n-1}(q,p_{N\setminus\{1,j\}}).
\end{equation}
\end{case}

\begin{case} $a_{1\eta}=2$ and removal of edge $\eta$
creates a connected ribbon graph. 
The removal of edge $\eta$ breaks face $1$ into two separate
faces of perimeter lengths $q_1$ and $q_2$ subject to the
condition $0<q_1+q_2<p_1$.
The removal of
the edge reduces the genus by $1$, and increases the number of 
faces by $1$. We have the equality
$p_1 = q_1+q_2+2\ell$ (see Figure~\ref{fig:case5}).
To recover the original graph from the result of the edge removal,
we have $q_1$ choices for one endpoint of edge $\eta$, 
$q_2$ choices for the other endpoint, and $2\ell$ choices for
ciliation, again because the cilium can be placed on either side of 
edge $\eta$. Altogether the contribution is
 \begin{equation}
\label{eq:case5}
\half \sum_{0\le q_1+q_2\le p_1}
q_1q_2(p_1-q_1-q_2) N_{g-1,n+1}(q_1,q_2,p_{N\setminus\{1\}}).
\end{equation}
Here we need the factor $\half$, which is the symmetry factor
of interchanging $q_1$ and $q_2$. 
\end{case}

\begin{figure}[htb]
\centerline{\epsfig{file=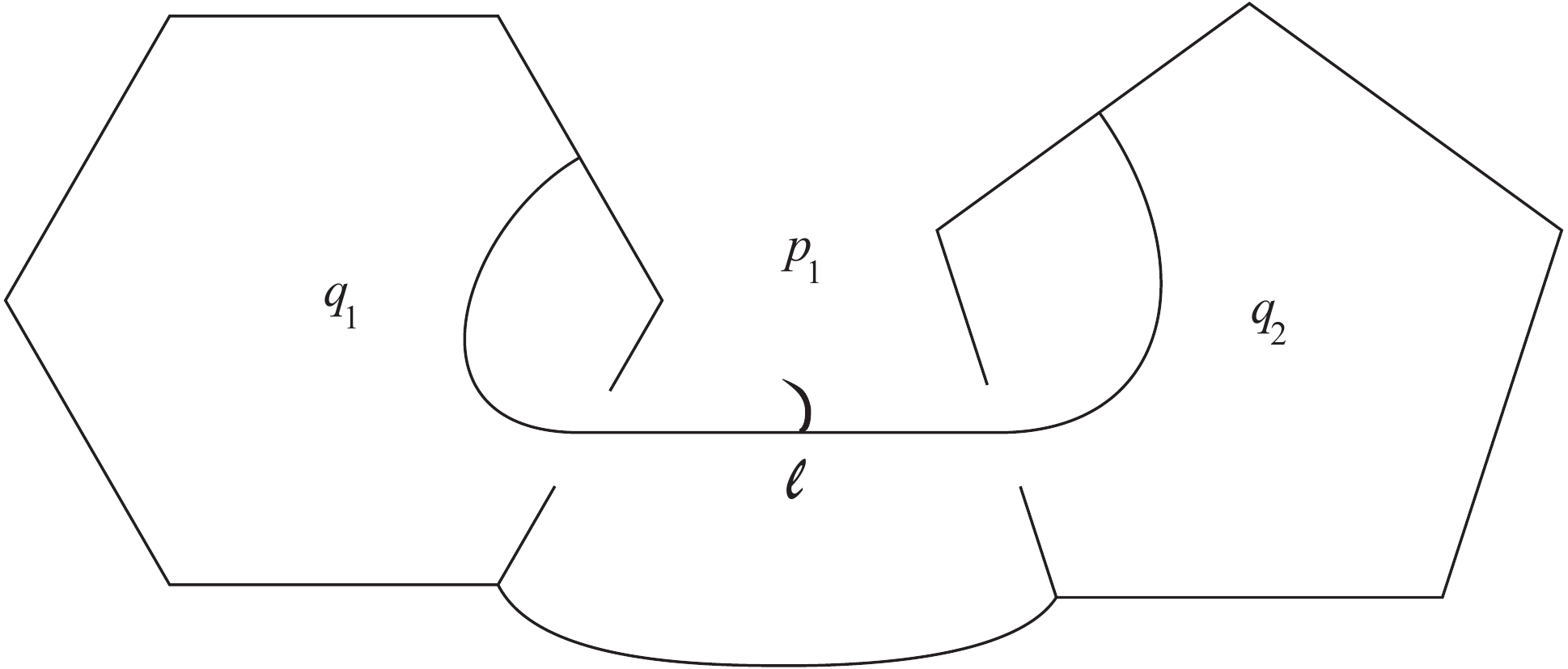, width=2.5in}}
\caption{Case $5$: $a_{1\eta}=2$
and removal of edge $\eta$
creates a connected ribbon graph.}
\label{fig:case5}
\end{figure}

\begin{case} $a_{1\eta}=2$ and removal of edge $\eta$
creates a disjoint union of two ribbon graphs. 
There are $n$ faces in the original ribbon graph $\Gamma$.
The removal of edge $\eta$ breaks face $1$ into two separate
faces of perimeter lengths $q_1$ and $q_2$. The other
faces $2, 3, \dots, n$ remain intact. Let $I\subset N\setminus\{1\}$
be the label of faces that are connected to the new face of perimeter
length $q_1$, and $J\subset N\setminus\{1\}$ for $q_2$. 
Then the two disjoint ribbon graphs have types $(g_1,|I|+1)$
and $(g_2,|J|+1)$ satisfying the partition condition 
$$
\begin{cases}
g_1+g_2=g\\
I\sqcup J=N\setminus\{1\}.
\end{cases}
$$
The contribution from this case is
\begin{equation}
\label{eq:case6}
\half \sum_{0\le q_1+q_2\le p_1}
q_1q_2(p_1-q_1-q_2)
\sum_{\substack{g_1+g_2=g\\
I\sqcup J=N\setminus\{1\}}} ^{\rm{stable}}
N_{g_1,|I|+1}(q_1,p_I)
N_{g_2,|J|+1}(q_2,p_J)
\end{equation}
with the symmetry factor $\half$ corresponding to 
interchanging $q_1$ and $q_2$.
\end{case}

\begin{figure}[htb]
\centerline{\epsfig{file=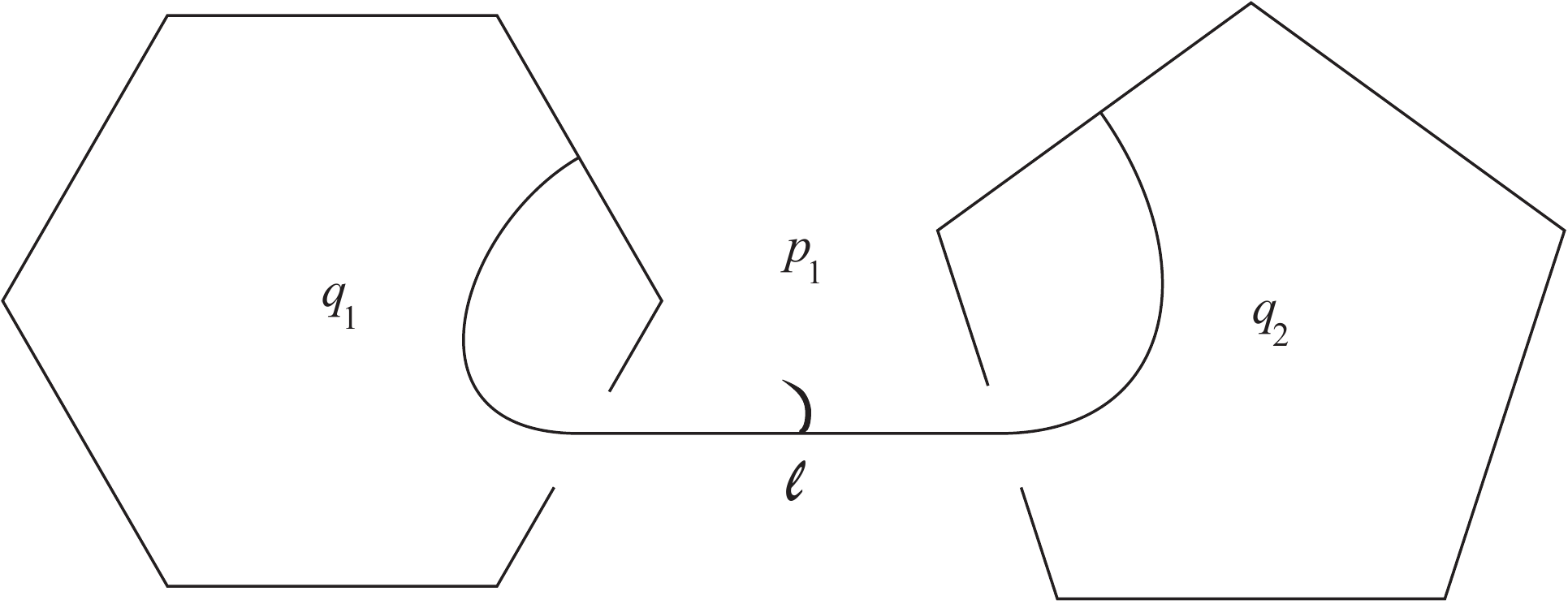, width=2.5in}}
\caption{Case $6$: $a_{1\eta}=2$
and removal of edge $\eta$
creates a disjoint union of two ribbon graphs.}
\label{fig:case6}
\end{figure}

Summing all contributions (\ref{eq:case1})-(\ref{eq:case6}),
 we obtain
\begin{multline}
\label{eq:integralrecursion2}
p_1  N_{g,n}(p_N)
=
\sum_{j=2} ^n \sum_{q=|p_1-p_j|+1} ^{p_1+p_j}
q\;\frac{p_1+p_j-q}{2} \; N_{g,n-1}(q,p_{N\setminus\{1,j\}})
\\
+
\sum_{j=2} ^n 
H(p_1-p_j)\sum_{q=0} ^{p_1-p_j}
q(p_1-q) \; N_{g,n-1}(q,p_{N\setminus\{1,j\}})
\\
+
\sum_{j=2} ^n
H(p_j-p_1)\sum_{q=0} ^{p_j-p_1}
q p_1 \;
N_{g,n-1}(q,p_{N\setminus\{1,j\}})
\\
+\half \sum_{0\le q_1+q_2\le p_1}q_1q_2(p_1-q_1-q_2)
\Bigg[
N_{g-1,n+1}(q_1,q_2,p_{N\setminus\{1\}})
\\
+\sum_{\substack{g_1+g_2=g\\
I\sqcup J=N\setminus\{1\}}} ^{\rm{stable}}
N_{g_1,|I|+1}(q_1,p_I)
N_{g_2,|J|+1}(q_2,p_J)\Bigg].
\end{multline}
If we allow the variable $q$ to range from $0$ to 
$p_1+p_j$ in the first summation
of the right-hand side of (\ref{eq:integralrecursion2}), 
then we need to compensate the non-existing
cases. Note that  we have
\begin{multline}
\label{eq:intermediate}
-\sum_{j=2} ^n \sum_{q=0} ^{|p_1-p_j|}
q\;\frac{p_1+p_j-q}{2} \; N_{g,n-1}(q,p_{N\setminus\{1,j\}})
\\
+
\sum_{j=2} ^n 
H(p_1-p_j)\sum_{q=0} ^{p_1-p_j}
q\; \frac{2p_1-2q}{2} \; N_{g,n-1}(q,p_{N\setminus\{1,j\}})
\\
+
\sum_{j=2} ^n
H(p_j-p_1)\sum_{q=0} ^{p_j-p_1}
q \; \frac{2p_1}{2} \;
N_{g,n-1}(q,p_{N\setminus\{1,j\}})
\\
=
\sum_{j=2} ^n 
H(p_1-p_j)\sum_{q=0} ^{p_1-p_j}
q\; \frac{p_1-p_j-q}{2} \; N_{g,n-1}(q,p_{N\setminus\{1,j\}})
\\
-
\sum_{j=2} ^n
H(p_j-p_1)\sum_{q=0} ^{p_j-p_1}
q \; \frac{p_j-p_1-q}{2} \;
N_{g,n-1}(q,p_{N\setminus\{1,j\}}).
\end{multline}
Substituting (\ref{eq:intermediate}) in (\ref{eq:integralrecursion2}),
we obtain (\ref{eq:integralrecursion}).
This completes the proof.
\end{proof}

\begin{rem} The topological recursion
 for 
$N_{g,n}(\bp)$ was first considered  by Norbury in \cite{N1}. 
His proof is similar in that it involved an edge
removal operation, but
the main formula and its proof therein  contained
are incorrectly recorded -- the terms involving products of functions
$N_{g,n}$ were double counted and need a compensating factor of
$\frac{1}{2}$. A corrected version appears in \cite{DN1, N3}.
Our proof presented
 here is new, and is
 based on  a different idea using ciliation.
\end{rem}

\section{The Laplace transform of the number of 
integral ribbon graphs}
\label{sect:LTintegral}

The limit formula (\ref{eq:N=v}) tells us that
$N_{g,n}(\bp)$ 
asymptotically behaves like a polynomial
for  large $\bp\in\bZ_+ ^n$, and the coefficients of the leading
terms correspond to that of
 the Euclidean volume function $v_{g,n}^E (\bp)$. 
 The lack of the direct relation between 
 $N_{g,n}(\bp)$ and $v_{g,n}^E (\bp)$, together with 
 equation  (\ref{eq:N=v}), suggest that we need to 
 consider an integral transform, such as 
 the Laplace transform of $N_{g,n}(\bp)$, to extract the 
 information of the Euclidean volume of $RG_{g,n}(\bp)$
 from it.
Since 
  $$
 \int_0 ^\infty x^m e^{-xw}dx = \frac{m!}{w^{m+1}}
 $$
 for a complex variable $w\in \bC$ with $Re(w)>0$,
the coefficients of the highest order poles of
 the Laplace transform
 \begin{equation}
 \label{eq:Lgn}
 L_{g,n}(w_1,\dots,w_n) \overset{\rm{def}}{=} 
 \sum_{\bp\in\bZ_{+} ^n} N_{g,n}(\bp) e^{-\la \bp,w\ra}
 \end{equation}
 should
represent the Euclidean volume of $RG_{g,n}(\bp)$.
Here $\la p,w\ra=p_1w_1+\cdots+p_nw_n$. 
This section is devoted to the analysis of the Laplace
transform of the topological recursion
(\ref{eq:integralrecursion}).

To relate our investigation with the Hurwitz theory
and the Witten-Kontsevich theory, and  in particular from 
the point of view of the \emph{polynomial} expressions of
\cite{EMS, MZ}, we introduce new complex coordinates
\begin{equation}
\label{eq:t}
e^{-w} = \frac{t+1}{t-1}\qquad
{\rm{and}}\qquad e^{-w_j} = \frac{t_j+1}{t_j-1},
\end{equation}
and express the result of the Laplace transform in terms of these
$t$-variables. 
This substitution makes sense because the Laplace transform
is a rational function in $e^{-w_j}$'s.

\begin{thm}
\label{thm:LTrecursion} 
Define 
$\cL_{g,n}(t_1,\dots,t_n)$ by
\begin{multline}
\label{eq:calL}
\cL_{g,n}(t_1,\dots,t_n) \;dt_1\tensor\cdots
\tensor dt_n
=
(d_1\tensor\cdots\tensor d_n)
L_{g,n}\big(w_1(t),\dots,w_n(t)\big)
\\
=
 \frac{\partial^n}{\partial w_1\cdots\partial w_n}
L_{g,n}(w_1,\dots,w_n)\;dw_1\tensor\cdots\tensor dw_n
\end{multline}
using the coordinate change {\rm{(\ref{eq:t})}}.
The differentials $dt_j$ and $dw_j$ are related by
$$
dw_j = \frac{2}{t_j ^2-1} \; dt_j.
$$
Then every
$\cL_{g,n}(t_1,\dots,t_n) $
for $2g-2+n>0$ is a
Laurent polynomial of degree $3g-3+n$ in $t_1 ^2, t_2 ^2,\dots,
t_n ^2$. The initial values are
\begin{equation}
\label{eq:LT03}
\cL_{0,3}(t_1,t_2,t_3) = -\frac{1}{16}\left(
1-\frac{1}{t_1 ^2\; t_2 ^2 \;t_3 ^2}
\right)
\end{equation}
and 
\begin{equation}
\label{eq:LT11}
\cL_{1,1}(t)=-\frac{1}{128}\cdot \frac{(t^2-1)^3}{t^4}.
\end{equation}
 The functions 
 $\cL_{g,n}(t_1,\dots,t_n) $ for all $(g,n)$ subject to
 $2g-2+n>0$ are uniquely
  determined by the topological recursion formula
\begin{multline}
\label{eq:LTrecursion}
\cL_{g,n}(t_N)
\\
=
-\frac{1}{16}\sum_{j=2} ^n 
\frac{\partial}{\partial t_j}
\left[
\frac{t_j}{t_1^2-t_j^2}
\left(
\frac{(t_1^2-1)^3}{t_1^2}
\cL_{g,n-1}(t_{N\setminus\{j\}})
-\frac{(t_j^2-1)^3}{t_j^2}
\cL_{g,n-1}(t_{N\setminus\{1\}})
\right)
\right]
\\
-\frac{1}{32}\; \frac{(t_1^2-1)^3}{t_1^2}
\left[
\cL_{g-1,n+1}(t_1,t_1,t_{N\setminus\{1\}})
+\sum_{\substack{g_1+g_2=g\\I\sqcup J=N\setminus\{1\}}}
^{\rm{stable}}
\cL_{g_1,|I|+1}(t_1,t_I)\cL_{g_2,|J|+1}(t_1,t_J)
\right].
\end{multline}
Here we use the same convention of notations as in 
Theorem~{\rm{\ref{thm:integralrecursion}}}. 
\end{thm}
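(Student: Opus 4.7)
The plan is to apply the Laplace transform (\ref{eq:Lgn}) to both sides of the recursion (\ref{eq:integralrecursion}), convert the result to the $t$-coordinates via (\ref{eq:t}), and then differentiate in $w_2,\ldots,w_n$ to pass from $L_{g,n}$ to $\mathcal{L}_{g,n}$. On the left, $p_1 N_{g,n}(\bp)$ transforms to $-\partial_{w_1}L_{g,n}$, which after the remaining differentiation and the coordinate change delivers $\mathcal{L}_{g,n}(t_N)$ on the nose. The initial cases $(g,n)=(0,3)$ and $(1,1)$ will be verified by a direct enumeration of the trivalent ribbon graphs of those types, computing $N_{0,3}$ and $N_{1,1}$ in closed form, summing the resulting geometric series, and applying (\ref{eq:t}). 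Once the recursion and the initial data are in place, the Laurent polynomial claim is proved by induction on $2g-2+n$.

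The Laplace transform of Cases~5 and 6, which produce the second bracket of (\ref{eq:LTrecursion}), is the easier piece. Since $\sum_{q_1,q_2,r\geq 0}q_1 q_2 r\,x^{q_1+q_2+r}=x^3/(1-x)^6$ with $x=e^{-w_1}$, the triple convolution $\sum_{q_1+q_2\leq p_1}q_1 q_2(p_1-q_1-q_2)(\cdots)$ contributes a universal kernel which, together with the conversion $\partial/\partial w_1 = -\tfrac{1}{2}(t_1^2-1)\partial/\partial t_1$, transforms to precisely $\tfrac{1}{32}(t_1^2-1)^3/t_1^2$ in $t$-coordinates, multiplying the correct diagonal value $\mathcal{L}_{g-1,n+1}(t_1,t_1,t_{N\setminus\{1\}})$ and the stable product sum.

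The main work lies in assembling the first bracket of (\ref{eq:LTrecursion}) from the three $j$-sums on the first two lines of (\ref{eq:integralrecursion}). With $x=e^{-w_1}$, $y=e^{-w_j}$, and $F_j(z)=\sum_q N_{g,n-1}(q,p_{N\setminus\{1,j\}})z^q$, the unrestricted sum $\sum_{q=0}^{p_1+p_j}q(p_1+p_j-q)(\cdots)$ Laplace-transforms to the antisymmetric expression $\frac{1}{x-y}\bigl[\tfrac{xy^2 F_j'(y)}{(1-y)^2}-\tfrac{yx^2 F_j'(x)}{(1-x)^2}\bigr]$. The two Heaviside-restricted sums, after evaluating them over their triangular domains $\{(p_1,p_j):p_1>p_j\}$ and $\{p_j>p_1\}$, produce modification terms that combine with the unrestricted piece to yield a similarly antisymmetric expression. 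The coordinate identities $1-x=-2/(t_1-1)$, $x-y=-2(t_1-t_j)/[(t_1-1)(t_j-1)]$, and $dw_j = \tfrac{2}{t_j^2-1}dt_j$ then convert this expression into exactly the first bracket of (\ref{eq:LTrecursion}), with the derivative $\partial/\partial t_j$ arising from $\partial/\partial w_j = -\tfrac{1}{2}(t_j^2-1)\partial/\partial t_j$. This delicate symmetrization step—showing that three a~priori asymmetric pieces collapse into the clean form $\partial_{t_j}\bigl[\tfrac{t_j}{t_1^2-t_j^2}(g(t_1)\mathcal{L}-g(t_j)\mathcal{L})\bigr]$ with $g(t)=(t^2-1)^3/t^2$—is the main obstacle, and is precisely why the authors defer the full algebraic verification to Appendix~\ref{app:LTProof}.

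Finally, the Laurent polynomial property is an induction on $2g-2+n$. The initial data $\mathcal{L}_{0,3}$ and $\mathcal{L}_{1,1}$ in (\ref{eq:LT03}) and (\ref{eq:LT11}) are Laurent polynomials in $t_i^2$ of the stated degrees $0$ and $1$. At the inductive step, the potential pole at $t_j^2=t_1^2$ in $\tfrac{t_j}{t_1^2-t_j^2}\bigl[g(t_1)\mathcal{L}(t_{N\setminus\{j\}})-g(t_j)\mathcal{L}(t_{N\setminus\{1\}})\bigr]$ cancels because the bracket vanishes on the diagonal $t_1=\pm t_j$. Since $g(t)$ is a Laurent polynomial in $t^2$ of top degree $2$ and bottom degree $-1$, and $\partial/\partial t_j$ lowers the degree of a Laurent polynomial in $t_j^2$ by one, the total $t^2$-degree on the right-hand side of (\ref{eq:LTrecursion}) equals one plus the inductive complexity, which closes the induction at $3g-3+n$.
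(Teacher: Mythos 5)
Your overall route is the same as the paper's (Appendix~\ref{app:LTProof}): arrange the transform so that $p_1N_{g,n}$ matches the definition of $\cL_{g,n}$, Laplace-transform the terms of (\ref{eq:integralrecursion}), change variables by (\ref{eq:t}), compute $(0,3)$ and $(1,1)$ directly, and get Laurent polynomiality by induction. The genuine gap is that your generating-function computations drop the parity constraint. Since $N_{g,n}(\bp)=0$ unless $p_1+\cdots+p_n$ is even, the Laplace transform sums only over even-parity $\bp$; moreover (\ref{eq:integralrecursion}) is only asserted for such $\bp$ (for odd parity the left side vanishes while the literal right side need not, since then $p_1-q_1-q_2$ and $p_1\pm p_j-q$ are odd and would correspond to half-integer edge lengths). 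Hence in every term the free variable $p_1\pm p_j-q$ or $p_1-q_1-q_2$ runs over even integers $2\ell$, and the universal kernel is $\sum_\ell 2\ell e^{-2\ell w_1}=2/(e^{w_1}-e^{-w_1})^2=(t_1^2-1)^2/(8t_1^2)$, not the unrestricted $\sum_r r e^{-rw_1}=(t_1^2-1)/4$ underlying your identity $\sum_{q_1,q_2,r}q_1q_2r\,x^{q_1+q_2+r}=x^3/(1-x)^6$. This is not cosmetic: carried through, your version yields the prefactor $-\tfrac{1}{16}(t_1^2-1)^2$ on the $\cL_{g-1,n+1}$ and product terms instead of $-\tfrac{1}{32}(t_1^2-1)^3/t_1^2$; the $1/t_1^2$ and the constants $1/16$, $1/32$ --- that is, the whole factor $(t^2-1)^3/t^2$ in (\ref{eq:LTrecursion}) --- come precisely from the parity-restricted geometric series. (Also, no $w_1$-derivative occurs in converting that term, so there is nothing for your proposed use of $\partial/\partial w_1$ to act on; and the correct conversion is $\partial/\partial w_j=\frac{t_j^2-1}{2}\,\partial/\partial t_j$, without a minus sign.)

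The same issue afflicts the first bracket, where the parity-restricted sums produce the denominators $(e^{w}-e^{-w})^2$ of Proposition~\ref{prop:LTinw}, not your $(1-x)^2$, $(1-y)^2$; and even setting parity aside your stated transform of the unrestricted $q$-sum does not check out: with $F(z)=\sum_q N_{g,n-1}(q,\cdot)\,z^q$ it equals $\frac{1}{x-y}\bigl[\frac{x^3F'(x)}{(1-x)^2}-\frac{y^3F'(y)}{(1-y)^2}\bigr]$, not $\frac{1}{x-y}\bigl[\frac{xy^2F'(y)}{(1-y)^2}-\frac{yx^2F'(x)}{(1-x)^2}\bigr]$ (compare the two at $y=0$). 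Finally, the initial values cannot be obtained from trivalent graphs alone: $N_{0,3}\equiv 1$ on the even-parity set requires the figure-eight graph on the walls $p_i=p_j+p_k$, and $N_{1,1}(2q)=\frac14(q-1)+\frac16\sum_{r=1}^{q-1}(r-1)=\frac1{12}(q^2-1)$ requires the four-valent graph; dropping these cells changes (\ref{eq:LT03}) and (\ref{eq:LT11}). So while your strategy coincides with the paper's, the computational core as written would not reproduce (\ref{eq:LTrecursion}); the six case-by-case transforms must be redone with the substitutions $p_1\pm p_j-q=2\ell$ and $p_1-q_1-q_2=2\ell$, as in Appendix~\ref{app:LTProof}.
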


\noindent
If we assume (\ref{eq:LT03}), (\ref{eq:LT11}), and
(\ref{eq:LTrecursion}), then it is obvious that
$\cL_{g,n}(t_N)$ is a
Laurent polynomial in $t_1^2, \dots, t_n^2$ of degree
$3g-3+n$. 
The proof of (\ref{eq:LTrecursion}) is given in 
Appendix~\ref{app:LTProof}.
The initial values (\ref{eq:LT03})
and  (\ref{eq:LT11}) are calculated in
Appendix~\ref{app:examples}.

\section{The Euclidean volume of the moduli space}
\label{sect:Euc}

In this section we extract the information on 
the Euclidean volume function
from the Laurent polynomial $\cL_{g,n}(t_N)$. We then derive
a topological recursion for the Laplace transform of 
the Euclidean volume.
Let us recall the  Euclidean volume
function $v_{g,n}^E(\bp)$
of {\rm{(\ref{eq:ve})}}.

\begin{prop}
\label{prop:VE}
Let $V_{g,n}^E(t_N)$ be
the homogeneous leading terms of $\cL_{g,n}(t_N)$
for $(g,n)$ subject to $2g-2+n>0$.
Then we have
\begin{equation}
\label{eq:VE}
V_{g,n}^E(t_N) dt_1\tensor \cdots\tensor dt_n
= 
 d_1\tensor \cdots\tensor d_n 
 \int_{\bR_+^n} 
v_{g,n}^E(\bp)
e^{-\la w,\bp\ra}dp_1\cdots dp_n,
\end{equation}
where we change the $w$-variables to the $t$-variables according 
to the transformation 
{\rm{(\ref{eq:t})}}.
\end{prop}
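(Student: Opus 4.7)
The plan is to match the leading-order $t \to \infty$ asymptotics on both sides of (\ref{eq:VE}), using that both sides encode the most singular behavior of the discrete Laplace transform $L_{g,n}(w)$ at $w = 0$.

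First, I would establish that the most singular part of $L_{g,n}(w)$ at $w = 0$ coincides with $F(w) := \int_{\bR_+^n} v_{g,n}^E(\bp)\, e^{-\langle w, \bp\rangle} dp_1 \cdots dp_n$. This follows from the Riemann-sum identity (\ref{eq:N=v}): applying it with test function $f(\bp) = e^{-\langle w, \bp\rangle}$ for $\mathrm{Re}\, w_j > 0$ (justified on the unbounded domain by dominated convergence, using the polynomial growth bound $N_{g,n}(\bp) = O(|\bp|^{6g-6+2n})$ that follows from Ehrhart theory applied to the polytopes $P_\Gamma(\bp)$) and rescaling $\bp \mapsto \bp/k$ yields
\begin{equation*}
\lim_{k \to \infty}\frac{L_{g,n}(w/k)}{k^{6g-6+3n}} = F(w).
\end{equation*}
Since $F$ is a rational function homogeneous of degree $-(6g-6+3n)$ in $w$, this identifies it with the leading homogeneous pole of $L_{g,n}$ at the origin.

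Second, I would track how this leading singularity translates into the top-degree part of $\cL_{g,n}(t_N)$. Under $e^{-w_j} = (t_j+1)/(t_j-1)$, the limit $w_j \to 0$ corresponds to $t_j \to \infty$ with $w_j \sim -2/t_j$, so a monomial $w_1^{-a_1}\cdots w_n^{-a_n}$ with $\sum a_j = D$ becomes asymptotic to a multiple of $t_1^{a_1}\cdots t_n^{a_n}$, a polynomial of total degree $D$ in $t$. Hence a function homogeneous of degree $-D$ in $w$ is asymptotic to a homogeneous polynomial of degree $D$ in $t$. Applying $\partial^n/\partial w_1 \cdots \partial w_n$ raises $D$ by $n$, and multiplying by $dw_1\otimes\cdots\otimes dw_n = \prod_j 2/(t_j^2-1)\, dt_j \sim (2^n/\prod_j t_j^2)\, dt_1\otimes\cdots\otimes dt_n$ lowers $D$ by $2n$. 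Starting from $F$ with $D = 6g-6+3n$, the resulting polynomial has degree $6g-6+2n$ in $t$, matching the top homogeneous degree of $\cL_{g,n}(t_N)$ as a polynomial in $t_j^2$. The remainder $L_{g,n} - F$, being strictly less singular, contributes only strictly lower-degree monomials in $t$ after the same operations. Consequently, the top-degree homogeneous polynomial $V_{g,n}^E(t_N)$ equals the leading-order asymptotic of $(d_1 \otimes \cdots \otimes d_n) F(w(t))$, which is the content of (\ref{eq:VE}) interpreted as an equality of leading homogeneous components in $t$.

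The main technical obstacle is the rigorous extraction of the leading singularity in the first step. Besides the dominated convergence route, an equivalent and more direct alternative is to use the explicit rational-function structure of $L_{g,n}(w)$ in the variables $x_j = e^{-w_j}$ -- obtained as a sum over ribbon graphs of products of elementary geometric series -- and extract its pole at $w = 0$ algebraically, bypassing any convergence issues. Either route yields the same identification of $F$ with the leading singular component of $L_{g,n}$, after which the degree-matching bookkeeping of the second step is straightforward.
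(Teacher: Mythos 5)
Your proof is correct and follows essentially the same route as the paper: both arguments apply the Riemann-sum identity (\ref{eq:N=v}) with the test function $f(\bp)=e^{-\la w,\bp\ra}$ to identify the Laplace transform of $v_{g,n}^E$ with the rescaled limit $\lim_{k\to\infty}L_{g,n}(w/k)\,k^{-3(2g-2+n)}$, and then use the Laurent-polynomial structure of $\cL_{g,n}(t_N)$ together with the degree bookkeeping coming from $w_j\sim -2/t_j$, the $n$ derivatives, and the Jacobian factors to isolate the top homogeneous part $V_{g,n}^E(t_N)$. Your explicit reading of (\ref{eq:VE}) as an equality of leading homogeneous components as $t_N\to\infty$ is the correct interpretation (under the full transformation (\ref{eq:t}) the right-hand side is not a polynomial in $t_N$ and agrees with $V_{g,n}^E(t_N)\,dt_N$ only to leading order, which is also how the paper's own proof and its later use in (\ref{eq:LTvolumeestimate2}) and Theorem~\ref{thm:LTofErecursion} must be understood), and your dominated-convergence justification and alternative algebraic extraction of the pole of $L_{g,n}$ at $w=0$ simply make explicit steps the paper leaves implicit.
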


\begin{proof}
From (\ref{eq:N=v}), we have
\begin{multline*}
\int_{\bR_+^n} 
v_{g,n}^E(\bp) 
e^{-\la w,\bp\ra}dp_1\cdots dp_n
=
\lim_{k\rightarrow \infty}
\sum_{\bp\in \frac{1}{k}\bZ_+ ^n}
N_{g,n}(k\bp) 
e^{-\la w,\bp\ra} \frac{1}{k^{3(2g-2+n)}}
\\
=
\lim_{k\rightarrow \infty}
\sum_{\bp\in \bZ_+ ^n}
N_{g,n}(\bp) 
e^{-\frac{1}{k}\la w,\bp\ra} \frac{1}{k^{3(2g-2+n)}}
\\
=
\lim_{k\rightarrow \infty}
L_{g,n}\left(\frac{w_1}{k},\cdots,\frac{w_n}{k}\right) \frac{1}{k^{3(2g-2+n)}}.
\end{multline*}
The coordinate transformation (\ref{eq:t}) has the expansion
near $w=0$
\begin{equation}
\label{eq:wandt}
\begin{aligned}
t&=t(w)=-\frac{2}{w}-\frac{w}{6}+\frac{w^3}{360}-\frac{w^5}{15120}
+\cdots,
\\
w&=w(t)=-\frac{2}{t}-\frac{2}{3t^3}-\frac{2}{5t^5}-\cdots.
\end{aligned}
\end{equation}
Since
$$
\cL_{g,n}(t_N)=
\frac{\partial^n}{\partial t_1\cdots \partial t_n}
L_{g,n}\big( w(t_1),\dots,w(t_n)\big) 
$$
is a Laurent polynomial of degree $2(3g-3+n)$, and since
the change $w\mapsto w/k$ makes
$$
t\longmapsto k\; t+\cO\left(\frac{1}{k}\right)
$$
for a fixed value $t$, 
we have
\begin{multline*}
\frac{\partial^n}{\partial t_1\cdots \partial t_n}
\int_{\bR_+^n} 
v_{g,n}^E(\bp) 
e^{-\la w(t),\bp\ra}dp_1\cdots dp_n
\\
=
\lim_{k\rightarrow \infty}
\frac{\partial^n}{\partial t_1\cdots \partial t_n}\;
L_{g,n}\left(\frac{w(t_1)}{k},\cdots,\frac{w(t_n)}{k}\right)
\; \frac{1}{k^{3(2g-2+n)}}
\\
=
\lim_{k\rightarrow \infty}
\cL_{g,n}\left(
kt_1+\cO\left(\frac{1}{k}\right),\cdots,
kt_n+\cO\left(\frac{1}{k}\right)
\right)
\; \frac{k^n}{k^{3(2g-2+n)}}
=V_{g,n}^E(t_N).
\end{multline*}
This completes the proof.
\end{proof}

Since  $v_{g,n}^E(\bp)$ is defined by the push-forward
measure of the incidence matrix $A_\Gamma$
of (\ref{eq:incidence}) at each point $\Gamma\in RG_{g,n}(\bp)$,
we have
\begin{multline}
\label{eq:LTvolumeestimate2}
\int_{\bR_+^n} 
v_{g,n}^E(\bp)
e^{-\la w,\bp\ra}dp_1\cdots dp_n
\\
=
\sum_{\substack{\Gamma {\text{ trivalent ribbon}}\\
{\text{graph of type }} (g,n)}} 
\frac{1}{|\Aut(\Gamma)|}
\int_{\bR_+^n}
\vol(P_\Gamma(\mathbf{p}))
e^{-\la w,\bp\ra}dp_1\cdots dp_n
\\
=
\sum_{\substack{\Gamma {\text{ trivalent ribbon}}\\
{\text{graph of type }} (g,n)}} 
\frac{1}{|\Aut(\Gamma)|}
\int_{\bR_+^{e(\Gamma)}}
e^{-\la w,A_\Gamma \mathbf{x} \ra}dx_1\cdots dx_{e(\Gamma)}
\\
=
\sum_{\substack{\Gamma {\text{ trivalent ribbon}}\\
{\text{graph of type }} (g,n)}} 
\frac{1}{|\Aut(\Gamma)|}
\prod_{\eta=1} ^{e(\Gamma)} \frac{1}{\la w,a_\eta\ra},
\end{multline} 
where $a_1, \dots, a_{e(\Gamma)}$ are columns of the 
edge-face incidence matrix 
$$
A_\Gamma
=\big[a_1\big|a_2\big|\cdots\big |a_{e(\Gamma)}\big].
$$
We note that  $e(\Gamma)$ takes its maximum value
$3(2g-2+n)$ for a trivalent graph. Thus 
the last line of (\ref{eq:LTvolumeestimate2})
has a pole of order $3(2g-2+n)$ at $w=0$.
This expression also shows that the leading terms 
of $L_{g,n}\big(w(t_N)\big)$ 
as a function in $t_N$ using the expansion 
(\ref{eq:wandt}) around $t_N\sim\infty$
are the Laplace transform of the
Euclidean volume function. In particular, 
we deduce that $N_{g,n}(\bp)$ behaves asymptotically like
a polynomial of degree $2(3g-3+n)$ for  large $\bp\in\bR_+^n$.

Since $V_{g,n} ^E(t_N)$ is the leading terms of
$\cL_{g,n}(t_N)$, it is easy to obtain a topological 
recursion.

\begin{thm}
\label{thm:LTofErecursion}
The Laplace transformed Euclidean volume function
$V_{g,n} ^E(t_N)$ in the stable range $2g-2+n>0$
satisfies the following topological 
recursion:
\begin{multline}
\label{eq:LTErecursion}
V_{g,n} ^E(t_N)
=-\frac{1}{16}\sum_{j=2} ^n 
\frac{\partial}{\partial t_j}
\left[
\frac{t_j}{t_1^2-t_j^2}
\bigg(
t_1^4
V_{g,n-1}^E(t_{N\setminus\{j\}})
-t_j^4
V_{g,n-1}^E(t_{N\setminus\{1\}})
\bigg)
\right]
\\
-\frac{1}{32}\; t_1^4
\left[
V_{g-1,n+1}^E(t_1,t_1,t_{N\setminus\{1\}})
+\sum_{\substack{g_1+g_2=g\\I\sqcup J=N\setminus\{1\}}}
^{\rm{stable}}
V_{g_1,|I|+1}^E(t_1,t_I) V_{g_2,|J|+1}^E(t_1,t_J)
\right].
\end{multline}
\end{thm}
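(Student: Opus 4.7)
The plan is to deduce (\ref{eq:LTErecursion}) directly from (\ref{eq:LTrecursion}) by extracting the homogeneous top-degree components of both sides. By the definition used in Proposition~\ref{prop:VE}, $V_{g,n}^E(t_N)$ is the sum of the monomials of the Laurent polynomial $\cL_{g,n}(t_N)$ of top total degree $2(3g-3+n)$ in the variables $t_1,\dots,t_n$; equivalently, it is characterized by
\[
V_{g,n}^E(t_N) \;=\; \lim_{k\to\infty} k^{-2(3g-3+n)}\,\cL_{g,n}(k t_1,\dots, k t_n).
\]
So I would substitute $t_i\mapsto k t_i$ throughout (\ref{eq:LTrecursion}), divide both sides by $k^{2(3g-3+n)}$, and send $k\to\infty$; what comes out should be exactly (\ref{eq:LTErecursion}).

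The verification amounts to tracking how each building block of (\ref{eq:LTrecursion}) scales under $t_i\mapsto k t_i$. The rational factor $\frac{t_j}{t_1^2-t_j^2}$ is already homogeneous of degree $-1$, each operator $\partial/\partial t_j$ lowers the degree by $1$, the Laurent polynomial
\[
\frac{(t_i^2-1)^3}{t_i^2} \;=\; t_i^4 - 3 t_i^2 + 3 - t_i^{-2}
\]
has leading part $t_i^4$ of degree $4$, and every $\cL_{g',n'}$ has leading part $V_{g',n'}^E$ of degree $2(3g'-3+n')$. Adding these exponents in each term on the right-hand side of (\ref{eq:LTrecursion}) gives precisely $2(3g-3+n)$, so those terms survive the scaling, while the sub-leading pieces of $(t_i^2-1)^3/t_i^2$ and of the $\cL_{g',n'}$'s contribute strictly smaller powers of $k$ and drop out of the limit.

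In the limit, each factor $\frac{(t_i^2-1)^3}{t_i^2}$ is replaced by $t_i^4$ and each $\cL_{g',n'}$ by $V_{g',n'}^E$. The product $\cL_{g_1,|I|+1}(t_1,t_I)\cL_{g_2,|J|+1}(t_1,t_J)$ becomes $V_{g_1,|I|+1}^E(t_1,t_I)\,V_{g_2,|J|+1}^E(t_1,t_J)$ directly, and $\cL_{g-1,n+1}(t_1,t_1,t_{N\setminus\{1\}})$ becomes $V_{g-1,n+1}^E(t_1,t_1,t_{N\setminus\{1\}})$ because specializing two arguments to a common value commutes with extracting the top-degree homogeneous part.

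There is no serious obstacle; the proof is essentially degree bookkeeping. The only mild point requiring attention is that the bracketed expression $t_1^4 V_{g,n-1}^E(t_{N\setminus\{j\}}) - t_j^4 V_{g,n-1}^E(t_{N\setminus\{1\}})$ needs to be divisible by $t_1^2 - t_j^2$ so that the $j$-summand on the right-hand side of (\ref{eq:LTErecursion}) is genuinely a polynomial; but this is automatic from the corresponding divisibility in (\ref{eq:LTrecursion}), since the top-degree part of a polynomial in $t_1,\dots,t_n$ is itself a polynomial.
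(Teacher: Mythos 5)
Your proposal is correct and is essentially the paper's own argument: the paper proves Theorem~\ref{thm:LTofErecursion} by restricting (\ref{eq:LTrecursion}) to its leading terms, replacing $(t^2-1)^3/t^2=t^4-3t^2+3-t^{-2}$ by $t^4$ and each $\cL_{g',n'}$ by $V^E_{g',n'}$. Your scaling $t_i\mapsto kt_i$ merely makes the same degree bookkeeping explicit, and the homogeneity count you give matches the paper's.
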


\begin{proof}
The leading contribution of (\ref{eq:LTrecursion})
comes from the leading term of
$$
\frac{(t^2-1)^3}{t^2} = t^4-3t^2+3-\frac{1}{t^2}.
$$
Thus  (\ref{eq:LTrecursion})
reduces to (\ref{eq:LTErecursion}).
\end{proof}

\section{The symplectic volume of the moduli space
and the Kontsevich constants}
\label{sect:symp}

Suppose the $i$-th face of a metric ribbon graph 
 $\Gamma\in RG_{g,n}(\bp)$ consists
of edges labeled by $1, 2, \dots, k$ in 
 this cyclic order. 
 (Here again we are abusing the notation to indicate a metric
 ribbon graph by the same letter $\Gamma$.)
 If an edge appears twice in this list, then we
 count it repetitively.
 Denote by $\ell_\a$ the length of edge $\a$. 
 They satisfy the relation $\ell_1+\cdots+\ell_k=p_i$. 
 Note that the collection of edge lengths forms an 
 orbifold coordinate system
 on $RG_{g,n}$ at each point $\Gamma$.
Kontsevich \cite{K1992} defines 
 a $2$-form on $RG_{g,n}$  by
 \begin{equation}
 \label{eq:omegap}
 \omega_K(\bp)=\sum_{i=1} ^n p_i ^2 \omega_i,\qquad
 \omega_i = \sum_{\a<\b}d\left(\frac{\ell_\a}{p_i}\right)
 \wedge d\left(\frac{\ell_\b}{p_i}\right) \quad{\text{on face }}
\; i.
 \end{equation}
 If we change the cyclic order from 
 $(1,2,\dots,k)$ to $(2,3,\dots,k,1)$
 and define the form $\omega_i '$ in the same manner, then
 we have 
 $$
 \omega_i-\omega_i ' = 2 d\left(\frac{\ell_1}{p_i}\right)
  \wedge d\left(\frac{\ell_2+\cdots+\ell_k}{p_i}\right) = 0.
 $$
 Therefore, each  $\omega_i$ and $\omega_K(\bp)$
 are well defined as  genuine $2$-forms on $RG_{g,n}$. 
 The restriction of the $2$-form $\omega_K(\bp)$ defines a symplectic
 structure on $RG_{g,n}(\bp)\isom \cM_{g,n}$ for each 
 $\bp\in\bR_+ ^n$.

 To see the non-degeneracy of $\omega_K(\bp)$, 
 let us analyze the perimeter map $\pi$ locally around
 a trivalent ribbon graph
 $\Gamma$. As in Section~\ref{sect:combinatorial}
  we give a name to all edges of $\Gamma$,
 this time without repetition, indexed by
 $\{0,1,2,\dots,e(\Gamma)-1\}$. 
 Faces of $\Gamma$ are indexed by $N=\{1,2,\dots,n\}$.
The edge-face incidence matrix $A_\Gamma$ of
(\ref{eq:incidence})
gives
 the differential of the perimeter map
 $$
 A_\Gamma=d\pi_\Gamma 
 $$
 at the metric ribbon graph $\Gamma$ if it is trivalent.
 To set notations simple, we assume that faces $1$ through $4$
 and edges $0$ through $4$ are arranged as in 
 Figure~\ref{fig:vfield}.

\begin{figure}[htb]
\centerline{\epsfig{file=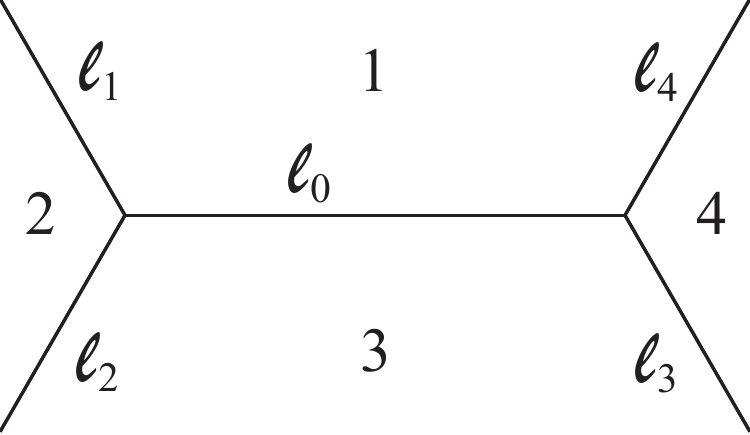, width=1.5in}}
\caption{The vector field $X_0$.}
\label{fig:vfield}
\end{figure}

Define the vector field
\begin{equation}
\label{eq:X}
X_0=-\frac{\partial}{\partial \ell_1}
+\frac{\partial}{\partial \ell_2}
-\frac{\partial}{\partial \ell_3}
+\frac{\partial}{\partial \ell_4} .
\end{equation}
 We then have
 \begin{align*}
 \iota_{X_0}\big(p_1^2\omega_1\big)\big|_{RG_{g,n}(\bp)}
  &= -d\ell_0
  -d\ell_1   -d\ell_4
 \\
 \iota_{X_0}\big(p_2^2\omega_2\big)\big|_{RG_{g,n}(\bp)}
  &=d\ell_1+d\ell_2
 \\
 \iota_{X_0}\big(p_3^2\omega_3\big)\big|_{RG_{g,n}(\bp)}
  &= -d\ell_0
  -d\ell_2   -d\ell_3
\\
\iota_{X_0}\big(p_4^2\omega_4\big)\big|_{RG_{g,n}(\bp)}
 &=d\ell_3+d\ell_4.
 \end{align*}
 Therefore, 
 $$
 \iota_{X_0}\omega_K(\bp)\big|_{RG_{g,n}(\bp)}
  = -2d\ell_0
  $$
 on the tangent space $T_\Gamma RG_{g,n}(\bp)$. This shows that 
 the $2$-form $\omega_K(\bp)$ restricted on $\Ker(d\pi_\Gamma)$ is a linear isomorphism. We refer to \cite{BCSW} for more
 detail.

 Alternatively, we can introduce the symplectic structure
 on $RG_{r,n}(\bp)$ through symplectic reduction.
 The ribbon graph complex $RG_{g,n}$ comes with a natural
 fibration on it, the \emph{tautological torus bundle}
 \begin{equation}
 \begin{CD}
 \bT @>{\mu}>> \bR_+ ^n\\
 @V{\tau}VV\\
 RG_{g,n}
 \end{CD}.
 \end{equation}
 The fiber of $\tau$ at a metric ribbon graph $\Gamma$ is
 the cartesian product 
 of the boundary of the $n$ faces of $\Gamma$, which is identified
 with the collection of $n$ polygons. Topologically each fiber of
 $\bT$
 is an $n$-dimensional torus $T^n = (S^1)^n$. We
 use the same letter $\bT$ for the total space of  this torus bundle,
 whose dimension $2(3g-3+2n)$ is always even.
 
 The identification of the $i$-th face of $\Gamma\in
 RG_{r,n}(\bp)$
  and the circle $S^1 
 = \bR/\bZ$ is given as follows. First we choose a vertex on
 the $i$-th polygon, and name the edges on the $i$-th face
 as $1,2,\dots,k$ in this cyclic order such that the chosen 
 vertex is the beginning point of edge $1$ and the end point of 
 edge $k$.
 Let $\ell_\a$ be the length of edge $\a$ as before. We choose
 a parameter $\phi_i$ subject to $0\le \phi_i\le \ell_1$. 
  Under the re-naming of the edges $(1,2,\dots,k)\longmapsto
 (2,3,\dots,k,1)$, $\phi_i$ changes to $\phi_i '=\phi_i+\ell_1$.
 The choice of the vertex and $\phi_i$ is identified with an element 
 of $S^1$, and also determines the torus action on the 
 fibration $\bT$.

 Define a $2$-form $\Omega$ by
 \begin{equation}
 \label{eq:Omega}
 \begin{aligned}
 \Omega &=\sum_{i=1} ^n \hat\omega_i\\
 \hat\omega_i&=
 \sum_{\a<\b} d\ell_\a \wedge d\ell_\b + d\left(\frac{\phi_i}{p_i}
 \right)
 \wedge d(p_i ^2).
 \end{aligned}
 \end{equation}
The cyclic re-naming of edges changes $\hat\omega_i$ to 
 $$
\hat\omega_i ' = \sum_{2\le \a<\b} d\ell_\a\wedge d\ell_\b
 + \sum_{2\le \a} d\ell_\a \wedge d\ell_1 +
 d\left(\frac{\phi_i+\ell_1}{p_i}
 \right)
 \wedge d(p_i ^2).
 $$
 Therefore, 
 $$
\hat \omega_i-\hat\omega_i ' = 
 \sum_{2\le \b} d\ell_1 \wedge d\ell_\b 
 - \sum_{2\le \a} d\ell_\a \wedge d\ell_1 +
2d\ell_1 \wedge dp_i=0,
$$
and hence $\Omega$ is a globally well-defined $2$-form
on the total space $\bT$.
The moment map of the torus action on $\bT$
is the assignment
$$
\mu: \bT\owns (\Gamma,\phi_1,\dots,\phi_n)
\longmapsto (p_1 ^2,\dots,p_n ^2)\in \bR_+ ^n.
$$
The symplectic quotient 
$\mu^{-1}(L)/\!/T^n$
of $\bT$ by this torus action 
 is 
$\big(RG_{g,n}(\bp),\omega_K(\bp)\big)$ of (\ref{eq:omegap}).

Now we define the symplectic volume of the moduli space
$\cM_{g,n}\isom RG_{g,n}(\bp)$ by
\begin{equation}
\label{eq:Svolume}
v^S _{g,n}(\bp)= \int_{RG_{g,n}(\bp)} \exp\big(\omega_K(\bp)
\big).
\end{equation}
 Applying the recursion  argument similar to our proof
 of Theorem~\ref{thm:integralrecursion}
 to the symplectic reduction 
 of $RG_{g,n}$ by the torus action,
the following theorem was established in \cite{BCSW}.
 
 \begin{thm}[\cite{BCSW}]
 \label{thm:Srecursion}
 The symplectic volume satisfies the following topological 
 recursion.
  \begin{multline}
  \label{eq:Srecursion}
p_1 v_{g,n} ^S(p_N) 
= 
\sum_{j=2}^n 
\Bigg[
\int_{0}^{p_1 + p_j} q(p_1 + p_j - q)
 v_{g,n-1}^S(q,p_{N\setminus \{1,j\}}) dq
 \\
 +
 H(p_1-p_j)
\int_{0}^{p_1 - p_j} q(p_1 - p_j - q)
 v_{g,n-1}^S(q,p_{N\setminus \{1,j\}}) dq
\\
-
H(p_j-p_1)
\int_{0}^{p_j - p_1} q(p_j - p_1 - q)
 v_{g,n-1}^S(q,p_{N\setminus \{1,j\}}) dq
 \Bigg]
 \\
 +
2  \iint _{0 \leq q_1+q_2 \leq p_1} q_1q_2(p_1 - q_1-q_2)
 \Bigg[
 v_{g-1,n+1}^S(q_1,q_2,p_{N\setminus\{1\}})
  \\
 + \sum_{\substack{g_1 + g_2 = g \\ 
 {I} \sqcup {J} = N\setminus\{1\}}} ^{\rm{stable}}
v_{g_1,|I|+1}^S(q_1,p_{{I}})v_{g_2,|J|+1}(q_2,p_{{J}})dq_1dq_2
\Bigg].
\end{multline}
 \end{thm}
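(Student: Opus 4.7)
The plan is to run the ciliated edge-removal argument of Theorem~\ref{thm:integralrecursion} in the continuous, symplectic setting. In place of the combinatorial ciliation, I would use the continuous circle coordinate $\phi_1 \in [0,p_1]$ attached to the first boundary of a metric ribbon graph, as in the construction of the tautological torus bundle $\bT\to RG_{g,n}$ from Section~\ref{sect:symp}. Integrating $\exp\omega_K(\bp)$ against $d\phi_1$ on the preimage of $RG_{g,n}(\bp)$ inside $\bT$ (with the other circle factors unspecified) produces $p_1 \cdot v_{g,n}^S(\bp)$, which is the left-hand side of~(\ref{eq:Srecursion}).

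I would then stratify this ciliated space according to which edge $\eta$ of face~$1$ contains the position $\phi_1$, using exactly the six cases that appear in the proof of Theorem~\ref{thm:integralrecursion}: $a_{1\eta}=1$ with the ciliated edge shared with a distinct face~$j$ (generic or tadpole configurations depending on whether $p_1\gtreqless p_j$), or $a_{1\eta}=2$ with the removal either connected and genus-reducing, or disconnecting. In each stratum the symplectic form $\omega_K(\bp)$ decomposes, under the edge-removal change of coordinates, into $\omega_K(\bp')$ on the reduced graph plus a simple wedge factor in the removed-edge length $\ell$ (and a tadpole-stem length $\ell'$ where relevant). This decomposition rests on the cyclic re-parametrization invariance $\hat\omega_i - \hat\omega_i' = 0$ of the face forms already established in the paragraph around~(\ref{eq:Omega}), which allows us to choose a canonical starting edge on each affected face.

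Integrating the Liouville measure $\exp\omega_K(\bp)$ along the edge-removal fibers then produces the polynomial weights $q(p_1 \pm p_j - q)$ in Cases~1--3 and $q_1 q_2(p_1-q_1-q_2)$ in Cases~5--6, now with integrals over $[0,p_1+p_j]$ or the triangle $\{q_1+q_2\le p_1\}$ replacing the discrete sums of~(\ref{eq:integralrecursion}); the inequalities $|p_1-p_j|<q<p_1+p_j$ or $q<p_1\pm p_j$ on the combinatorial side translate verbatim into the Heaviside-indexed integration ranges of~(\ref{eq:Srecursion}). The numerical coefficients (the absence of the overall $\half$ from~(\ref{eq:integralrecursion}) in the $j$-sum, and the factor $2$ in front of the second bracket rather than $\half$) should emerge from (a) the continuous cilium having a genuine two-sided degree of freedom on any edge with $a_{1\eta}=2$, and (b) the Jacobian of the coordinate change between the symmetric pair $(q_1,q_2)$ and the canonical ordered edge-length coordinates used to decompose $\omega_K(\bp)$ along the fibers of edge removal.

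The main obstacle I anticipate lies in Cases~2 and~3, where the removed piece is a tadpole consisting of an entire face~$j$ together with its connecting stem. In the lattice setting of Theorem~\ref{thm:integralrecursion} these were handled by direct counting of placements, but in the symplectic setting one must verify that the combined integral over the stem length $\ell'$ and the cilium coordinate $\phi_1$ yields precisely the weight $q(p_1\pm p_j - q)$ rather than a more intricate integrand. This will follow once one shows that $p_j^2 \omega_j$ on the collapsing face restricts to a multiple of $d\ell'\wedge d(\phi_1/p_1)$ on the stem-plus-cilium fiber; given that $\omega_K(\bp)$ is defined face-by-face in terms of edge-length ratios through~(\ref{eq:omegap}), this should reduce to a direct local computation parallel to the one carried out to show $\iota_{X_0}\omega_K(\bp)|_{RG_{g,n}(\bp)} = -2 d\ell_0$ in Section~\ref{sect:symp}.
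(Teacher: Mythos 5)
You should first note that this paper does not actually prove Theorem~\ref{thm:Srecursion}: it is quoted from \cite{BCSW} (listed as in preparation), and the text gives only a one-sentence indication of the strategy, namely applying the edge-removal recursion argument of Theorem~\ref{thm:integralrecursion} to the symplectic reduction of $RG_{g,n}$ by the torus action. Your outline follows exactly that indicated route --- the continuous cilium realized as the circle coordinate $\phi_1$ of the tautological torus bundle, followed by the same six-case edge-removal stratification --- so in spirit it is the intended argument rather than a new one.

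As a proof, however, it has a genuine gap: everything that distinguishes (\ref{eq:Srecursion}) from a formal transcription of the case analysis already done for (\ref{eq:integralrecursion}) is concentrated in the measure-theoretic step you only assert, namely that the Liouville measure $\exp\omega_K(\bp)$ (equivalently $\Omega$ of (\ref{eq:Omega}) upstairs) factorizes along the edge-removal fibers into $\exp\omega_K$ of the reduced graph times exactly $q(p_1\pm p_j-q)\,dq$, respectively $q_1q_2(p_1-q_1-q_2)\,dq_1dq_2$, with the precise constants $1$ and $2$ appearing in (\ref{eq:Srecursion}). Your explanations (a) and (b) for those constants are heuristic, and they cannot be obtained by a naive continuum limit of the lattice recursion either: since $N_{g,n}(\bp)$ vanishes unless $\sum_i p_i$ is even and only $q$'s of one parity contribute to each sum, the coefficients $\tfrac12,\tfrac12$ of (\ref{eq:integralrecursion}) do not pass to the coefficients $1,2$ of (\ref{eq:Srecursion}) by rescaling --- this parity bookkeeping is precisely why the paper compares the Euclidean and symplectic theories only after Laplace transform, via (\ref{eq:LTErecursion}) and (\ref{eq:LTSrecursion}). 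In addition, the tadpole Cases 2--3 are left unresolved by your own admission, and even the normalization of the left-hand side, $\int \exp\big(\omega_K(\bp)\big)\,d\phi_1 = p_1\,v^S_{g,n}(\bp)$, needs justification inside the reduction picture, where the moment map is $(p_1^2,\dots,p_n^2)$ and factors $2p_i\,dp_i$ enter the transverse measure; the $\Aut(\Gamma)$ factors must also be tracked as in the discrete proof. To close the gap you would need to carry out the local Liouville-volume computation in each of the six strata (in the spirit of the contraction computation $\iota_{X_0}\omega_K(\bp)\big|_{RG_{g,n}(\bp)}=-2\,d\ell_0$ of Section~\ref{sect:symp}) and then verify the resulting constants independently, for instance by checking that (\ref{eq:Srecursion}) with the initial values (\ref{eq:S03}) and (\ref{eq:S11}) reproduces $v^S_{0,4}$ or $v^S_{1,2}$ in agreement with (\ref{eq:LTSrecursion}) and (\ref{eq:intersection}).
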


The initial values are easy to calculate. For the case
of $(g,n) = (0,3)$, since the perimeter $(p_1,p_2,p_3)\in\bR_+^3$
determines the length of each edge, the symplectic form is
$1$ on a single point. Thus we have 
\begin{equation}
\label{eq:S03}
v_{0,3}^S(p_1,p_2,p_3) = 1.
\end{equation}
The unique trivalent graph of type $(g,n) = (1,1)$ is
given in Figure~\ref{fig:s11}, which has the automorphism
group $\bZ/6\bZ$. The perimeter map is
given by $p=2(\ell_1+\ell_2+\ell_3)$. The restriction of 
$\omega_K(p)$ on $RG_{1,1}(p)$ is $2d\ell_1\wedge d\ell_2$.
Therefore, we have
\begin{equation}
\label{eq:S11}
v_{1,1}^S(p)
= \frac{1}{6}\int_{0\le \ell_1+\ell_2\le\frac{p}{2}}
2d\ell_1 \wedge d\ell_2
=\frac{1}{24}\;p^2.
\end{equation}

\begin{figure}[htb]
\centerline{\epsfig{file=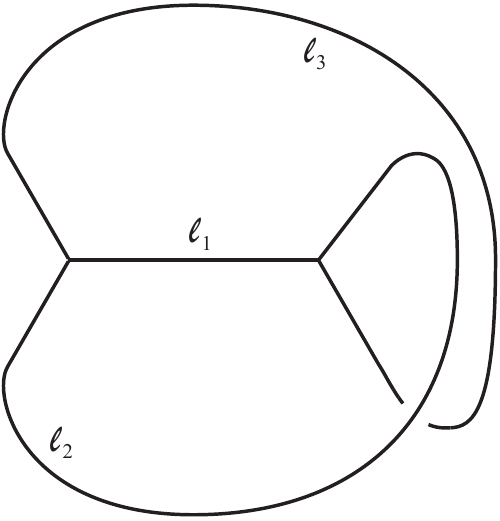, height=1.2in}
\hskip1in\epsfig{file=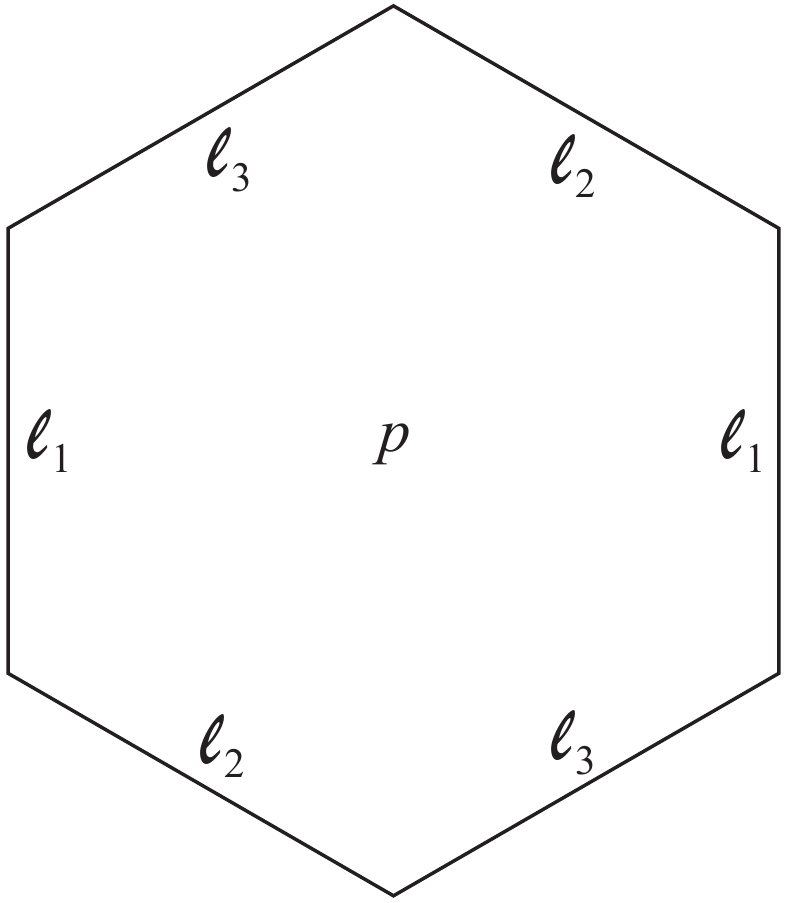, height=1.2in}}
\caption{The trivalent ribbon graph of type $(1,1)$.}
\label{fig:s11}
\end{figure}

 We now consider the Laplace transform of the symplectic 
 volume
 $v_{g,n}^S(\bp)$.
 
 \begin{thm}
 \label{thm:LTK}
The symmetric function $V_{g,n}^S(t_N)$
defined by the Laplace transform
  \begin{equation}
  \label{eq:LTK}
 	V_{g,n}^S(t_1, \dots, t_n)dt_1\tensor \cdots
	\tensor dt_n 
	\overset{\rm{def}}{=} 
	d_1\tensor\cdots\tensor d_n 
	\int_{\bR_+ ^n}
	  v_{g,n}^S(\bp)
	  e^{-\la w,\bp \ra}dp_1\cdots dp_n
	 \end{equation}
and the coordinate change
\begin{equation}
\label{eq:w=2/t}
w_j=-\frac{2}{t_j}
\end{equation}	 
 satisfies the topological recursion 
 \begin{multline}
 \label{eq:LTSrecursion}
V_{g,n}^S(t_N) = 
-\frac{1}{4}\sum_{j=2} ^\infty
\frac{\partial}{\partial t_j}
\left[
\frac{t_j}{t_1^2-t_j^2}
\bigg(
t_1^4V_{g,n-1}^S(w_{N\setminus\{j\}})
-
t_j^4V_{g,n-1}^S(w_{N\setminus\{1\}})
\bigg)
\right]
\\
-
\frac{1}{4}\;t_1^4
 \left(
  V_{g-1,n+1}^S(t_1,t_1, t_{N\setminus\{1\}}) 
  + \sum_{\substack{g_1 + g_2 = g, \\
  {I} \sqcup {J} = N\setminus\{1\}}}  
  V_{g_1,|I|+1}^S(t_1, t_{{I}}) 
  V_{g_2,|J|+1}^S(t_1, t_{{J}}) 
  \right).
\end{multline}
  \end{thm}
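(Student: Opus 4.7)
The plan is to apply the Laplace transform $\int_{\bR_+^n}(\cdot)e^{-\la w,\bp\ra}d^n p$ followed by the $n$-fold differentiation $\partial^n/(\partial w_1\cdots\partial w_n)$ to both sides of the topological recursion (\ref{eq:Srecursion}) of Theorem~\ref{thm:Srecursion}, and then perform the coordinate change $w_j=-2/t_j$. The calculation closely parallels the integer-case derivation carried out in Appendix~\ref{app:LTProof}: the discrete sums $\sum_{q=0}^{\cdot}$ become integrals $\int_0^{\cdot}dq$, the Heaviside function $H$ plays the same role as before, and the rational substitution $w=-2/t$ replaces the fractional linear $e^{-w}=(t+1)/(t-1)$. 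The factor $p_1$ on the left of (\ref{eq:Srecursion}) becomes $-\partial/\partial w_1$ under the transform, and combined with the remaining differentiations and the Jacobian $dw_j=(2/t_j^2)\,dt_j$ this reproduces the left-hand side of (\ref{eq:LTSrecursion}).

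For the three single-integral terms on the right of (\ref{eq:Srecursion}) the essential identity is
\begin{equation*}
\int_0^\infty\!\!\int_0^\infty e^{-w_1 p_1-w_j p_j}\,g(p_1+p_j)\,dp_1\,dp_j=\frac{\widetilde g(w_j)-\widetilde g(w_1)}{w_1-w_j},
\end{equation*}
where $\widetilde g(w)=\int_0^\infty g(P)e^{-wP}dP$. Applied to $g(P)=\int_0^P q(P-q)v_{g,n-1}^S(q,p_{N\setminus\{1,j\}})\,dq$, whose $P$-Laplace transform factors as $w^{-2}\cdot\widetilde{qv}(w,p_{N\setminus\{1,j\}})$ by the convolution theorem, this produces the principal contribution. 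The two Heaviside-supported terms on $[0,|p_1-p_j|]$ transform to analogous difference-quotient expressions, and after assembling all three and applying the remaining $t_j$-differentiation, they collapse into the single clean bracket $\partial_{t_j}\bigl[\frac{t_j}{t_1^2-t_j^2}\bigl(t_1^4V_{g,n-1}^S(t_{N\setminus\{j\}})-t_j^4V_{g,n-1}^S(t_{N\setminus\{1\}})\bigr)\bigr]$ under the substitution $w=-2/t$. The last line of (\ref{eq:Srecursion}) is a triple convolution in $p_1$ of three univariate functions, whose $p_1$-Laplace transform factors as $w_1^{-2}\cdot\widetilde{qv_1}(w_1)\cdot\widetilde{qv_2}(w_1)$, both convolution factors being evaluated at the same $w_1$---accounting for the repeated argument $t_1$ in the $V^S$ terms. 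The kernel $w_1^{-2}=t_1^2/4$ together with the extra $\partial_{w_1}=(t_1^2/2)\partial_{t_1}$ from the $n$-fold differentiation delivers the $-\tfrac14 t_1^4$ prefactor.

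The main obstacle will be the bookkeeping in the second step: verifying that the three single-integral contributions combine, after differentiation and substitution, into the single clean expression displayed above. This is the continuous analog of the algebraic manipulation performed in Appendix~\ref{app:LTProof} for the integer recursion, and the essential simplification here is that the Laplace transform of $q^k$ on $\bR_+$ is the pure monomial $k!/w^{k+1}$; consequently, under $w=-2/t$ all rational expressions in $w$ collapse to pure polynomials in $t$, with no surviving factor analogous to the $(t^2-1)^3/t^2$ of the discrete theory---only its leading coefficient $t^4$ remains, in complete consistency with Theorem~\ref{thm:LTofErecursion} and with the observation in (\ref{eq:wandt}) that the two substitutions agree to leading order at $w=0$.
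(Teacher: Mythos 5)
Your proposal is correct and follows essentially the same route as the paper's own proof in Appendix~\ref{app:LTProof}: Laplace-transform the continuous recursion (\ref{eq:Srecursion}) term by term, using the convolution theorem for the inner $q$-integrals and the difference-quotient identity for functions of $p_1\pm p_j$ (the paper's (\ref{eq:LT1})--(\ref{eq:LT3})), then substitute $w_j=-2/t_j$; your choice to apply $\partial/\partial w_j$ after transforming is just the mirror image of the paper's multiplication by $(-1)^n p_2\cdots p_n$ before transforming. The only imprecision is attributing part of the $t_1^4$ prefactor to ``the extra $\partial_{w_1}$ from the $n$-fold differentiation'': that derivative is already consumed by the factor $p_1$ on the left, and the repeated-argument factors actually come from the explicit weights $q_1q_2$ inside the double integral, exactly as in the paper's computation of $\widehat{f}(w_1,w_1)/w_1^2$.
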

  
  \noindent
The proof of this theorem is given in 
Appendix~\ref{app:LTProof}. 
The very reason that Kontsevich was interested in 
the symplectic volume of the moduli space is that it
gives the generating function of the 
intersection numbers (\ref{eq:tau}) 
\begin{equation}
\label{eq:intersection}
V_{g,n}^S(t_N) =
(-1)^n \sum_{\substack{d_1+\cdots d_n\\=3g-3+n}}
\la \tau_{d_1}\cdots\tau_{d_n}\ra_{g,n}
\prod_{j=1} ^n (2d_j+1)!!\left(\frac{t_j}{2}\right)^{2d_j}.
\end{equation}
The topological recursion (\ref{eq:LTSrecursion})
produces a relation among the coefficients, which is 
known as the DVV formula of \cite{DVV}, and is 
equivalent to the Virasoro constraint condition of
\cite{W1991}.

Since the volume is for $\cM_{g,n}$ and the 
intersection numbers are for $\Mbar_{g,n}$,
it is not obvious why they are the same thing.
From the  deep theory of Mirzakhani \cite{Mir1, Mir2},
it becomes obvious why and how they are 
related.

We are now ready to calculate the Kontsevich constants.

\begin{thm}
\label{thm:KinLT}
The ratio of the two volume polynomials 
$V_{g,n}^S(t_N)$ and $V_{g,n}^E(t_N)$ 
is a
 constant  depending only on $g$ and $n$:
\begin{equation}
\label{eq:rho}
\rho_{g,n}(t) \overset{\rm{def}}{=}
\frac{V_{g,n}^S(t_N)}{V_{g,n}^E(t_N)}
= 2^{5g-5+2n}.
\end{equation}
\end{thm}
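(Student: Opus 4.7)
My plan is to prove the identity by strong induction on the complexity $2g-2+n \geq 1$, using the two topological recursions of Theorems \ref{thm:LTofErecursion} and \ref{thm:LTK} as the engine and only needing to check the two base cases $(0,3)$ and $(1,1)$ by hand.

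For the base cases, I would extract $V_{g,n}^E$ as the homogeneous leading part of the given $\cL_{g,n}$, and compute $V_{g,n}^S$ from \eqref{eq:LTK}. From \eqref{eq:LT03} the constant-degree leading term is $V_{0,3}^E=-1/16$, while the Laplace transform of $v_{0,3}^S=1$ of \eqref{eq:S03} gives $F=1/(w_1w_2w_3)$, so $d_1\tensor d_2\tensor d_3 F = -(w_1w_2w_3)^{-2}\,dw_1\tensor dw_2\tensor dw_3$; substituting $w_j=-2/t_j$ and $dw_j=(2/t_j^2)dt_j$ yields $V_{0,3}^S=-1/8$. Their ratio is $2=2^{5(0)-5+2(3)}$. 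Similarly, from \eqref{eq:LT11} the leading term is $V_{1,1}^E(t)=-t^2/128$, and from \eqref{eq:S11} one finds $F(w)=1/(12w^3)$, which after one $t$-derivative produces $V_{1,1}^S(t)=-t^2/32$; their ratio is $4=2^{5-5+2}$. Both base cases confirm $V_{g,n}^S=\rho_{g,n}V_{g,n}^E$.

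For the inductive step, assume $V_{g',n'}^S=\rho_{g',n'}V_{g',n'}^E$ whenever $2g'-2+n'<2g-2+n$. I substitute this into the right-hand side of \eqref{eq:LTSrecursion} and compare with $\rho_{g,n}$ times \eqref{eq:LTErecursion}. The two recursions have identical structural shape (the same $\partial_{t_j}$-sum with kernel $t_j/(t_1^2-t_j^2)$ and the same $t_1^4$-prefactored genus-reduction and splitting terms); they differ only in the overall numerical coefficients $1/16$ versus $1/4$ in the linear piece and $1/32$ versus $1/4$ in the quadratic piece. The induction therefore reduces to three purely arithmetic identities: $\rho_{g,n}/\rho_{g,n-1}=(1/4)/(1/16)=4$; $\rho_{g,n}/\rho_{g-1,n+1}=(1/4)/(1/32)=8$; and $\rho_{g,n}/(\rho_{g_1,|I|+1}\rho_{g_2,|J|+1})=(1/4)/(1/32)=8$. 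Using $\rho_{g,n}=2^{5g-5+2n}$ together with $g_1+g_2=g$ and $|I|+|J|=n-1$ in the last case, one verifies these as $2^2$, $2^3$, and $2^3$ respectively.

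The induction then forces $V_{g,n}^S(t_N)=2^{5g-5+2n}\,V_{g,n}^E(t_N)$ as Laurent polynomials, which is precisely \eqref{eq:rho}. I expect the main obstacle to be only bookkeeping: making sure that each occurrence of a smaller $V^S$ in \eqref{eq:LTSrecursion} is replaced by the correct $\rho$-factor times $V^E$, and that the partitioning $g=g_1+g_2$, $n-1=|I|+|J|$ yields the stated multiplicative relation among the $\rho$'s. No analytical input beyond the two recursions and the initial data is needed, so the conceptual content lies entirely in the coincidence that the constants $\{1/16,1/32\}$ and $\{1/4,1/4\}$ stand in precisely the ratios dictated by $\rho_{g,n}=2^{5g-5+2n}$.
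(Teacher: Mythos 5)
Your proposal is correct and follows essentially the same route as the paper's proof: induction on $2g-2+n$ with base cases $(0,3)$ and $(1,1)$, using the observation that the recursions (\ref{eq:LTErecursion}) and (\ref{eq:LTSrecursion}) differ only in their constant prefactors, whose ratios $2^2$ and $2^3$ match the jumps of $2^{5g-5+2n}$ under $(g,n)\mapsto(g,n-1)$, $(g-1,n+1)$, and the stable splittings. Your explicit verification of the initial values $V_{0,3}^S=-\tfrac18$, $V_{1,1}^S=-\tfrac{t^2}{32}$ and of the three arithmetic identities is exactly the content of the paper's argument, just spelled out in more detail.
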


\begin{proof}
We use induction on $2g-2+n$. From (\ref{eq:S03}), 
(\ref{eq:S11}), (\ref{eq:L03}) and (\ref{eq:cL11}), we have
\begin{equation}
\label{eq:initial}
\begin{cases}
V_{0,3}^E(t_1,t_2,t_3)=-\frac{1}{16}\\
V_{0,3}^S(t_1,t_2,t_3)=-\frac{1}{8}
\end{cases}
\qquad{\text{and}}\qquad
\begin{cases}
V_{1,1}^E(t)=-\frac{1}{128}\; t^2\\
V_{1,1}^S(t)=-\frac{1}{32}\;t^2
\end{cases}.
\end{equation}
Thus the initial values satisfy (\ref{eq:rho}). 
We observe that the recursion formulas
(\ref{eq:LTErecursion}) and
 (\ref{eq:LTSrecursion}) are the same except for the 
 constant factors on the first and the second lines of the right-hand
 side.
Therefore, 
if we changed $V_{g,n}^E(t_N)$ to $2^{5g-5+2n}\cdot
V_{g,n}^E(t_N)$ in (\ref{eq:LTErecursion}), then its
recursion formula would
become identical to (\ref{eq:LTSrecursion}).
Since the recursion uniquely determines all values for 
$(g,n)$ subject to $2g-2+n>0$ from the initial
values (\ref{eq:initial}), we establish
(\ref{eq:rho}). This completes the proof.
\end{proof}

\section{The Eynard-Orantin theory on $\bP^1$}
\label{sect:EO}

The number of integral ribbon graphs $N_{g,n}(\bp)$ is
a difficult function to deal with because it is not given by
a single formula. As we have noted, it behaves like a polynomial
for large $\bp\in\bZ_+^n$, while it takes value $0$ whenever
$p_1+\cdots+p_n$ is odd. Compared to this, the 
Laplace transformed function such as $\cL_{g,n}(t_N)$
is a far nicer object. Indeed $\cL_{g,n}(t_N)$ is a 
Laurent polynomial and satisfies a simple differential 
recursion formula (\ref{eq:LTrecursion}). We also note that
 the recursion formulas 
(\ref{eq:LTrecursion}), 
(\ref{eq:LTErecursion}),
and (\ref{eq:LTSrecursion})
take a very similar shape. 
Over the years several authors (including
\cite{BKMP, BM, DV2007, E2007, EMS, EO1, EO2, 
K1992, M,
MZ,OP1,Z1,Z2})
have noticed 
 that many different combinatorial structures
(on the A-model side of a topological string theory)
can be uniformly treated on the B-model side, after
taking the Laplace transform. The importance of the 
Laplace transform as the mirror map was 
noted in \cite{EMS}.
This uniform structure after the Laplace 
transform is the 
manifestation of the Eynard-Orantin theory.
We will show in this section that the recursions
(\ref{eq:LTrecursion}), 
(\ref{eq:LTErecursion}),
and (\ref{eq:LTSrecursion})
become identical under the formalism proposed in 
\cite{EO1}.

We are not in the place to formally present the
Eynard-Orantin formalism in an axiomatic way.
Instead of giving the full account, we are satisfied with
explaining
a limited case when the \emph{spectral curve}
of the theory is $\bP^1$. 
The word ``spectral curve'' was used in \cite{EO1}
 because of  the analogy of the
spectral curves appearing in the Lax formalism of
 integrable systems.

We start with the spectral curve $C=\bP^1\setminus S$,
where $S\subset \bP^1$ is a finite set.
We also need two generic elements $x$ and $y$
of $H^0(C,\cO_C)$, where $\cO_C$ denotes the sheaf of 
holomorphic functions on $C$. The condition we impose 
on $x$ and $y$ is
that the holomorphic maps
\begin{equation}
\label{eq:x,y}
x:C\longrightarrow \bC \qquad {\text{and}}
\qquad 
y:C\longrightarrow \bC 
\end{equation}
have only simple ramification points, i.e., their derivatives 
$dx$ and $dy$ have simple zeros, and that
\begin{equation}
\label{eq:(x,y)}
(x,y):C\owns t\longmapsto \big(x(t),y(t)\big)\in \bC^2
\end{equation}
is an immersion. Let $\Lambda^1(C)$ denote the
sheaf of meromorphic $1$-forms on $C$, and
\begin{equation}
\label{eq:Hn}
H^n = H^0\big(C^n,\Sym^n(\Lambda^1(C))\big)
\end{equation}
the space of meromorphic symmetric differentials of
degree $n$. The Cauchy differentiation kernel is an example
of such differentials:
\begin{equation}
\label{eq:W02}
W_{0,2}(t_1,t_2) = \frac{dt_1\tensor dt_2}{(t_1-t_2)^2}
\in H^2.
\end{equation}
In the literatures starting from \cite{EO1},
 the Cauchy differentiation kernel 
 has been called the
 \emph{Bergman kernel},
 even thought it has nothing to do with 
 the Bergman kernel in complex analysis. 
A bilinear operator
\begin{equation}
\label{eq:K}
K:H\tensor H\longrightarrow H
\end{equation}
naturally extends to 
\begin{multline*}
K:H^{n_1+1}\tensor H^{n_2+1}
\owns (f_0,f_1,\dots,f_{n_1})
\tensor
 (h_0,h_1,\dots,h_{n_2})
 \\
\longmapsto
(K(f_0,h_0), f_1,\dots,f_{n_1},h_1,\dots,h_{n_2})
\in
H^{n_1+n_2+1}
\end{multline*}
$$
K:H^{n+1}
\owns (f_0,f_1,\dots,f_{n_1})
\longmapsto 
(K(f_0,f_1), f_2,\dots,f_{n_1})
\in 
H^n.
$$
Suppose we are given an infinite sequence 
 $\{W_{g,n}\}$ of differentials $W_{g,n}\in H^n$
 for all $(g,n)$ subject to the stability condition $2g-2+n>0$.
 We say \emph{this sequence satisfies a topological  recursion 
 with respect to the kernel} $K$ if
 \begin{equation}
 \label{eq:EOgeneral}
 W_{g,n}= K(W_{g,n-1},W_{0,2})
 + K(W_{g-1,n+1}) 
 +\half
 \sum_{\substack{g_1+g_2=g\\I\sqcup J=N\setminus\{1\}}}
 ^{\text{stable}}
 K\!\left(W_{g_1,|I|+1},W_{g_2,|J|+1}\right).
 \end{equation}
The characteristic of the Eynard-Orantin theory lies in the
particular choice of the \emph{Eynard kernel} that 
reflects the parametrization (\ref{eq:(x,y)}) and the 
ramified coverings (\ref{eq:x,y}).
Let $\cA=\{a_1,\dots,a_r\}\subset C$ be 
the set of simple ramification 
points of the $x$-projection map.
Since locally at each $a_\lam$ the 
$x$-projection is a double-sheeted covering,  we can choose the
deck transformation map 
\begin{equation}
\label{eq:slam}
s_\lam :U_\lam\overset{\sim}{\longrightarrow}U_\lam,
  \end{equation}
where $U_\lam\subset C$ is an appropriately
chosen simply connected neighborhood of $a_\lam$.

\begin{Def}
\label{def:EynardKernel}
The \emph{Eynard kernel} is the linear map
$H\tensor H\rightarrow H$ defined by
\begin{multline}
\label{eq:EK}
K\big(f_1(t_1)dt_1,f_2(t_2)dt_2\big)
\\
=
\frac{1}{2\pi i}
\sum_{\lam=1} ^r
\oint_{|t-a_\lam|<\epsilon}
K_\lam (t,t_1)\bigg(f_1(t)dt\tensor f_2\big(s_\lam(t)\big)ds_\lam(t)
+
f_2(t)dt\tensor f_1\big(s_\lam(t)\big)ds_\lam(t)
\bigg),
\end{multline}
where
\begin{equation}
\label{eq:Ekernel}
K_\lam(t,t_1) =
\half 
\left(
\int_{t} ^{s_\lam(t)} W_{0,2}(t,t_1)\;dt
\right)
\tensor dt_1\cdot 
 \frac{1}{\bigg(y(t)-y\big(s_\lam(t)\big)\bigg)dx(t)},
\end{equation}
and $\frac{1}{dx(t)}$ is 
the contraction operator with respect to the vector field
$$
\left(\frac{dx}{dt}\right)^{-1}
\frac{\partial}{\partial t}. 
$$
The integration is taken with respect to the
$t$-variable along a small loop around $a_\lam$
that contains no singularities other than $t=a_\lam$. 
A topological recursion with respect to the Eynard kernel 
is what we call the \emph{Eynard-Orantin recursion}
in this paper.
\end{Def}

To convert (\ref{eq:LTrecursion}) to 
the Eynard-Orantin formalism, we need to identify
the spectral curve of the theory and the unstable
case $\cL_{0,2}(t_1,t_2)$. 
The spectral curve is   a plane algebraic curve
\begin{equation}
\label{eq:C}
C = \left\{(x,y)\in \bC^2\;\left|\; xy=
y^2+1\right.\right\},
\end{equation}
which is the same curve considered in \cite{N2}. Here we 
introduce a different parametrization
\begin{equation}
\label{eq:spectral}
\begin{aligned}
x(t)&=\frac{t+1}{t-1}+\frac{t-1}{t+1}= 2+\frac{4}{t^2-1}\\
y(t)&=\frac{t+1}{t-1}
\end{aligned}
\end{equation}
with a parameter $t\in\bP^1\setminus\{1,-1\}$
so that the resulting differentials become Laurent polynomials.
This use of the parametrization is similar to that of
\cite{EMS, MZ}.
The $x$-projection 
\begin{equation}
\label{eq:xprojection}
\pi:C\owns t \longmapsto x(t)
\in  \bC
 \end{equation}
has simple ramification points at $t=0$
and $t=\infty$, since
$$
dx = -\frac{8t}{(t^2-1)^2}\; dt.
$$
We note that since the  map $\pi$
is globally a branched double-sheeted
 covering, its covering transformation 
is globally defined and is given by 
\begin{equation}
\label{eq:s}
s: C\owns t\longmapsto s(t)=-t\in C.
\end{equation}

\begin{figure}[htb]
\centerline{\epsfig{file=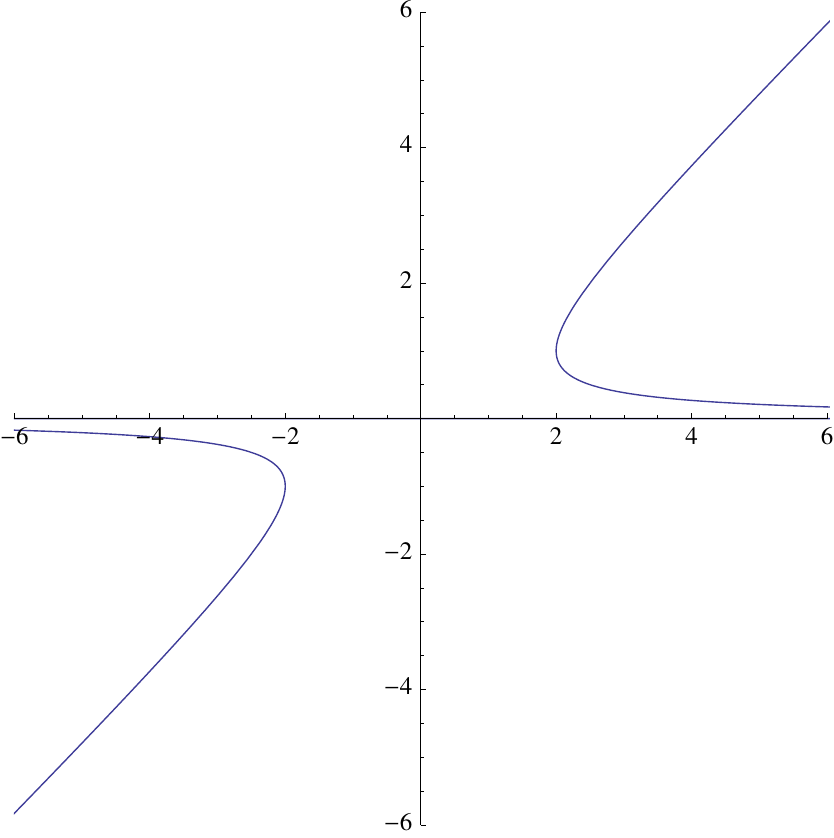, width=1.5in}}
\caption{The spectral curve $x=y+\frac{1}{y}$.}
\label{fig:spectral}
\end{figure}

The unstable $(0,2)$ case
is calculated in 
Appendix~\ref{app:examples}, (\ref{eq:L02}). The result is
$$
\cL_{0,2}(t_1,t_2)dt_1\tensor dt_2
=\frac{dt_1\tensor dt_2}{(t_1+t_2)^2}=
\frac{dt_1\tensor dt_2}{\big(t_1-s(t_2)\big)^2}.
$$
This quadratic differential form plays the role of the
Cauchy differentiation kernel.
For every holomorphic differential $f(t)dt$ on $C$, we have
\begin{multline}
\label{eq:diff-kernel}
-\frac{1}{2\pi i}\oint \frac{1}{dt}
\bigg[
 f\big(s(t)\big)ds(t)
\cL_{0,2}(t,t_1)dt\tensor dt_1
+
f(t)dt\cL_{0,2}\big(s(t),t_1\big) ds(t)\tensor dt_1
\bigg]
\\
=
\left(
\frac{1}{2\pi i}\oint 
\frac{f(-t)}{(t+t_1)^2}dt 
\right)
\tensor dt_1
+
\left(
\frac{1}{2\pi i}\oint 
\frac{f(t)}{(t-t_1)^2}dt 
\right)
\tensor dt_1
=2f'(t_1)dt_1,
\end{multline}
where the operation $\frac{1}{dt}$ is the contraction 
by the vector field $\frac{\partial}{\partial t}$, and the
integration is taken with respect to $t$ along a 
positively oriented simple loop that 
contains both $t_1$ and $s(t_1)$.  Actually, the 
contour integral should be considered as
the residue calculation at $t=\infty$ with respect
to the opposite orientation. This explains the 
minus sign in (\ref{eq:diff-kernel}).

\begin{thm}
\label{thm:EO}
The topological recursion {\rm{(\ref{eq:LTrecursion})}}
is equivalent to the Eynard-Orantin recursion of {\rm{\cite{EO1}:}}
\begin{multline}
\label{eq:EO}
\cL_{g,n}(t_N)dt_N
\\
=
\frac{1}{2\pi i}\int _\Gamma K(t,t_1)
\Bigg[
\sum_{j=2} ^n
\bigg(
\cL_{g,n-1}(t,t_{N\setminus\{1,j\}})dt\tensor 
dt_{N\setminus\{1,j\}}\tensor 
\cL_{0,2}\big(s(t),t_j\big)ds(t)\tensor dt_j
\\
+
\cL_{g,n-1}\big(s(t),t_{N\setminus\{1,j\}})ds(t)\tensor 
dt_{N\setminus\{1,j\}}\tensor 
\cL_{0,2}(t,t_j)dt\tensor dt_j
\bigg)
\\
+
\cL_{g-1,n+1}\big(t,s(t),t_{N\setminus\{1\}}\big)dt
\tensor ds(t)\tensor 
dt_{N\setminus\{1\}}
\\
+
\sum_{\substack{g_1+g_2=g\\I\sqcup J=N\setminus\{1\}}}
^{\rm{stable}}
\bigg(\cL_{g_1,|I|+1}(t,t_I)dt\tensor dt_I\bigg)
\tensor
\bigg(\cL_{g_2,|J|+1}\big(s(t),t_J\big)ds(t)\tensor dt_J\bigg)
\Bigg].
\end{multline}
Here the contour integration 
 is taken with respect to $t$ along a curve 
$\Gamma$ that consists of a large
circle of the negative orientation
centered at the origin with radius $r>\max_{j\in N} |t_j|$,
and  a small circle around the origin
of the positive orientation. 
We use a simplified 
notation  $dt_I = \bigotimes_{i\in I}dt_i$ for $I\subset N$.
\end{thm}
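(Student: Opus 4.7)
The plan is to unwind the Eynard-Orantin residue calculus on the spectral curve (\ref{eq:spectral}) and verify directly that the right-hand side of (\ref{eq:EO}) equals the right-hand side of (\ref{eq:LTrecursion}). First, I would compute the Eynard kernel $K(t,t_1)$ explicitly. With deck transformation $s(t)=-t$, one has
\begin{equation*}
\int_t^{s(t)} W_{0,2}(\,\cdot\,,t_1) = \frac{2t}{t^2-t_1^2}\,dt_1,
\qquad
\bigl(y(t)-y(-t)\bigr)dx(t) = -\frac{32\,t^2}{(t^2-1)^3}\,dt,
\end{equation*}
so that (\ref{eq:Ekernel}) yields
\begin{equation*}
K(t,t_1) = -\frac{1}{32}\cdot\frac{(t^2-1)^3}{t\,(t^2-t_1^2)}\cdot\frac{dt_1}{dt},
\end{equation*}
where the formal $1/dt$ encodes the contraction $1/dx(t)$ and cancels one $dt$ of the integrand in (\ref{eq:EO}). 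Second, I would identify the contour $\Gamma$. Viewing it as the boundary (with reversed orientation) of the annulus $\epsilon<|t|<r$ gives $\tfrac{1}{2\pi i}\oint_\Gamma = \Res_{t=0} + \Res_{t=\infty}$, so the formalism localizes the integrand at the two ramification points of the $x$-projection.

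Third, I would treat the three groups of terms on the right-hand side of (\ref{eq:EO}) and match each one against a term of (\ref{eq:LTrecursion}). The stable terms (the $\cL_{g-1,n+1}$ term and the bilinear sum) have integrands rational in $t$ with no double poles of $\cL_{0,2}$-type; their only finite poles away from $t=0$ are the simple poles of $K$ at $t=\pm t_1$. By the global residue theorem $\Res_{t=0}+\Res_{t=\infty} = -\Res_{t=t_1}-\Res_{t=-t_1}$, and since each $\cL_{g',n'}$ is a Laurent polynomial in the squares of its variables (so $\cL(\pm t_1,\ldots)=\cL(t_1,\ldots)$), the two residues at $\pm t_1$ coincide and together deliver exactly $-\tfrac{1}{32}\tfrac{(t_1^2-1)^3}{t_1^2}$ times the bracketed expressions of the second line of (\ref{eq:LTrecursion}).

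For each summand over $j$, the factor $\cL_{0,2}$ introduces additional double poles via $\cL_{0,2}(s(t),t_j)\,ds(t) = -dt/(t-t_j)^2$ and $\cL_{0,2}(t,t_j)\,dt = dt/(t+t_j)^2$. The contribution of $\Res_{t=\pm t_1}$ produces the combination $\tfrac{1}{(t_1-t_j)^2}+\tfrac{1}{(t_1+t_j)^2}$, which by the identity
\begin{equation*}
\frac{1}{(t_1-t_j)^2}+\frac{1}{(t_1+t_j)^2} = 2\,\frac{\partial}{\partial t_j}\!\left[\frac{t_j}{t_1^2-t_j^2}\right]
\end{equation*}
is a $t_j$-derivative, yielding the $\tfrac{(t_1^2-1)^3}{t_1^2}\cL_{g,n-1}(t_{N\setminus\{j\}})$ half of the first line of (\ref{eq:LTrecursion}). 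The contribution of $\Res_{t=\pm t_j}$ comes from the double poles: setting $G(t):=\tfrac{(t^2-1)^3}{t(t^2-t_1^2)}$, which is odd in $t$, both $\Res_{t=t_j}$ and $\Res_{t=-t_j}$ reduce, via the oddness of $G$ and the evenness of $G'$, to $\tfrac{\partial}{\partial t_j}\bigl[G(t_j)\cL_{g,n-1}(t_j,t_{N\setminus\{1,j\}})\bigr]$; rewriting $G(t_j) = -\tfrac{t_j}{t_1^2-t_j^2}\cdot\tfrac{(t_j^2-1)^3}{t_j^2}$ supplies the remaining $-\tfrac{(t_j^2-1)^3}{t_j^2}\cL_{g,n-1}(t_{N\setminus\{1\}})$ half.

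The main obstacle is the careful sign bookkeeping throughout — tracking $ds(t)=-dt$, the reversed orientation of $\Gamma$, the parity of $G$ under $t\mapsto -t$, and the minus signs in $-\Res$ arising from the residue theorem — together with pinpointing the elementary identity displayed above as the precise mechanism that converts Eynard-Orantin residues at the ramification points into the $\partial/\partial t_j$ appearing in the Laplace-transformed combinatorial recursion. Once these are in place, the three groups of contributions assemble into exactly the right-hand side of (\ref{eq:LTrecursion}), establishing the equivalence.
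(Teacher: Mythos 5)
Your proposal is correct and follows essentially the same route as the paper's proof: the explicit computation of the Eynard kernel on the curve (\ref{eq:spectral}), the identification of the contour integral with residues at the ramification points (equivalently, minus the residues at $t=\pm t_1$ and $t=\pm t_j$), the parity of $\cL_{g,n}$ in the squared variables, and the identity $\frac{1}{(t_1-t_j)^2}+\frac{1}{(t_1+t_j)^2}=2\,\partial_{t_j}\bigl[\tfrac{t_j}{t_1^2-t_j^2}\bigr]$ converting the double-pole contributions into the $\partial/\partial t_j$ terms of (\ref{eq:LTrecursion}). All the displayed formulas and sign conventions check against the paper's computation, so nothing further is needed.
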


\begin{figure}[htb]
\centerline{\epsfig{file=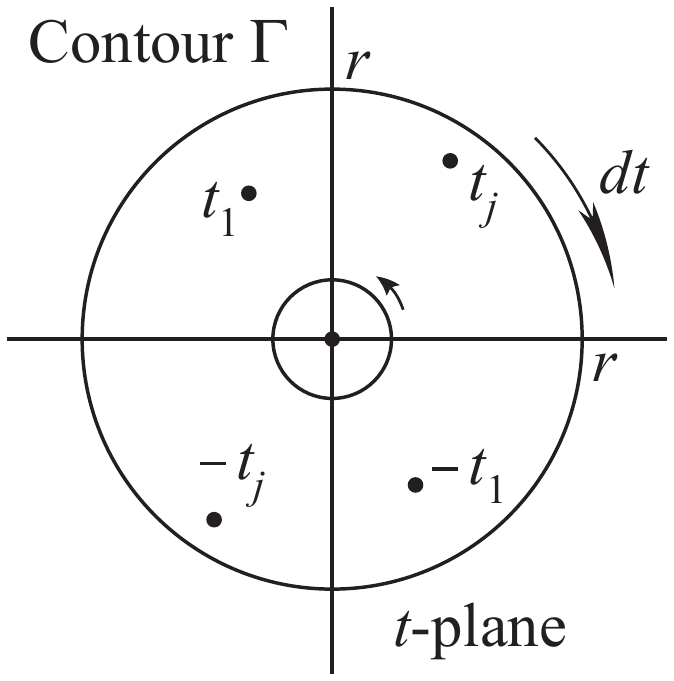, width=1.5in}}
\caption{The integration contour $\Gamma$. This contour 
encloses an annulus bounded by two concentric 
circles centered at the origin. The outer one has a large radius 
$r>\max_{j\in N} |t_j|$ and the negative orientation,
 and the inner one has an infinitesimally small radius with 
 the positive
 orientation.}
\label{fig:contour}
\end{figure}

\begin{rem}
\begin{enumerate}

\item The contour integral (\ref{eq:EO}) 
can be phrased as the sum of the \emph{residues}
of the integrand at the raminfication points of 
the spectral curve $t=0$ and $t=\infty$, which is 
the language used in 
\cite{EO1}.
\item
The first and the second lines of 
the right-hand side of (\ref{eq:EO}) are
unstable $(0,2)$ cases of the fourth line when we have 
$(g_1,I) = (0,\{j\})$ or $(g_2,J) = (0,\{j\})$.

\item
In terms of the Cauchy differentiation kernel $W_{0,2}(t,t_1)$
of (\ref{eq:W02}),
we have
$$
\cL_{0,2}(t,t_1)dt\tensor dt_1 = W_{0,2}(t,t_1)
-\pi^*W_{0,2}(x,x_1)
$$
as proved in Appendix~\ref{app:examples},
(\ref{eq:02=B}).
Since $\pi^*W_{0,2}(x,x_1)$ 
is invariant under the deck transformation
$s:C\rightarrow C$ applied to the entry $x$, 
and since $\cL_{g,n}(t_N)$ is an even function by
Theorem~\ref{thm:LTrecursion}, 
we can replace 
$\cL_{0,2}(t,t_1)dt\tensor dt_1$ with  $W_{0,2}(t,t_1)$
in (\ref{eq:EO}).

\end{enumerate}

\end{rem}

\begin{proof}
The Eynard kernel of our setting  is
\begin{multline*}
K(t,t_1) =
\half 
\left(
\int_{t} ^{s(t)} \cL_{0,2}(t,t_1)\;dt
\right)
\tensor dt_1\cdot 
 \frac{1}{\bigg(y(t)-y\big(s(t)\big)\bigg)dx(t)}
 \\
=
\half 
\left(
\int_{t} ^{s(t)} \frac{dt}{(t+t_1)^2}
\right)\tensor dt_1\cdot
\frac{1}{\left(\frac{t+1}{t-1}-\frac{-t+1}{-t-1}\right)
\frac{\partial}{\partial t}\left(
\frac{t+1}{t-1}+\frac{t-1}{t+1}
\right)}\cdot\frac{1}{dt}
\\
=
-\half
\left(
\frac{1}{t-t_1}+\frac{1}{t+t_1}
\right)\frac{1}{32}\;\frac{(t^2-1)^3}{t^2}
\cdot \frac{1}{dt}\tensor dt_1.
\end{multline*}
Thus 
for any symmetric Laurent polynomial 
$f(t,s)$ in  $t^2$ and $s^2$, we have
$$
\frac{1}{2\pi i}\int_{\Gamma}
 K(t,t_1)f\big(t,s(t)\big)dt\tensor ds(t)
=
-
f(t_1,t_1)\;\frac{1}{32}\;\frac{(t_1^2-1)^3}{t_1^2}\;
dt_1,
$$
since $s(t) = -t$. Therefore, 
the third and the fourth lines of the right-hand side of
(\ref{eq:EO}) becomes
\begin{multline*}
-\frac{1}{32}\; \frac{(t_1^2-1)^3}{t_1^2}
\Bigg[
\cL_{g-1,n+1}(t_1,t_1,t_{N\setminus\{1\}})
\\
+\sum_{\substack{g_1+g_2=g\\I\sqcup J=N\setminus\{1\}}}
^{\rm{stable}}
\cL_{g_1,|I|+1}(t_1,t_I)\cL_{g_2,|J|+1}(t_1,t_J)
\Bigg]
\tensor dt_N.
\end{multline*}
This is 
because $\cL_{g,n}(t_N)$ for  $(g,n)$ in the stable range is
a Laurent polynomial in $t_1^2,\dots,t_n^2$, hence the only 
simple poles
in the complex $t$-plane within the contour $\Gamma$
 of (\ref{eq:EO}) that appear in the third and fourth
lines are located at $t=t_1$ and $t=-t_1$.

Even though the first and the second lines of 
the right-hand side of (\ref{eq:EO}) are
somewhat a degenerate case of the fourth line as remarked above,
the analysis becomes different because 
$\cL_{0,2}(t,t_j)$
contributes new poles in the $t$-plane. First we note that
\begin{multline}
\label{eq:gn-1}
\cL_{g,n-1}(t,t_{N\setminus\{1,j\}})dt\tensor 
dt_{N\setminus\{1,j\}}\tensor 
\cL_{0,2}\big(s(t),t_j\big)ds(t)\tensor dt_j
\\
+
\cL_{g,n-1}\big(s(t),t_{N\setminus\{1,j\}})ds(t)\tensor 
dt_{N\setminus\{1,j\}}\tensor 
\cL_{0,2}(t,t_j)dt\tensor dt_j
\\
=
-\cL_{g,n-1}(t,t_{N\setminus\{1,j\}})
\left(
\frac{1}{(-t+t_j)^2}+\frac{1}{(t+t_j)^2}
\right)
dt^{\tensor 2} \tensor dt_{N\setminus\{1\}}
\\
=
-2\cL_{g,n-1}(t,t_{N\setminus\{1,j\}})
\frac{\partial}{\partial t_j}\;
\frac{t_j}{t^2-t_j^2}
\;
dt^{\tensor 2} \tensor dt_{N\setminus\{1\}}.
\end{multline}
Apply the operation $\frac{1}{2\pi i}\int_{\Gamma} K(t,t_1)$
to 
(\ref{eq:gn-1})
and collect the residues at $t= t_1$ and $t=-t_1$. 
We then  obtain
$$
-\frac{1}{16}
\;\frac{\partial}{\partial t_j}
\left[
\frac{t_j}{t_1^2-t_j^2}\;
\frac{(t_1^2-1)^3}{t_1^2}\;
\cL_{g,n-1}(t_{N\setminus\{j\}})
\right]
dt_N.
$$
When $t\sim t_j$ or $t\sim -t_j$, we use
(\ref{eq:diff-kernel}) to derive
\begin{multline*}
-\frac{1}{2\pi i}\int_{\Gamma} \frac{t}{t^2-t_1^2}
\;
\frac{1}{32}\;
\frac{(t^2-1)^3}{t^2}\cdot \frac{1}{dt}\tensor dt_1
\\
\tensor (-1)
\cL_{g,n-1}(t,t_{N\setminus\{1,j\}})
\left(
\frac{1}{(-t+t_j)^2}+\frac{1}{(t+t_j)^2}
\right)
dt^{\tensor 2} \tensor dt_{N\setminus\{1\}}
\\
=-\frac{1}{32}\;
\frac{\partial}{\partial t_j}
\left[
\frac{t_j}{t_j^2-t_1^2}\;\frac{(t_j^2-1)^3}{t_j^2}
\cL_{g,n-1}(t_j,t_{N\setminus\{1,j\}})
\right] dt_N
\\
-
\frac{1}{32}\;
\left(
-\frac{\partial}{\partial t_j}
\right)
\left[
\frac{-t_j}{t_j^2-t_1^2}\;\frac{(t_j^2-1)^3}{t_j^2}
\cL_{g,n-1}(-t_j,t_{N\setminus\{1,j\}})
\right] dt_N
\\
=\frac{1}{16}\;
\frac{\partial}{\partial t_j}
\left[
\frac{t_j}{t_1^2-t_j^2}\;\frac{(t_j^2-1)^3}{t_j^2}
\cL_{g,n-1}(t_j,t_{N\setminus\{1,j\}})
\right] dt_N.
\end{multline*}
This completes the proof.
\end{proof}

Here we note that the spectral curve (\ref{eq:spectral}),
and hence the topological recursion theory of our case,
has a non-trivial automorphism. 
It is given by the transformation
\begin{equation}
\label{eq:tto1/t}
t\longmapsto \frac{1}{t},
\end{equation}
which induces an automorphism
\begin{equation}
\label{eq:Cauto}
C\owns (x,y)\longmapsto (-x,-y)\in C
\end{equation}
of the spectral curve.
It interchanges the two
ramification points of Figure~\ref{fig:spectral}. 
Let 
$$
u=\frac{1}{t},\qquad u_j=\frac{1}{t_j} \qquad \text{for }
j=1,2,\dots,n.
$$
Then we have 
$$
\cL_{0,2}(t,t_j)dt\tensor dt_j=\cL_{0,2}(u,u_j)du\tensor du_j,
$$ 
and $ydx = (-y)d(-x)$. It follows that $K(t,t_1) = K(u, u_1)$,
and we have $\bZ/2\bZ$ as the 
automorphism group of the theory. 
Reflecting this automorphism,
the function $\cL_{g,n}(t_N)$ exhibits the following transformation 
property:
\begin{equation}
\label{eq:1/t}
\cL_{g,n}\left(\frac{1}{t_1},\dots,\frac{1}{t_n}\right)
= (-1)^n \cL_{g,n}(t_1,\dots,t_n) t_1^2\cdots t_n^2.
\end{equation}

  The reason that we choose 
$t$ as our preferred parameter rather than 
$1/t$ in (\ref{eq:t}) 
 is to extract the polynomial behavior of the 
 Laplace transform of the Euclidean 
 volume.
As $t\rightarrow \infty$ the spectral curve
$C$ degenerates to a parabola, and 
the theory changes  from  counting the integral
ribbon graphs to  calculating the Euclidean volume, as we
shall see below. 
By the symmetry argument, the $t\rightarrow 0$
limit also deforms $C$ to a parabola. We can see from (\ref{eq:t})
that 
$$
t\rightarrow \infty \qquad
\Longleftrightarrow \qquad
 e^{-w}=\frac{t+1}{t-1}\rightarrow 1\qquad
\Longleftrightarrow \qquad
w\rightarrow 0,
$$
and the $w\rightarrow 0$ behavior of the Laplace transform
represents the Euclidean volume function, as explained in
Section~\ref{sect:Euc}. Even though there is a symmetry
in the $t$-variables, in terms of $w$, we have
$$
t\rightarrow 0 \qquad
\Longleftrightarrow \qquad
 e^{-w}=\frac{t+1}{t-1}\rightarrow -1,
 $$
and this limit does not correspond to bringing the mesh of the
lattice to $0$.

By restricting (\ref{eq:EO}) to the
top degree terms using
$$
\frac{(t^2-1)^3}{t^2} = t^4-3t^2+3-\frac{1}{t^2},
$$
 we obtain the recursion for the Euclidean volume.

\begin{thm}
\label{thm:EvolumeEO}
Define the Eynard kernel for the Euclidean volume 
by
\begin{equation}
\label{eq:KE}
K_E(t,t_1)=
-\half
\left(
\frac{1}{t-t_1}+\frac{1}{t+t_1}
\right)
\frac{1}{32}\; t^4\cdot 
\frac{1}{dt}\tensor dt_1
\end{equation}
on the spectral curve 
$C_E$ defined by the
parametrization
\begin{equation}
\label{eq:asymspectral}
\begin{cases}
x-2=\frac{4}{t^2}\\
y-1= \frac{2}{t}
\end{cases}.
\end{equation}
Then 
the Laplace transformed Euclidean volume function
$V_{g,n} ^E(t_N)$ satisfies an Eynard-Orantin type
recursion
\begin{multline}
\label{eq:EvolumeEO}
V_{g,n} ^E(t_N)
\\
=-\frac{1}{2\pi i}\oint K_E(t,t_1)
\Bigg[
\sum_{j=2} ^n 
\bigg(
V_{g,n-1}^E(t, t_{N\setminus\{1,j\}})
 dt \tensor dt_{N\setminus\{1,j\}}
 \tensor \frac{ds(t)\tensor dt_j}{\big(s(t)+t_j\big)^2}
 \\
+
V_{g,n-1}^E\big(s(t), t_{N\setminus\{1,j\}}\big)
 ds(t) \tensor dt_{N\setminus\{1,j\}}
 \tensor \frac{dt\tensor dt_j}{(t+t_j)^2}
\bigg)
\\
+
V_{g-1,n+1}^E\big(t, s(t),t_{N\setminus\{1\}})
dt\tensor ds(t)\tensor dt_{N\setminus\{1\}}
\\
+
\sum_{\substack{g_1+g_2=g\\I\sqcup J=N\setminus\{1\}}}
^{\rm{stable}}
\left(
V_{g_1,|I|+1}^E(t,t_I)dt\tensor dt_I
\right)
\tensor
\left(
 V_{g_2,|J|+1}^E\big(s(t),t_J\big)ds(t)\tensor dt_J
 \right)
\Bigg].
\end{multline}
Here the integration contour is a positively oriented 
circle of large radius. 
\end{thm}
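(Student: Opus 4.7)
The plan is to mirror the proof of Theorem~\ref{thm:EO}, substituting the spectral curve $C$ of \eqref{eq:spectral} by its asymptotic degeneration $C_E$ of \eqref{eq:asymspectral}. Starting from the topological recursion \eqref{eq:LTErecursion} already established in Theorem~\ref{thm:LTofErecursion}, I will show that its right-hand side is identical to the contour integral in \eqref{eq:EvolumeEO} by carrying out the residue computation.

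The first step is to verify that $K_E$ defined by \eqref{eq:KE} is the Eynard kernel for $C_E$. With the parametrization \eqref{eq:asymspectral}, the $x$-projection $x=2+4/t^2$ is a double cover with deck transformation $s(t)=-t$, and a direct calculation gives
\begin{equation*}
y(t)-y\bigl(s(t)\bigr)=\frac{4}{t},\qquad
dx=-\frac{8}{t^3}\,dt,\qquad
\int_t^{s(t)}\frac{d\tau}{(\tau-t_1)^2}=\frac{1}{t-t_1}+\frac{1}{t+t_1}.
\end{equation*}
Substituting into \eqref{eq:Ekernel} reproduces \eqref{eq:KE}. The factor $t^4/32$ is precisely the leading asymptotic of $(t^2-1)^3/(32 t^2)$ appearing in the kernel of Theorem~\ref{thm:EO}, consistent with $C_E$ being the $t\to\infty$ limit of $C$.

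The second step is to evaluate the contour integral. By Proposition~\ref{prop:VE}, each $V_{g,n}^E$ in the stable range is a homogeneous polynomial in $t_1^2,\dots,t_n^2$, so the integrand on the third and fourth lines of \eqref{eq:EvolumeEO} is a rational function of $t$ whose only finite poles lie at $t=\pm t_1$. For any symmetric polynomial $f(t,s)$ in $t^2,s^2$, a direct residue computation at $t=\pm t_1$ yields
\begin{equation*}
-\frac{1}{2\pi i}\oint K_E(t,t_1)\,f\bigl(t,s(t)\bigr)\,dt\otimes ds(t)
=-\frac{t_1^4}{32}\,f(t_1,t_1)\,dt_1,
\end{equation*}
which reproduces the second line of \eqref{eq:LTErecursion}. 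For the first two lines of the integrand, the Cauchy-kernel factor $\cL_{0,2}$ contributes additional poles at $t=\pm t_j$; treating these via the analogue of \eqref{eq:diff-kernel} and combining with the residues at $t=\pm t_1$ produces the $\partial/\partial t_j$ sum on the first line of \eqref{eq:LTErecursion}. Both calculations are direct transcriptions of the corresponding steps in the proof of Theorem~\ref{thm:EO}, with the single substitution $(t^2-1)^3/t^2\mapsto t^4$ in the kernel.

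The main difficulty, such as it is, is careful bookkeeping of signs and contour conventions. Because $K_E$ (unlike the Eynard kernel of $C$) has no pole at $t=0$, no small circle around the origin is needed in the contour, and the overall sign in \eqref{eq:EvolumeEO} matches the reversal between the negatively oriented outer circle of $\Gamma$ in Theorem~\ref{thm:EO} and the positively oriented single circle used here. One should also check that the polynomial growth of the integrand at $t=\infty$ produces no residue at infinity, which follows from the parity of the Laurent polynomials involved. Once these conventions are fixed, the argument reduces to routine manipulation.
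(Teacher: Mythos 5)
Your proposal is correct and follows essentially the same route as the paper: it starts from the already-established recursion (\ref{eq:LTErecursion}), verifies that $K_E$ is the kernel produced by the recipe (\ref{eq:Ekernel}) on the degenerate curve $C_E$ with $V_{0,2}^E$ as the $(0,2)$ input, and evaluates the contour integral by residues at $t=\pm t_1$ and $t=\pm t_j$, exactly as in the proof of Theorem~\ref{thm:EO} with $(t^2-1)^3/t^2$ replaced by its leading term $t^4$ and the small circle around $t=0$ dropped since the integrand is now regular there. One minor aside is off but harmless: the residue at infinity is in general \emph{not} zero (oddness of the integrand does not kill the $1/t$-coefficient), and no such vanishing is needed -- the integral over a fixed positively oriented circle of radius $r>\max_{j}|t_j|$ equals the sum of the enclosed finite residues and is independent of $r$ simply because there are no further finite poles.
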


The geometry behind the
recursion formula (\ref{eq:EvolumeEO})
is the following. The Euclidean volume 
is obtained by extracting the asymptotic behavior of 
$\cL_{g,n}(t_N)$ as $t\rightarrow \infty$. 
The parametersization 
$$
\begin{cases}
x=2+\frac{4}{t^2-1}\\
y=1+\frac{2}{t-1}
\end{cases}
$$
of the spectral curve (\ref{eq:spectral}) of
Figure~\ref{fig:spectral} near $t\sim \infty$
gives a neighborhood of
one of the 
critical points $(x,y)=(2,1)$. 
Thus we \emph{define} a new spectral curve $C_E$ by the
parametrization
(\ref{eq:asymspectral}),
which is simply a parabola $x-2 = (y-1)^2$.
The deck-transformation of the $x$-projection of the
parabola $C_E$ is still given by $t\mapsto s(t) = -t$.
The recipe of (\ref{eq:Ekernel}) then gives
(\ref{eq:KE}),
provided that the unstable $(0,2)$ geometry still gives the
same kernel
\begin{equation}
\label{eq:VE02}
V_{0,2} ^E(t_1,t_2) = \frac{dt_1\tensor dt_2}{(t_1+t_2)^2}.
\end{equation}
The continuum limit of (\ref{eq:N02}) is 
$$
v_{0,2} ^E(p_1,p_2) = \frac{1}{p_1}\delta(p_1-p_2).
$$
We thus calculate
\begin{equation}
\label{eq:v02calc}
\left(
\int_0 ^\infty\!\!\int_0 ^\infty
p_1p_2v_{0,2} ^E (p_1,p_2) e^{-(p_1w_1+p_2w_2)}dp_1dp_2
\right)
dw_1\tensor dw_2
=\frac{dw_1\tensor dw_2}{(w_1+w_2)^2}.
\end{equation}
The coordinate change (\ref{eq:t}) near $t\sim \infty$ becomes
$$
e^{-w} = \frac{t+1}{t-1}\longmapsto
1-w = 1+\frac{2}{t},
$$
i.e., $w=-\frac{2}{t}$. Under this change, which is 
an automorphism of $\bP^1$, (\ref{eq:v02calc})
remains the same, and we obtain (\ref{eq:VE02}).
The $x$-projection of the spectral curve $C_E$ defined by
the parametrization (\ref{eq:asymspectral}) now has
only one ramification point at $t=\infty$. Thus the 
integration contour $\Gamma$ of (\ref{eq:EO}) has changed
into a single large circle in (\ref{eq:EvolumeEO}).

The Eynard-Orantin recursion for the symplectic volume is 
given by the choice of the spectral curve $C_S$ parametrized 
by
\begin{equation}
\label{eq:CS}
\begin{cases}
x=\frac{1}{t^2}\\
y=\frac{1}{t}
\end{cases}.
\end{equation}
Since the curve is isomorphic to 
$\bC$, we use the same Cauchy differentiation
kernel $W_{02}(t_1,t_2)$ of (\ref{eq:W02}) in place
of $V_{0,2}^S(t_1,t_2)$.
The Eynard kernel (\ref{eq:Ekernel}) for this case is
\begin{equation}
\label{eq:SE}
K(t,t_1)=
-\half \left(-\frac{1}{s(t)+t_1}+\frac{1}{t+t_1}\right)
\frac{t^4}{4}\cdot \frac{1}{dt}\tensor dt_1.
\end{equation}
Then the  recursion takes 
exactly the same form of (\ref{eq:EvolumeEO}).

\begin{appendix}

\section{Calculation of the Laplace transforms}
\label{app:LTProof}
\setcounter{section}{1}

In this Appendix we prove the Laplace transform formulas
used in the main text.
We first derive the topological recursion for 
\begin{equation}
\label{eq:hatL}
\widehat{L}_{g,n}(w_1,\dots,w_n) =
\sum_{\bp\in\bZ_{\ge 0}^n} p_1p_2\cdots p_n N_{g,n}(\bp) 
e^{-\la p, w\ra}.
\end{equation}
Since we multiply  the number of
integral ribbon graphs by 
$p_1\cdots p_n$, we can allow 
all non-negative integers $p_j$ in the summation,
which makes our calculations simpler.

\begin{prop}
\label{prop:LTinw}
The Laplace transform $\wL_{g,n}(w_N)$ satisfies the following
topological recursion. 
\begin{multline}
\label{eq:LTinw}
\wL_{g,n}(w_N)
\\
=
\sum_{j=2} ^n \frac{\partial}{\partial w_j}
\left[
\left(
\frac{e^{w_1}}{e^{w_1}-e^{w_j}}-
\frac{e^{w_1+w_j}}{e^{w_1+w_j}-1}
\right)
\left(
\frac{
\wL_{g,n-1}(w_{N\setminus\{j\}})}
{(e^{w_1}-e^{-w_1})^2}
-
\frac{
\wL_{g,n-1}(w_{N\setminus\{1\}})}
{(e^{w_j}-e^{-w_j})^2}
\right)
\right]
\\
+
\frac{1}{(e^{w_1}-e^{-w_1})^2}
\Bigg[
\wL_{g-1,n+1}(w_1,w_1,w_{N\setminus\{1\}})
\\
+
\sum_{\substack{g_1+g_2=g\\I\sqcup J=N\setminus\{1\}}}
^{\rm{stable}}
\wL_{g_1,|I|+1}(w_1,w_I)\wL_{g_2,|J|+1}(w_1,w_J)
\Bigg].
\end{multline}
\end{prop}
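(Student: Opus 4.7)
The approach is direct Laplace transformation of the identity \eqref{eq:integralrecursion} from Theorem~\ref{thm:integralrecursion}. Multiply both sides of \eqref{eq:integralrecursion} by $p_2\cdots p_n\, e^{-\la \bp,w\ra}$ and sum over $\bp\in\bZ_{\ge 0}^n$; by the definition \eqref{eq:hatL}, the LHS collapses to $\wL_{g,n}(w_N)$, so the task is to compute the Laplace transform of each of the two blocks on the RHS and to assemble them into the form \eqref{eq:LTinw}.

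For the Case~5--6 block (the $N_{g-1,n+1}$ term together with the stable bilinear product), I would perform the change of variable $s = p_1 - q_1 - q_2 \ge 0$ inside the simplex $\{q_1+q_2 \le p_1\}$, which factors the triple sum over $s,\,q_1,\,q_2$ and the remaining vector $p_{N\setminus\{1\}}$ into three independent pieces. The sum $\sum_s s\,e^{-s w_1}$ produces a geometric kernel, while the sums over $q_1,q_2$ and $p_{N\setminus\{1\}}$ assemble into $\wL_{g-1,n+1}(w_1,w_1,w_{N\setminus\{1\}})$ (and analogously into the stable bilinear product). Invoking the parity constraint $N_{g,n}(\bp)=0$ for $\sum p_i$ odd (the Remark following Theorem~\ref{thm:integralrecursion}), which effectively forces $s$ to be positive and even, upgrades the naive kernel $e^{-w_1}/(1-e^{-w_1})^2$ to the symmetric kernel $1/(e^{w_1}-e^{-w_1})^2$ appearing in \eqref{eq:LTinw}.

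For the Case~1--4 block (the three Heaviside-indexed sums for $j=2,\dots,n$), I would first repackage the combination of the three terms as $\tfrac12\bigl[\phi_j(p_1+p_j)+\widetilde\phi_j(p_1-p_j)\bigr]$, where $\phi_j(r):=\sum_{q=0}^r q(r-q)N_{g,n-1}(q,p_{N\setminus\{1,j\}})$ and $\widetilde\phi_j$ is the odd extension of $\phi_j$ to the integers; this encodes both Heaviside summations through the identity $H(x)\phi_j(x)-H(-x)\phi_j(-x)=\widetilde\phi_j(x)$. Laplace-transforming the two summands over $p_1,p_j$ yields respectively the divided-difference expressions $\tfrac{e^{w_1}\Theta_j(w_j)-e^{w_j}\Theta_j(w_1)}{e^{w_1}-e^{w_j}}$ and $\tfrac{e^{w_1+w_j}\bigl(\Theta_j(w_1)-\Theta_j(w_j)\bigr)}{e^{w_1+w_j}-1}$, where $\Theta_j$ denotes the partial Laplace transform of $\phi_j$. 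Writing the sum as $C_1\,\Theta_j(w_1)+C_2\,\Theta_j(w_j)$, the crucial algebraic observation $C_1+C_2=1$ lets me rewrite it as $\Theta_j(w_1)-C\bigl(\Theta_j(w_1)-\Theta_j(w_j)\bigr)$ with $C=\tfrac{e^{w_1}}{e^{w_1}-e^{w_j}}-\tfrac{e^{w_1+w_j}}{e^{w_1+w_j}-1}$. Applying the operator $-\partial_{w_j}$, which supplies the remaining weight $p_j$, annihilates the $\Theta_j(w_1)$ term and yields the derivative-of-a-product structure of the first line of \eqref{eq:LTinw}.

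The main obstacle is the bookkeeping of the Laplace kernels: the naive convolution sums produce factors of the form $(1-e^{-w})^{-2}$, whereas \eqref{eq:LTinw} records them in the symmetric form $(e^w-e^{-w})^{-2}$. Reconciling the two requires systematic use of the parity constraint on $N_{g,n}$, which forces the Case~5--6 variable $s$ (and analogously the shift variables implicit in Case~1--4) to take a prescribed parity and hence halves the relevant generating function. A secondary technical step is verifying the coefficient identity $C_1+C_2=1$ that produces the clean single-derivative form of the Case~1--4 contribution.
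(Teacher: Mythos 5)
Your proposal is correct and follows the same overall strategy as the paper's Appendix~\ref{app:LTProof}: multiply (\ref{eq:integralrecursion}) by $p_2\cdots p_n$, take the Laplace transform term by term, and use the parity of $N_{g,n}$ to force the shift variables to be even, which is precisely what turns the naive kernels into $(e^{w}-e^{-w})^{-2}$; your Case~5--6 treatment is identical to the paper's substitution $p_1-q_1-q_2=2\ell$. Where you genuinely differ is the organization of the Case~1--4 block: the paper computes the three $j$-indexed Laplace transforms separately, carrying the weight $p_j$ through explicit $p_j$-, $\ell$- and $q$-summations, and only at the end recombines five expressions into the $\partial_{w_j}$-of-a-product form, whereas you package the three terms as $\half\bigl[\phi_j(p_1+p_j)+\widetilde\phi_j(p_1-p_j)\bigr]$ via the odd extension, derive the two divided-difference expressions (both of which I checked), use $C_1+C_2=1$, and apply $-\partial_{w_j}$ last so that the $\Theta_j(w_1)$ term drops out and the derivative-of-a-product structure of \eqref{eq:LTinw} appears automatically; this is a cleaner bookkeeping than the paper's recombination step. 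Two points should be made explicit when you write this up: (i) restricting the right-hand side summation to even-total $\bp$ is justified by the recursion identity itself (for odd-total $\bp$ the right-hand side vanishes because it equals the vanishing left-hand side), not by termwise vanishing --- the individual $j$-blocks, and hence your unrestricted $\Theta_j$, do \emph{not} vanish for odd-total $\bp$, so the parity restriction must be imposed \emph{before} the divided-difference computation, replacing $\Theta_j$ by its even-shift version $\sum_{m\ \mathrm{even}}me^{-mw}\cdot\sum_q qN_{g,n-1}(q,\cdot)e^{-qw}$; (ii) that even restriction gives $\sum_{m\ \mathrm{even}}me^{-mw}=2(e^{w}-e^{-w})^{-2}$, and it is the prefactor $\half$ in (\ref{eq:integralrecursion}) that then yields exactly the kernels of \eqref{eq:LTinw}, so ``halves the generating function'' should be replaced by this precise accounting. (Also, the parity statement you cite appears in the paragraph following (\ref{eq:N=v}), not in the Remark after Theorem~\ref{thm:integralrecursion}.)
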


\begin{proof}
First we multiply both sides of   (\ref{eq:integralrecursion})
by $p_2p_3\cdots p_n$
and compute its Laplace transform. The left-hand side gives
$\wL_{g,n}(w_N)$.

The first line of the right-hand side is
\begin{multline*}
\sum_{j=2} ^n \sum_{\bp\in\bZ_{\ge 0}^n}
\sum_{q=0} ^{p_1+p_j} p_j\;\frac{p_1+p_j-q}{2}
\big[qp_2\cdots\widehat{p_j}\cdots p_n 
N_{g,n-1}(q,p_{N\setminus\{1,j\}})\big]
e^{-\la p,w\ra}
\\
=
\sum_{j=2} ^n \sum_{q=0} ^\infty 
\sum_{p_{N\setminus\{1,j\}}\in\bZ_{\ge 0} ^{n-2}}
\big[qp_2\cdots\widehat{p_j}\cdots p_n 
N_{g,n-1}(q,p_{N\setminus\{1,j\}})\big]
e^{-\la p_{N\setminus\{1,j\}}, w_{N\setminus\{1,j\}}\ra}
e^{-qw_1}
\\
\times
\sum_{\ell = 0} ^\infty
\ell e^{-2\ell w_1} \sum_{p_j=0} ^{q+2\ell} p_j e^{p_j(w_1-w_j)},
\end{multline*}
where the symbol $\widehat{\;\;}$
indicates omission of the variable, and
we set $p_1+p_j-q=2\ell$. Note that $N_{g,n}(p_N)=0$
unless $p_1+\cdots+p_n$ is even. Therefore, in the Laplace
transform we are summing
over all $p_N\in\bZ_{\ge0}^n$ such that $p_1+\cdots+p_n\equiv 0
\mod 2$.
Since 
$N_{g,n-1}(q,p_{N\setminus\{1,j\}})=0$ unless
$q+p_2+\cdots +\widehat{p_j}+\cdots +p_n\equiv 0 \mod 2$, 
only those $p_1, p_j$ and $q$ satisfying
$p_1+p_j-q\equiv 0\mod 2$ contribute in the summation. 
We now calculate from the last factor (the $p_j$-summation)
\begin{multline*}
\sum_{p_j=0} ^{q+2\ell}e^{-qw_1}\ell e^{-2\ell w_1}
 p_j e^{p_j(w_1-w_j)}
=
e^{-qw_1}\ell e^{-2\ell w_1}\frac{\partial}{\partial w_1}\;
\frac{e^{w_j}-e^{w_j} e^{(1+q+2\ell)(w_1-w_j)}}
{e^{w_j}-e^{w_1}}
\\
=
\frac{e^{-qw_1}\ell e^{-2\ell w_1}}{(e^{w_1}-e^{w_j})^2}
\Bigg[
e^{w_1+w_j}+(q+2\ell) e^{2w_1}e^{(q+2\ell)(w_1-w_j)}
-(1+q+2\ell)e^{w_1+w_j}e^{(q+2\ell)(w_1-w_j)}
\Bigg]
\\
=
\frac{1}{(e^{w_1}-e^{w_j})^2}
\Bigg[
e^{w_1+w_j}e^{-qw_1}\ell e^{-2\ell w_1}
 +\ell 
 (q+2\ell) e^{2w_1}e^{-qw_j}e^{-2\ell w_j}
 \\
-\ell(1+q+2\ell)e^{w_1+w_j}e^{-qw_j}e^{-2\ell w_j}
\Bigg]
\end{multline*} 
followed by the $\ell$-summation and then the $q$-summation.
We obtain
\begin{multline*}
=\sum_{j=2}^n 
\Bigg[
\frac{e^{w_1+w_j}}{(e^{w_1}-e^{w_j})^2}
\left(
\frac{\wL_{g,n-1}(w_{N\setminus\{j\}})}{(e^{w_1}-e^{-w_1})^2}
-
\frac{\wL_{g,n-1}(w_{N\setminus\{1\}})}{(e^{w_j}-e^{-w_j})^2}
\right)
\\
-\frac{e^{w_1}}{e^{w_1}-e^{w_j}}\;\frac{\partial}{\partial w_j}
\frac{\wL_{g,n-1}(w_{N\setminus\{1\}})}{(e^{w_j}-e^{-w_j})^2}
\Bigg].
\end{multline*}

The second line of (\ref{eq:integralrecursion}) contributes
\begin{multline*}
\sum_{j=2} ^n 
\sum_{\bp\in\bZ_{\ge 0}^n}
H(p_1-p_j)
\sum_{q=0} ^{p_1-p_j} p_j\;\frac{p_1-p_j-q}{2}
\big[qp_2\cdots\widehat{p_j}\cdots p_n 
N_{g,n-1}(q,p_{N\setminus\{1,j\}})\big]
e^{-\la p,w\ra}
\\
=
\sum_{j=2} ^n 
\sum_{\ell=0} ^\infty \ell e^{-2\ell w_1}
\sum_{p_j=0} ^\infty
p_j  e^{-p_j(w_1+w_j)}
\\
\times
\sum_{q=0} ^\infty e^{-qw_1}
\sum_{p_{N\setminus\{1,j\}}\in\bZ_{\ge 0} ^{n-2}}
\big[qp_2\cdots\widehat{p_j}\cdots p_n 
N_{g,n-1}(q,p_{N\setminus\{1,j\}})\big]
e^{-\la p_{N\setminus\{1,j\}},w_{N\setminus\{1,j\}}\ra}
\\
=
\sum_{j=2} ^n
\frac{e^{w_1+w_j}}{(1-e^{w_1+w_j})^2}\;
\frac{\wL_{g,n-1}(w_{N\setminus\{j\}})}
{(e^{w_1}-e^{-w_1})^2}.
\end{multline*}
In this calculation we set $p_1-p_j-q = 2\ell$.
Similarly, after putting 
$p_j-p_1-q=2\ell$, the third line of 
(\ref{eq:integralrecursion}) yields
\begin{multline*}
-\sum_{j=2} ^n 
\sum_{\bp\in\bZ_{\ge 0}^n}
 H(p_j-p_1)
\sum_{q=0} ^{p_j-p_1} p_j\;\frac{p_j-p_1-q}{2}
\big[qp_2\cdots\widehat{p_j}\cdots p_n 
N_{g,n-1}(q,p_{N\setminus\{1,j\}})\big]
e^{-\la p,w\ra}
\\
=
-\sum_{j=2} ^n 
\sum_{q=0} ^\infty 
\sum_{\ell=0} ^\infty 
\sum_{p_1=0} ^\infty
(p_1+q+2\ell) \ell  e^{-p_1(w_1+w_j)}
 e^{-2\ell w_j} e^{-qw_j}
\\
\times
\sum_{p_{N\setminus\{1,j\}}\in\bZ_{\ge 0} ^{n-2}}
\big[qp_2\cdots\widehat{p_j}\cdots p_n 
N_{g,n-1}(q,p_{N\setminus\{1,j\}})\big]
e^{-\la p_{N\setminus\{1,j\}},w_{N\setminus\{1,j\}}\ra}
\\
=
-\sum_{j=2} ^n
\frac{e^{w_1+w_j}}{(1-e^{w_1+w_j})^2}\;
\frac{\wL_{g,n-1}(w_{N\setminus\{1\}})}
{(e^{w_j}-e^{-w_j})^2}
+
\sum_{j=2} ^n
\frac{e^{w_1}}{e^{w_1}-e^{-w_j}}\;
\frac{\partial}{\partial w_j}\;
\frac{\wL_{g,n-1}(w_{N\setminus\{1\}})}
{(e^{w_j}-e^{-w_j})^2}.
\end{multline*}

Summing all contributions, we obtain
\begin{multline*}
\sum_{j=2}^n 
\Bigg[
\frac{e^{w_1+w_j}}{(e^{w_1}-e^{w_j})^2}
\left(
\frac{\wL_{g,n-1}(w_{N\setminus\{j\}})}{(e^{w_1}-e^{-w_1})^2}
-
\frac{\wL_{g,n-1}(w_{N\setminus\{1\}})}{(e^{w_j}-e^{-w_j})^2}
\right)
\\
-\frac{e^{w_1}}{e^{w_1}-e^{w_j}}\;\frac{\partial}{\partial w_j}
\frac{\wL_{g,n-1}(w_{N\setminus\{1\}})}{(e^{w_j}-e^{-w_j})^2}
\Bigg]
\\
+
\sum_{j=2} ^n
\frac{e^{w_1+w_j}}{(1-e^{w_1+w_j})^2}\;
\frac{\wL_{g,n-1}(w_{N\setminus\{j\}})}
{(e^{w_1}-e^{-w_1})^2}
\\
-\sum_{j=2} ^n
\frac{e^{w_1+w_j}}{(1-e^{w_1+w_j})^2}\;
\frac{\wL_{g,n-1}(w_{N\setminus\{1\}})}
{(e^{w_j}-e^{-w_j})^2}
+
\sum_{j=2} ^n
\frac{e^{w_1}}{e^{w_1}-e^{-w_j}}\;
\frac{\partial}{\partial w_j}\;
\frac{\wL_{g,n-1}(w_{N\setminus\{1\}})}
{(e^{w_j}-e^{-w_j})^2}
\\
=
\sum_{j=2} ^n \frac{\partial}{\partial w_j}
\left[
\left(
\frac{e^{w_1}}{e^{w_1}-e^{w_j}}-
\frac{e^{w_1+w_j}}{e^{w_1+w_j}-1}
\right)
\left(
\frac{
\wL_{g,n-1}(w_{N\setminus\{j\}})}
{(e^{w_1}-e^{-w_1})^2}
-
\frac{
\wL_{g,n-1}(w_{N\setminus\{1\}})}
{(e^{w_j}-e^{-w_j})^2}
\right)
\right].
\end{multline*}

To compute the Laplace transform of the fourth line of 
(\ref{eq:integralrecursion}), we note that
\begin{multline*}
\half \sum_{p_1=0} ^\infty \sum_{0\le q_1+q_2\le p_1}
q_1q_2(p_1-q_1-q_2)e^{-p_1w_1}f(q_1,q_2)
\\
=
\half \sum_{q_1=0}^\infty
\sum_{q_2=0}^\infty
\sum_{\ell=0} ^\infty
2\ell e^{-2\ell w_1}e^{-(q_1+q_2)w_1}q_1q_2f(q_1,q_2)
\\
=
\frac{1}{(e^{w_1}-e^{-w_1})^2}
\widehat{f}(w_1,w_1)
,
\end{multline*}
where we set $p_1-q_1-q_2=2\ell$, and
$$
\widehat{f}(w_1,w_2)=\sum_{q_1=0} ^\infty \sum_{q_2=0}
^\infty q_1q_2f(q_1,q_2) e^{-(q_1 w_1+q_2 w_2)}.
$$ 
The reason that  $p_1-q_1-q_2$ is even comes from
the fact that we are summing over $p_N\in\bZ_{\ge0}^n$
subject to $p_1+\cdots+p_n\equiv 0\mod 2$,
while on the fourth line of (\ref{eq:integralrecursion})
contributions vanish unless $q_1+q_2+p_2+\cdots+p_n
\equiv 0\mod 2$. Therefore, we can 
restrict the summation over those $p_1,q_1$ and
$q_2$ subject to $p_1\equiv q_1+q_2\mod 2$.
The calculation of the Laplace transform then becomes
straightforward, and the contribution is as in (\ref{eq:LTinw}).

To change from the $w$-coordinates to the $t$-coordinates,
we use (\ref{eq:t}) to find
$$
dw_j = \frac{2}{t_j ^2-1}\;dt_j,
\qquad
\frac{\partial}{\partial w_j} 
=\frac{t_j^2-1}{2}\;\frac{\partial}{\partial t_j}.
$$
Each factor changes as follows:
\begin{align*}
\frac{1}{(e^{w_j}-e^{-w_j})^2}
&=
\frac{1}{16}\;\frac{(t_j^2-1)^2}{t_j^2}
\\
\frac{e^{w_1}}{e^{w_1}-e^{w_j}}-
\frac{e^{w_1+w_j}}{e^{w_1+w_j}-1}
&=
\frac{t_j(t_1^2-1)}{t_1^2-t_j^2}
\\
\wL_{g,n}\big( w_1(t),\dots,w_n(t)\big)
&= 
(-1)^n2^{-n}\cL_{g,n}(t_1,\dots,t_n)(t_1^2-1)\cdots (t_n^2-1).
\end{align*}
We can now convert 
(\ref{eq:LTinw}) to (\ref{eq:LTrecursion})
by a straightforward calculation.
\end{proof}

We now prove Theorem~\ref{thm:LTK}.

\begin{thm}
The symmetric function $\wV_{g,n}^S(w_N)$
defined by the Laplace transform
  \begin{equation*}
 	\wV_{g,n}^S(w_1, \dots, w_n)dw_1\tensor \cdots
	\tensor dw_n 
	=
	d_1\tensor\cdots\tensor d_n 
	\int_{\bR_+ ^n}
	  v_{g,n}^S(\bp)
	  e^{-\la w,\bp \ra}dp_1\cdots dp_n
	 \end{equation*}
 satisfies the topological recursion 
 \begin{multline}
 \label{eq:wVrecursion}
\wV_{g,n}^S(w_N) 
= 
-2\sum_{j=2} ^\infty
\frac{\partial}{\partial w_j}
\left[
\frac{w_j}{w_1^2-w_j^2}
\left(
\frac{\wV_{g,n-1}^S(w_{N\setminus\{j\}})}{w_1 ^2}
-
\frac{\wV_{g,n-1}^S(w_{N\setminus\{1\}})}{w_j ^2}
\right)
\right]
\\
-
\frac{2}{w_1^2}
 \left(
  \wV_{g-1,n+1}^S(w_1,w_1, w_{N\setminus\{1\}}) 
  + \sum_{\substack{g_1 + g_2 = g, \\
  {I} \sqcup {J} = N\setminus\{1\}}}  
  \wV_{g_1,|I|+1}^S(w_1, w_{{I}}) 
  \wV_{g_2,|J|+1}^S(w_1, w_{{J}}) 
  \right).
\end{multline}
  \end{thm}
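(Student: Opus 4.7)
The plan is to derive (\ref{eq:wVrecursion}) from the symplectic recursion (\ref{eq:Srecursion}) by a Laplace-transform argument that exactly parallels the proof of Proposition~\ref{prop:LTinw}, but in the continuous setting (integrals in $\bR_+$) rather than the discrete setting (sums over $\bZ_{\ge 0}$). Concretely, I multiply both sides of (\ref{eq:Srecursion}) by $(-1)^n p_2 p_3 \cdots p_n$ and integrate against $e^{-\la w,\bp\ra}$ over $\bp\in\bR_+^n$. Using $\partial_{w_j}\int f(p)\,e^{-pw_j}\,dp = -\int p\,f(p)\,e^{-pw_j}\,dp$ together with the definition (\ref{eq:LTK}), the left-hand side $p_1 v_{g,n}^S(p_N)$ turns directly into $\wV_{g,n}^S(w_N)$.

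For each $j\ge 2$, I process the three integrals in (\ref{eq:Srecursion}) one by one. In the first integral I make the change of variables $2\ell = p_1 + p_j - q$, so that $(q,\ell,p_j)$ vary freely in $\bR_+^3$ with $p_1 = q + 2\ell - p_j \ge 0$; in the second and third I use $2\ell = p_1-p_j-q$ and $2\ell = p_j-p_1-q$ respectively. In every case the $\ell$-integration decouples and yields $\int_0^\infty \ell e^{-2\ell w_1}\,d\ell = 1/(4w_1^2)$ or $1/(4w_j^2)$, the direct continuous analogs of the geometric-series factors $1/(e^{w_1}-e^{-w_1})^2$ and $1/(e^{w_j}-e^{-w_j})^2$ appearing in Proposition~\ref{prop:LTinw}. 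The remaining $p_j$- (or $p_1$-) integration produces rational factors of the form $1/(w_1\pm w_j)^k$ and derivatives thereof, playing the role of $1/(e^{w_1}-e^{w_j})$ and $1/(e^{w_1+w_j}-1)$ in the discrete proof. Summing the three contributions and rearranging exactly as in the last display of the proof of Proposition~\ref{prop:LTinw}, the spurious terms telescope and one obtains
\[
-2\,\frac{\partial}{\partial w_j}\!\left[\frac{w_j}{w_1^2-w_j^2}\!\left(\frac{\wV_{g,n-1}^S(w_{N\setminus\{j\}})}{w_1^2}-\frac{\wV_{g,n-1}^S(w_{N\setminus\{1\}})}{w_j^2}\right)\right],
\]
which is the $j$-th term on the right of (\ref{eq:wVrecursion}).

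For the last block of (\ref{eq:Srecursion}) I use the substitution $2\ell = p_1 - q_1 - q_2$, which decouples the $\ell$-integral from the $(q_1,q_2)$-integration. The $\ell$-integral yields $\int_0^\infty 2\ell\,e^{-2\ell w_1}\,d\ell = 1/(2w_1^2)$, while the remaining $q_1,q_2$ integrations evaluate the two $\wV$-factors at the common point $w_1$. Combined with the overall $(-1)^n$ collected from the $p_i$-integrations and the factor $2$ out front of (\ref{eq:Srecursion}), this produces precisely
\[
-\frac{2}{w_1^2}\!\left(\wV_{g-1,n+1}^S(w_1,w_1,w_{N\setminus\{1\}})+\!\!\!\!\sum_{\substack{g_1+g_2=g\\ I\sqcup J=N\setminus\{1\}}}^{\mathrm{stable}}\!\!\!\!\wV_{g_1,|I|+1}^S(w_1,w_I)\wV_{g_2,|J|+1}^S(w_1,w_J)\right),
\]
matching the last line of (\ref{eq:wVrecursion}). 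Convergence of all improper integrals for $\mathrm{Re}(w_j)>0$ is automatic because $v_{g,n}^S(\bp)$ is polynomial in $\bp$, as recalled in Section~\ref{sect:symp}.

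The main (and essentially the only) obstacle is the careful bookkeeping of signs and of the numerical factor $-2$ in front of the two sums on the right of (\ref{eq:wVrecursion}). The $(-1)^n$ coming from the derivatives in the definition of $\wV_{g,n}^S$, the $(-1)^{n-1}$ coming from expressing $\int q G_{g,n-1}(q,\cdots) e^{-q w_1}\,dq$ in terms of $\wV_{g,n-1}^S$, the factor $\half$ or $2$ present explicitly in the various lines of (\ref{eq:Srecursion}), and the values of the $\ell$-integrals (namely $1/(4w^2)$ or $1/(2w^2)$) must all be tracked. Once this is done the calculation is mechanically simpler than the discrete one in Proposition~\ref{prop:LTinw} because there is no parity constraint on $p_1+\cdots+p_n$, no geometric summation, and no subsequent change of coordinates $w\leftrightarrow t$ to be carried through.
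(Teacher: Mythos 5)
Your proposal takes essentially the same route as the paper's own proof in Appendix~\ref{app:LTProof}: multiply (\ref{eq:Srecursion}) by $(-1)^n p_2\cdots p_n$, take the Laplace transform, substitute $\ell$-type variables for $p_1\pm p_j-q$ and $p_1-q_1-q_2$ (the paper uses $\ell$ rather than $2\ell$, which avoids the Jacobian factors of $2$ you would otherwise have to track), and recombine the three $j$-terms into a single $\frac{\partial}{\partial w_j}$ expression, with the quadratic terms handled by the same elementary $\ell$-integral giving the $1/w_1^2$ prefactor. The only caution is that in the first integral the constraint $p_j\le q+2\ell$ couples $p_j$ and $\ell$, so the ``decoupling'' you invoke occurs only after the $p_j$-integration is carried out, exactly as in (\ref{eq:LT1}); with that bookkeeping your computation reproduces (\ref{eq:wVrecursion}).
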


\begin{proof}
Since 
$$
\wV_{g,n} ^S(w_N) = (-1)^n 
\int_{\bR_+ ^n}p_1\cdots p_n
	  v_{g,n}^S(\bp)
	  e^{-\la w,\bp \ra}dp_1\cdots dp_n,
$$
we multiply both sides of  (\ref{eq:Srecursion}) by
$(-1)^n p_2\cdots p_n$ 
 and take
the Laplace transform. The left-hand side gives
$\wV_{g,n} ^S(w_N)$.

For a continuous function $f(q)$, by putting 
$p_1+p_j-q=\ell$,  we have
\begin{multline}
\label{eq:LT1}
\int_0 ^\infty \!\!dp_1\int_0 ^\infty \!\!dp_j
\int_0 ^{p_1+p_j}\!\!dq \; p_jq(p_1+p_j-q)f(q)
e^{-(p_1w_1+p_jw_j)}
\\
=
\int_0 ^\infty \!\!dq\int_0 ^\infty \!\!d\ell
\int_0 ^{q+\ell} \!\! dp_j\;q\ell f(q) e^{-qw_1}e^{-\ell w_1}
p_je^{p_j(w_1-w_j)}
\\
=
\frac{1}{(w_1-w_j)^2}
\int_0 ^\infty \!\!dq\int_0 ^\infty \!\!d\ell\;q\ell f(q)
\bigg[e^{-(q+\ell)w_1}-e^{-(q+\ell)w_j}
+
(q+\ell)(w_1-w_j)e^{-(q+\ell)w_j}\bigg]
\\
=
\frac{1}{(w_1-w_j)^2}
\left(
\frac{\widehat{f}(w_1)}{w_1 ^2}
-
\frac{\widehat{f}(w_j)}{w_j ^2}
\right)
-\frac{1}{w_1-w_j}\;\frac{\partial}{\partial w_j}\left(
\frac{\widehat{f}(w_j)}{w_j^2}\right)
\\
=
\frac{\partial}{\partial w_j}
\left[
\frac{1}{w_1-w_j}
\left(
\frac{\widehat{f}(w_1)}{w_1 ^2}
-
\frac{\widehat{f}(w_j)}{w_j ^2}
\right)
\right]
,
\end{multline}
where $\widehat{f}(w)=\int_0 ^\infty qf(q)e^{-qw}dq$.
By setting $p_1-p_j-q=\ell$ we calculate
\begin{multline}
\label{eq:LT2}
\int_0 ^\infty \!\!dp_1\int_0 ^\infty \!\!dp_j H(p_1-p_j)
\int_0 ^{p_1-p_j}\!\!dq \; p_jq(p_1-p_j-q)f(q)
e^{-(p_1w_1+p_jw_j)}
\\
=
\int_0 ^\infty \!\!dq\int_0 ^\infty \!\!d\ell
\int_0 ^{\infty} \!\! dp_j\;q\ell f(q) e^{-qw_1}e^{-\ell w_1}
p_je^{-p_j(w_1+w_j)}
=
\frac{1}{(w_1+w_j)^2}\;
\frac{\widehat{f}(w_1)}{w_1 ^2}, 
\end{multline}
and similarly, 
\begin{multline}
\label{eq:LT3}
-\int_0 ^\infty \!\!dp_1\int_0 ^\infty \!\!dp_j H(p_j-p_1)
\int_0 ^{p_j-p_1}\!\!dq \; p_jq(p_j-p_1-q)f(q)
e^{-(p_1w_1+p_jw_j)}
\\
=
-\int_0 ^\infty \!\!dq\int_0 ^\infty \!\!d\ell\;
\int_0 ^{\infty} \!\! dp_1\;q\ell f(q) e^{-qw_j}e^{-\ell w_j}
(p_1+q+\ell)e^{-p_1(w_1+w_j)}
\\
=
-\int_0 ^\infty \!\!dq\int_0 ^\infty \!\!d\ell\;
q\ell f(q) e^{-qw_j}e^{-\ell w_j}
\left[
\frac{1}{(w_1+w_j)^2}+\frac{q+\ell}{w_1+w_j}
\right]
\\
=
-\frac{1}{(w_1+w_j)^2}\;
\frac{\widehat{f}(w_j)}{w_j ^2}
+
\frac{1}{w_1+w_j}\;\frac{\partial}{\partial w_j}\left(
\frac{\widehat{f}(w_j)}{w_j^2}\right).
\end{multline}
Adding (\ref{eq:LT2}) and (\ref{eq:LT3}) we obtain
\begin{multline*}
\int_0 ^\infty \!\!dp_1\int_0 ^\infty \!\!dp_j H(p_1-p_j)
\int_0 ^{p_1-p_j}\!\!dq \; p_jq(p_1-p_j-q)f(q)
e^{-(p_1w_1+p_jw_j)}
\\
-\int_0 ^\infty \!\!dp_1\int_0 ^\infty \!\!dp_j H(p_j-p_1)
\int_0 ^{p_j-p_1}\!\!dq \; p_jq(p_j-p_1-q)f(q)
e^{-(p_1w_1+p_jw_j)}
\\
=
\frac{1}{(w_1+w_j)^2}
\left(
\frac{\widehat{f}(w_1)}{w_1 ^2}
-
\frac{\widehat{f}(w_j)}{w_j ^2}
\right)
+
\frac{1}{w_1+w_j}\;\frac{\partial}{\partial w_j}\left(
\frac{\widehat{f}(w_j)}{w_j^2}\right)
\\
=
-\frac{\partial}{\partial w_j}
\left[
\frac{1}{w_1+w_j}
\left(
\frac{\widehat{f}(w_1)}{w_1 ^2}
-
\frac{\widehat{f}(w_j)}{w_j ^2}
\right)
\right].
\end{multline*}
The sum of the right-hand sides
of  (\ref{eq:LT1})-(\ref{eq:LT3}) thus becomes
$$
\frac{\partial}{\partial w_j}
\left[
\left(
\frac{1}{w_1-w_j}-\frac{1}{w_1+w_j}
\right)
\left(
\frac{\widehat{f}(w_1)}{w_1 ^2}
-
\frac{\widehat{f}(w_j)}{w_j ^2}
\right)
\right].
$$
Therefore, the first three lines of (\ref{eq:Srecursion})
yield
\begin{equation*}
-2\sum_{j=2} ^\infty
\frac{\partial}{\partial w_j}
\left[
\frac{w_j}{w_1^2-w_j^2}
\left(
\frac{\wV_{g,n-1}^S(w_{N\setminus\{j\}})}{w_1 ^2}
-
\frac{\wV_{g,n-1}^S(w_{N\setminus\{1\}})}{w_j ^2}
\right)
\right].
\end{equation*}
For a continuous function $f(q_1,q_2)$, we have
\begin{multline*}
\int_0^\infty\!\!dp_1
\int\!\!\int_{0\le q_1+q_2\le p_1}
\;
q_1q_2(p_1-q_1-q_2)f(q_1,q_2)e^{-p_1w_1}dq_1dq_2
\\
=
\int_0^\infty \!\!dq_1\int_0^\infty \!\!dq_2\int_0^\infty \!\!d\ell
\; q_1q_2\ell f(q_1,q_2)e^{-\ell w_1}e^{-(q_1+q_2)w_1}
=
\frac{\widehat{f}(w_1,w_1)}{w_1 ^2},
\end{multline*}
where $\widehat{f}(w_1,w_2)=\int_{\bR_+^2}p_1p_2f(p_1,p_2)
e^{-(p_1w_1+p_2w_2)}dp_1dp_2$.
Thus the last two lines of (\ref{eq:Srecursion})
give
$$
-\frac{2}{w_1^2}
 \left(
  \wV_{g-1,n+1}^S(w_1,w_1, w_{N\setminus\{1\}}) 
  + \sum_{\substack{g_1 + g_2 = g, \\
  {I} \sqcup {J} = N\setminus\{1\}}}  
  \wV_{g_1,|I|+1}^S(w_1, w_{{I}}) 
  \wV_{g_2,|J|+1}^S(w_1, w_{{J}}) 
  \right).
$$
This completes the proof of (\ref{eq:wVrecursion}).
\end{proof}

Let us now change the coordinates from $w_j$'s to 
$t_j$'s that are given by
$$
w_j=-\frac{2}{t_j}
$$
this time. 
This change of coordinate gives
$$
dw_j=\frac{2}{t_j^2}dt_j, \qquad \frac{\partial}{\partial w_j}
=\frac{t_j^2}{2}\;\frac{\partial}{\partial t_j}.
$$
 Thus the relation in terms of symmetric differential form is
$$
V_{g,n}^S(t_N) dt_N = \wV_{g,n}^S(w_N) dw_N,
$$
or 
$$
V_{g,n}^S(t_N) =2^n \frac{\wV_{g,n}^S(w_N)}
{t_1^2\cdots t_n^2}.
$$
So we multiply
both sides of (\ref{eq:wVrecursion}) by
 $\frac{2^n}
{t_1^2\cdots t_n^2}$.
From the first term of the first line we obtain
\begin{multline*}
-2\frac{2^n}
{t_1^2\cdots t_n^2}
\sum_{j=2} ^\infty
\frac{\partial}{\partial w_j}
\left[
\frac{w_j}{w_1^2-w_j^2}
\;
\frac{\wV_{g,n-1}^S(w_{N\setminus\{j\}})}{w_1 ^2}
\right]
\\
=
-2\sum_{j=2} ^\infty\frac{\partial}{\partial t_j}
\left[
\half\;\frac{t_1^2\;t_j}{t_1^2-t_j^2}
\;\frac{t_1^2}{4}\;
V_{g,n-1}^S(t_{N\setminus\{j\}})
\right]
=
-\frac{1}{4}\sum_{j=2} ^\infty\frac{\partial}{\partial t_j}
\left[
\frac{t_j}{t_1^2-t_j^2}
\;t_1^4\;
V_{g,n-1}^S(t_{N\setminus\{j\}})
\right].
\end{multline*}
Similarly, the second term of the first line becomes 
\begin{multline*}
2\frac{2^n}
{t_1^2\cdots t_n^2}
\sum_{j=2} ^\infty
\frac{\partial}{\partial w_j}
\left[
\frac{w_j}{w_1^2-w_j^2}
\;
\frac{\wV_{g,n-1}^S(w_{N\setminus\{1\}})}{w_j ^2}
\right]
\\
=
2\sum_{j=2} ^\infty\frac{\partial}{\partial t_j}
\left[
\half\;\frac{t_1^2\;t_j}{t_1^2-t_j^2}
\;\frac{t_j^2}{4}\;
\frac{t_j^2 
V_{g,n-1}^S(t_{N\setminus\{1\}})
}{t_1^2}
\right]
=
\frac{1}{4}\sum_{j=2} ^\infty\frac{\partial}{\partial t_j}
\left[
\frac{t_j}{t_1^2-t_j^2}
\;t_j^4\;
V_{g,n-1}^S(t_{N\setminus\{1\}})
\right].
\end{multline*}
The second line of (\ref{eq:wVrecursion}) is easy to convert.
This completes the proof of Theorem~\ref{thm:LTK}.

\section{Examples}
\label{app:examples}

For $(g,n)=(0,3)$, there are three topological 
shapes of ribbon graphs
listed in Figure~\ref{fig:03}. Cyclic permutations of
$(p_1,p_2,p_3)$ produce different graphs.

\begin{figure}[htb]
\centerline{\epsfig{file=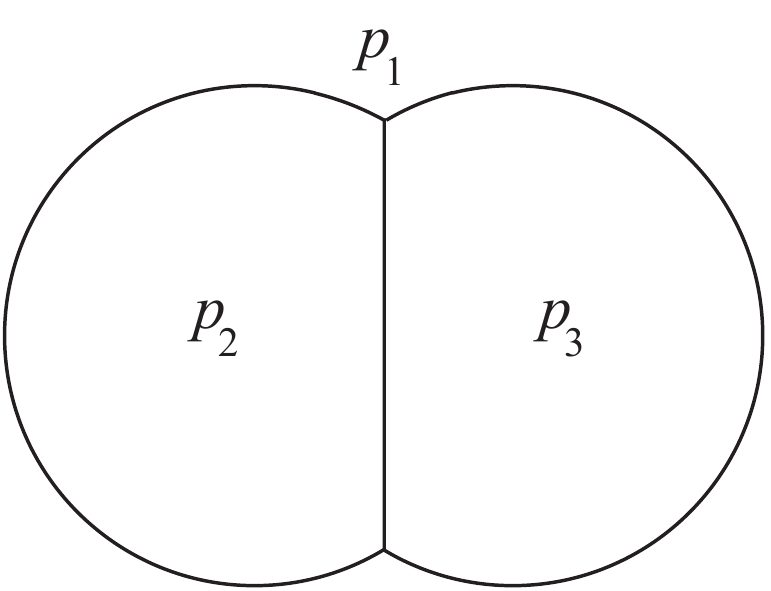, width=1.5in}}
\vskip0.2in
\centerline{
\epsfig{file=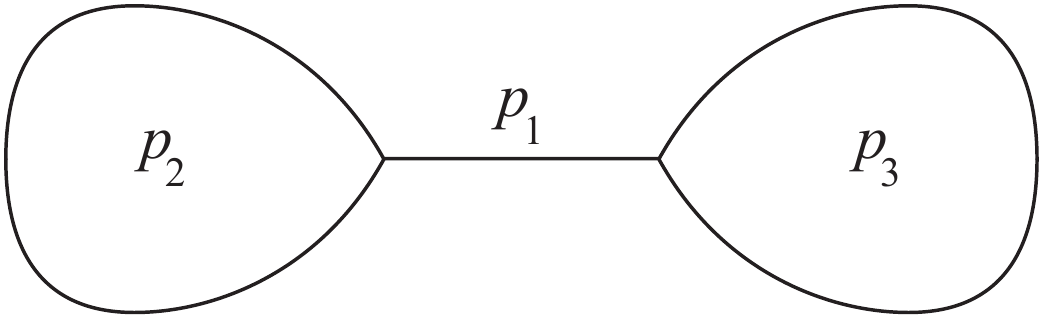, height=0.8in}
\hskip0.2in
\epsfig{file=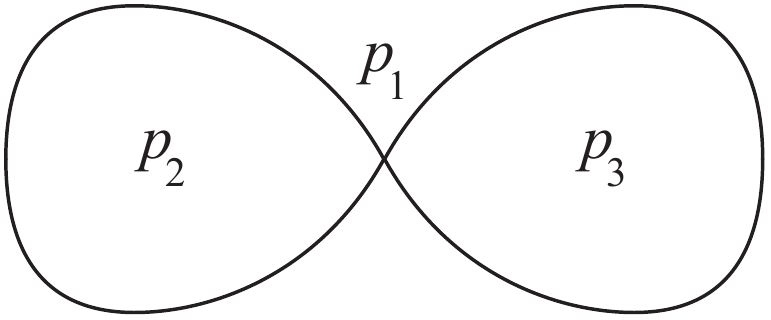, height=0.8in}}
\caption{Three ribbon graphs for $(g,n)=(0,3)$.}
\label{fig:03}
\end{figure}

Which ribbon graph corresponds to a point
$(p_1,p_2,p_3)\in\bZ_+ ^3$ depends on which inequality
these three numbers satisfy. If $p_1>p_2+p_3$, then the 
dumbbell shape (Figure~\ref{fig:03},
bottom left) corresponds to this point. If 
$p_1=p_2+p_3$, then the shape of $\infty$ 
(Figure~\ref{fig:03},
bottom right)
corresponds, 
and if no coordinate is greater than the sum of the other two, 
then the double circle graph (Figure~\ref{fig:03},
top)
corresponds. These inequalities 
divide $\bZ_+ ^3$ into four regions as in Figure~\ref{fig:p03}.

\begin{figure}[htb]
\centerline{\epsfig{file=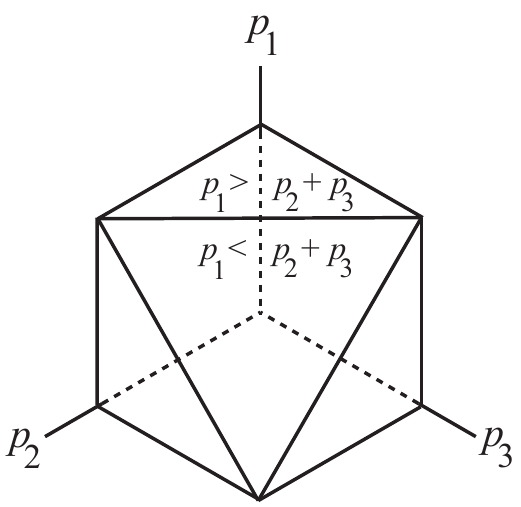, width=1.8in}}
\caption{The partition of $\bZ_+ ^3$.}
\label{fig:p03}
\end{figure}

Thus we conclude 
\begin{equation}
\label{eq:N03}
N_{0,3}(p_1,p_2,p_3)=
\begin{cases}
1\qquad p_1+p_2+p_3\equiv 0\mod 2,\\
0\qquad \text{otherwise.}
\end{cases}
\end{equation}
The even parity condition can be met if all three are even
or only one of them is even. Let us substitute 
$p_j=2q_j$ when it is even and $p_j=2q_j-1$ if it is odd.
Thus the Laplace transform can be calculated by
\begin{multline*}
L_{0,3}(w_1,w_2,w_3) = \sum_{(p_1,p_2,p_3)
\in \bZ_{+}^3} N_{0,3}(p_1,p_2,p_3)
e^{-(p_1w_1+p_2w_2+p_3w_3)}
\\
=
\sum_{(q_1,q_2,q_3)
\in \bZ_{+}^3} 
\bigg(
1
+e^{w_1+w_2}+e^{w_2+w_3}+e^{w_3+w_1}
\bigg)
e^{-2(q_1w_1+q_2w_2+q_3w_3)}
\\
=
\bigg(
1
+e^{w_1+w_2}+e^{w_2+w_3}+e^{w_3+w_1}
\bigg)
\frac{e^{-(w_1+w_2+w_3)}}
{(e^{w_1}-e^{-w_1})(e^{w_2}-e^{-w_2})(e^{w_3}-e^{-w_3})}.
\end{multline*}
Using $e^{-w_j} = \frac{t_j+1}{t_j-1}$, we obtain
\begin{equation}
\label{eq:F03}
L_{0,3}\big(w(t_1),w(t_2),w(t_3)\big)
=
-\frac{1}{16}\;(t_1+1)(t_2+1)(t_3+1)\left(
1+\frac{1}{t_1t_2t_3}
\right)
\end{equation}
and 
\begin{equation}
\label{eq:L03}
\cL_{0,3}(t_1,t_2,t_3)=
\frac{\partial^3}{\partial t_1\partial t_2\partial t_3}
\;L_{0,3}\big(w(t_1),w(t_2),w(t_3)\big)
=
-\frac{1}{16}
\left(
1-\frac{1}{t_1 ^2\;t_2^ 2\;t_3^2}
\right).
\end{equation}

For $(g,n)=(1,1)$, there are two ribbon graphs 
(see Figure~\ref{fig:11}) corresponding to a
hexagonal and a square tiling of the plane. The hexagonal 
tiling gives a ribbon graph on the left, and the
square one on the right 
is a degeneration  obtained by shrinking the
horizontal edge to $0$. The automorphism group
is $\bZ/6\bZ$ for the degree $3$ graph, and $\bZ/4\bZ$
for the degree $4$ graph.

\begin{figure}[htb]
\centerline{\epsfig{file=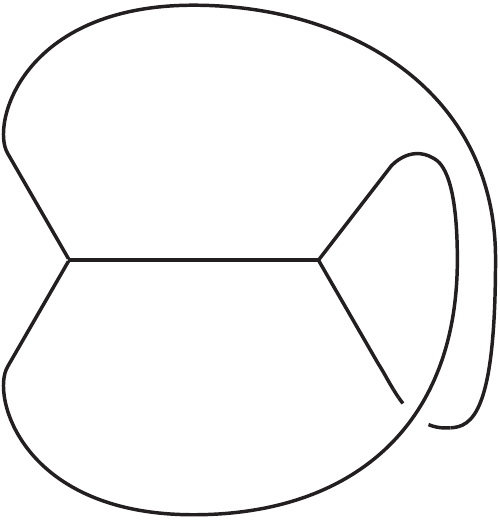, height=1in}
\hskip1in
\epsfig{file=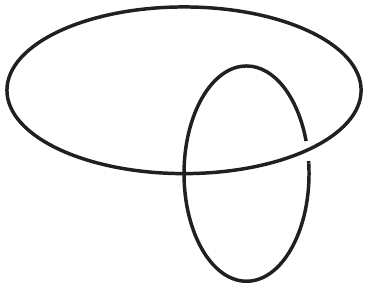, height=0.8in}
}
\caption{Two ribbon graphs of type $(1,1)$.}
\label{fig:11}
\end{figure}

The number of integral ribbon graphs in this case is
the number of partitions of the half of the
 given perimeter length $p=2q\in 2\bZ_+$ into two or three
 positive integers corresponding to edge lengths. 
 Taking the automorphism factors into account, we 
 calculate 
 $$
 N_{1,1}(2q)
 = \frac{1}{4} (q-1)+\frac{1}{6} \sum_{r=1} ^{q-1}(r-1)
 =\frac{1}{12}\;(q^2-1).
 $$
 Therefore,
\begin{equation}
\label{eq:N11}
N_{1,1}(p)=
\begin{cases}
\frac{1}{48}\;(p^2-4)\qquad p\equiv 0\mod 2,\\
0\qquad \text{otherwise.}
\end{cases}
\end{equation}
The Laplace transform can be calculated immediately:
$$
L_{1,1}(w) = \sum_{p=2} ^\infty N_{1,1}(p)e^{-pw}
=
\frac{1}{12}\sum_{q=1} ^\infty (q^2-1)e^{-2qw}
=\frac{3e^{2w}-1}{12(e^{2w}-1)^3}.
$$
We thus obtain
\begin{equation}
\begin{aligned}
\label{eq:F11}
L_{1,1}\big(w(t)\big)&=-\frac{1}{384}\;
\frac{(t+1)^4}{t^2}\;\left(t-4+\frac{1}{t}\right)
\end{aligned}
\end{equation}
and
\begin{equation}
\label{eq:cL11}
\cL_{1,1}(t)=-\frac{1}{2^7}\;\frac{(t^2-1)^3}{t^4}.
\end{equation}

The values of $(g,n)$ corresponding to genus $0$
unstable geometries
$(0,1), (0,2)$ play an
important role in topological recursion. We have seen this
phenomena 
in Hurwitz theory \cite{EMS, MZ}. Let us
 consider the unstable $(0,2)$ case for the integral
ribbon graph counting. Although we have restricted our
ribbon graphs to have vertices of degree $3$ or more, 
it is indeed more consistent to allow vertices of degree $2$.
Actually, a \emph{metric} ribbon graph
with integer edge lengths is a plain ribbon graph whose 
vertices have  degree $2$ or more. For such a ribbon graph,
we assign length $1$ to every edge. We recover a
metric ribbon graph with integer edge length by
disregarding all
vertices of degree $2$. This point of view is natural 
when we assign a Belyi morphism \cite{Belyi, MP1998, SL}
to a ribbon graph.

\begin{figure}[htb]
\centerline{\epsfig{file=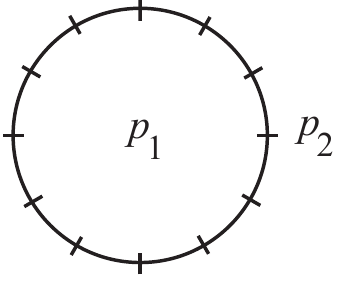, width=1.2in}}
\caption{A ribbon graph of type $(0,2)$.}
\label{fig:02}
\end{figure}

Once we allow degree $2$ vertices, there is only one kind of
ribbon graph of type $(0,2)$, which is a circle (Figure~\ref{fig:02}).
If the graph has $p$ edges, then the automorphism group of
this graph is $\bZ/p\bZ$. Therefore, we have
\begin{equation}
\label{eq:N02}
N_{0,2}(p_1,p_2)=\frac{1}{p_1}\delta_{p_1p_2},
\end{equation}
and its Laplace transform becomes
$$
L_{0,2}(w_1,w_2) = \sum_{p_1=1} ^\infty \sum_{p_2=1} ^\infty
N_{0,2}(p_1,p_2) e^{-(p_1w_1+p_2w_2)}
=-\log(1-e^{-w_1-w_2}).
$$
In terms of the $t$-coordinates we have
\begin{equation}
\label{eq:F02}
L_{0,2}\big(w(t_1),w(t_2)\big) = 
-\log\left(1-\frac{t_1+1}{t_1-1}\cdot 
\frac{t_2+1}{t_2-1}\right),
\end{equation}
which gives 
\begin{equation}
\label{eq:L02}
\cL_{0,2}(t_1,t_2)=\frac{1}{(t_1+t_2)^2}.
\end{equation}
Note that (\ref{eq:L02}) is not a Laurent polynomial and
exhibits
  an exception to the general
statement of Theorem~\ref{thm:LTrecursion}.

The parametrization of the spectral curve (\ref{eq:spectral})
defines the $x$-projection map
\begin{equation*}
\pi:\bP^1 \owns t \longmapsto x=\frac{t+1}{t-1}+\frac{t-1}{t+1}
\in\bP^1.
\end{equation*} 
We find that  the difference
of the Cauchy differentiation kernels of the curve upstairs and
downstairs is 
$\cL_{0,2}(t_1,t_2)$:
\begin{equation}
\label{eq:02=B}
\cL_{0,2}(t_1,t_2)dt_1 \tensor dt_2=
\frac{dt_1 \tensor dt_2}{(t_1+t_2)^2}
=
\frac{dt_1\tensor dt_2}{(t_1-t_2)^2}-
\pi^*\frac{dx_1\tensor dx_2}{(x_1-x_2)^2},
\end{equation}
where $\pi$ is the $x$-projection map  (\ref{eq:xprojection}).
We note that this situation is exactly the same as the
Hurwitz theory \cite[Remark 4.5]{EMS}.

The other genus $0$ unstable case $(0,1)$ is 
important because it identifies the embedding of 
the spectral curve (\ref{eq:spectral}) in $\bC^2$.
It is also subtle
 because we
need to allow degree $1$ vertices. 
Since all possible trees can
be included if we allow degree $1$ vertices, 
we have to make
a choice as to what kind of trees are allowed. 
For example,
 we could allow arbitrary trees as in the 
Hurwitz theory  \cite{OP1}. In the 
current 
integral ribbon graph case, we need to make a more restrictive choice.
Since this topic has no direct relevance to the 
main theorems of this paper, it will be 
treated elsewhere.

Using the recursion formula (\ref{eq:LTrecursion}), 
we can calculate $\cL_{g,n}(t_N)$ systematically. 
A few examples are listed below.
\begin{equation}
\label{eq:L04}
\cL_{0,4}(t_1,t_2,t_3,t_4)
=
\frac{1}{2^8} 
\Bigg[
3
\sum_{j=1} ^4 t_j^2
-9 
-\sum_{i<j}\frac{1}{t_i^2 \;t_j^2}
-\frac{9}{t_1^2\;t_2^2\;t_3^2\;t_4^2}
+\frac{3}{t_1^2\;t_2^2\;t_3^2\;t_4^2}
\sum_{j=1} ^4\frac{1}{t_j^2}
   \Bigg].
\end{equation}
\begin{multline}
\label{eq:L12}
\cL_{1,2}(t_1,t_2)
=
\frac{1}{2^{11}}
\Bigg[
5
\left(
t_1^4+t_2^4
\right)
+3t_1^2\;t_2^2
-18
\left(
t_1^2+t_2^2
\right)
+27 
-4
\left(
\frac{1}{t_1^2}+\frac{1}{t_2^2}
\right)
\\
+\frac{27}{t_1^2\;t_2^2}
-\frac{18}{t_1^2\;t_2^2}
\left(
\frac{1}{t_1^2}+\frac{1}{t_2^2}
\right)
+\frac{3}{t_1^4\;t_2^4}
+\frac{5}{t_1^2\;t_2^2}
\left(
\frac{1}{t_1^4}+\frac{1}{t_2^4}
\right)
\Bigg].
\end{multline}
\begin{equation}
\label{eq:L21}
\cL_{2,1}(t)
=
-\frac{21}{2^{19}}\;
\frac{(t^2-1)^7}{t^{8}}
\left(
5\; t^2+ 6 +\frac{5}{t^2}
\right).
\end{equation}
\begin{equation}
\label{eq:L31}
\cL_{3,1}(t)=-\frac{11}{2^{30}}\;\frac{(t^2-1)^{11}}{t^{12}}
\left(
2275\;t^4+4004\; t^2+4722 +\frac{4004}{t^2}+\frac{2275}{t^4}
\right).
\end{equation}

\end{appendix}

%Bibliography

\providecommand{\bysame}{\leavevmode\hbox to3em{\hrulefill}\thinspace}

\bibliographystyle{amsplain}

\end{document}